\numberwithin{equation}{section}
\newcommand{\Radu}{{\mathrm{Rad}_u}}
\font\sb = cmbx8 scaled \magstep0
\font\sn = cmssi8 scaled \magstep0
\long\def\comdima#1{\ifdraft{\bf #1 }\else\ignorespaces\fi}
\long\def\combarak#1{\ifdraft{\sb #1 }\else\ignorespaces\fi}
\long\def\commargin#1{\ifdraft{\marginpar{\it #1}}\else\ignorespaces\fi}
\newcommand\qu{quasiunipotent}
\newcommand\A{a}
\newcommand\B{b}
\newcommand\pu{\mathbf{p}_U}
\newcommand{\vf}{{\bf{f}}}
\newcommand{\vl}{{\bf{l}}}
\newcommand{\vh}{{\bf{h}}}
\newcommand{\dt}{{\mathcal D}_t}
\newcommand\mr{M_{m,n}}
\newcommand\pt{\Phi_t}
\newcommand\amr{$Y\in M_{m,n}
$}
\newcommand\ba{badly approximable}
\newcommand\da{Diophantine approximation}
\newcommand\di{Diophantine}
\newcommand\hs{homogeneous space}
\newcommand\ehs{expanding horospherical subgroup}
\newcommand{\Ker}{\operatorname{Ker}}
\newcommand\ua{\hat a}
\newcommand\oa{\check a}
\newcommand\ut{\hat t}
\newif\ifdraft\drafttrue
\newcommand\name[1]{\label{#1}{\ifdraft{\sn [#1]}\else\ignorespaces\fi}}
\newcommand\bname[1]{{\ifdraft{\sn [#1]}\else\ignorespaces\fi}}
\newcommand{\Id}{{\operatorname{Id}}}
\newcommand\eq[2]{{\ifdraft{\ \tt [#1]}\else\ignorespaces\fi}\begin{equation}\label{eq:#1}{#2}\end{equation}}
\newcommand {\equ}[1]     {\eqref{eq:#1}}
\newcommand{\under}[2]{\underset{\text{#1}}{#2}}
\newcommand{\goth}[1]{{\mathfrak{#1}}}
\newcommand\g{\goth g}
\newcommand\h{\goth h}
\newcommand{\Q}{{\mathbb {Q}}}
\newcommand{\vr}{{\bf r}}
\newcommand{\vs}{{\bf s}}
\newcommand{\BA}{{\bold{Bad}}}
\newcommand{\rank}{{\mathrm{rank}}}
\newcommand{\R}{{\mathbb{R}}}
\newcommand{\Z}{{\mathbb{Z}}}
\newcommand{\C}{{\mathbb{C}}}
\newcommand{\N}{{\mathbb{N}}}
\newcommand{\cl}{\overline}
\newcommand{\vv}{{\bf{v}}}
\newcommand{\Ad}{{\operatorname{Ad}}}
\newcommand{\ad}{{\operatorname{ad}}}
\newcommand{\GL}{\operatorname{GL}}
\newcommand{\SL}{\operatorname{SL}}
\newcommand{\ggm}{G/\Gamma}
\newcommand{\Lie}{\operatorname{Lie}}
\newcommand{\diag}{{\rm diag}}
\newcommand{\End}{{\rm End}}
\newcommand {\ignore}[1]  {}
\newcommand{\spa}{{\rm span}}
\newcommand{\dist}{{\rm dist}}
\newcommand{\diam}{{\rm diam}}
\newcommand{\df}{{\, \stackrel{\mathrm{def}}{=}\, }}
\newcommand{\FF}{{\mathcal{F}}}
\newcommand{\x}{{\mathbf{x}}}
\newcommand{\vu}{{\bf u}}
\newcommand{\vw}{{\bf w}}
\newcommand{\p}{{\bf p}}
\newcommand{\vp}{{\bf p}}
\newcommand{\vq}{{\bf q}}
\newcommand{\til}{\widetilde}
\newcommand{\sm}{\smallsetminus}
\newcommand{\vre}{\varepsilon}
\newcommand\hd{Hausdorff dimension}
\newcommand\nz{\smallsetminus \{0\}}
\newcommand{\ay}{{\bf{A}_\infty}}
\newcommand\td{tessellation domain}
\newcommand\tn{tessellation}
\newcommand{\ca}{{\mathcal A}}
\newcommand{\cd}{{\mathcal D}}
\newtheorem{thm}{Theorem}[section]
\newtheorem{lem}[thm]{Lemma}
\newtheorem{prop}[thm]{Proposition}
\newtheorem{cor}[thm]{Corollary}
\title[Modified Schmidt games]{Modified Schmidt games 
 and a conjecture of Margulis}
\author{Dmitry Kleinbock}
\address{Brandeis University, Waltham MA
02454-9110 {\tt kleinboc@brandeis.edu}}
\author{Barak Weiss}
\address{Ben Gurion University, Be'er Sheva, Israel 84105
{\tt barakw@math.bgu.ac.il}}
\date{
January 2010}
\begin{document}

 \begin{abstract} We prove a conjecture of G.A.~Margulis on the abundance of certain 
exceptional orbits of partially hyperbolic flows on \hs s by utilizing a theory
of modified Schmidt games, which
 are modifications of $(\alpha,\beta)$-games introduced by \linebreak 
W.~Schmidt in mid-1960s.
\end{abstract}

\maketitle
\section{Introduction}
Let $X$ be a separable metric space and $F$ a group or semigroup
acting on $X$ continuously. Given $Z \subset X$, denote
by $E(F,Z)$ the set of points of $X$ with $F$-orbits staying away from
$Z$, that is, 
$$
E(F,Z) \df \{x \in X : \overline{Fx}\cap
Z = \varnothing \}\,.
$$
If $Z$ consists of a single point $z$, we will write $
E(F,z)$ instead of 
$E(F,\{z\})$.
Such sets are clearly null with respect to any $F$-ergodic measure of full support.
Similarly, denote by $E(F,\infty)$ the set of 
points of $X$ with bounded $F$-orbits;  it is also a null
set with respect to a measure as above if 
$X$ is not compact. However in many `chaotic' situations those sets can be quite big, see
\cite{Do} and \cite{K} for history and references. 

\smallskip

In this paper we take $X = \ggm$, where $G$ is a 
Lie group and $\Gamma$ a lattice in $G$, and consider
\eq{f}{F = \{g_t: t\in\R\}\subset G} acting on $X$ by left translations. 
One of the major developments in the theory of homogeneous flows is a series of
celebrated results of Ratner 
(see the
surveys \cite{Ratner - ICM94, handbook} and a book \cite{Morris})
verifying  conjectures of Raghunatan, 
Dani and Margulis on orbit closures and 
invariant measures of unipotent flows.
Ratner's theorems imply that  for quasiunipotent
subgroups ($F$ is {\sl quasiunipotent\/} if all the eigenvalues of $\Ad(g_1)$ are of 
modulus $1$, and  {\sl non-quasiunipotent\/}  otherwise), the sets $E(F,Z)$ are 
countable unions of submanifolds admitting an explicit algebraic
description, see \cite{Starkov}.

In his 1990 ICM plenary
 lecture, just before Ratner's announcement of
her results, Margulis formulated a list of conjectures
on rigidity properties of unipotent flows. In the same talk  
Margulis conjectured that such rigid behavior is absent if $F$ is 
non-quasiunipotent, providing a list of `non-rigidity conjectures for non-unipotent
flows'. For example, he conjectured \cite[Conjecture (A)]{Ma} that $E(F,\infty)$ 
is a {\sl thick\/} set; that is, its intersection with any open subset of
$X$ has full \hd. In fact, to avoid obvious counterexamples, 
one should assume that the flow $(\ggm,F)$  is  {\sl absolutely
non-\qu\/}, that is, it has no factors $(G'/\Gamma',F')$ 
(i.e.,  homomorphisms $p:G\to G'$ with $F' = p(F)$ and $\Gamma' = p(\Gamma)$ a lattice in $G'$)
such that $F'$ is  nontrivial and \qu. In 1996 Margulis and the
second-named author \cite{KM} proved Conjecture (A) of Margulis under
this hypothesis.   

\ignore{  say that 
 $(G/\Gamma,F)$ {\sl has property\/} (Q) if for any connected
normal subgroup $N\subset G$ with the quotient map $p:G\to G'\df G/N$,
for which $G'$ is semisimple without
compact factors and $p(\Gamma)$ is an irreducible lattice in $G'$,
 at least one of the following
three conditions is satisfied:

\begin{itemize}
\item[(Q1)] 
 $p(\Gamma)$ is cocompact in $G'$;

\item[(Q2)]  Ad$\,p(F)$ is relatively compact;

\item[(Q3)] $p(F)$ is not \qu.

\end{itemize}

It is proved in \cite{KM} that $E(F,\infty)$  is  {\it thick\/} in $\ggm$ if and only if 
$(G/\Gamma,F)$ has property (Q).

\begin{itemize}
\item[($*$)] $G$ is a connected semisimple Lie group without compact
factors, $\Gamma$ is an irreducible lattice in $G$, and $F$  is not {\it \qu\/}, that is,
Ad$\,g_1$ has an eigenvalue with modulus different from $1$. 
\end{itemize}
Then  
$E(F,\infty)$  is  {\it thick\/}, that is, it 
  has full \hd\ at any point of $\ggm$. This was conjectured by G.A.\,Margulis
 in his 1990 ICM address \cite[Conjecture (A)]{Ma} and proved by Margulis and the second-named author \cite{KM}
in 1996. \commargin{We should say later what exactly was conjectured and what was
 proved.}}

Note that even earlier, in the mid-1980s, Dani established the thickness 
of  $E(F,\infty)$ 
in the following two 
special cases \cite{Dani-div, Dani-rk1}:
\eq{case1}{\begin{aligned}
G = \SL_k({\R}),\ \Gamma = \SL_k({\Z}),\text{  and }
F = \{g_t\}\,,\\
\text{ where }g_t =
\text{diag}(e^{ct},\dots , e^{ct}, e^{-d t},\dots , e^{-d
t})\end{aligned}}
(here $c,d > 0$ are chosen so that the determinant of $g_t$ is $1$), and
\ignore{\begin{itemize}
\item[($*$)] \cite{Dani-div} $G = \SL_k({\R})$, $\Gamma = \SL_k({\Z})$, and 
$F = \{g_t\}$, where
$$
g_t =
\text{diag}(e^{ct},\dots , e^{ct}, e^{-d t},\dots , e^{-d
t})\,,
$$
and $c,d > 0$ are such that the determinant of $g_t$ is 1;
\item[($**$)]  }
\eq{case2}{\begin{aligned}
\text{$G$ is a connected semisimple Lie group
with }\rank_{\R} (G) =1.\end{aligned}}

In both cases Dani's approach consisted in 
studying the set of points $x\in\ggm$ with bounded one-sided trajectories $\{g_tx : t\ge 0\}$,
and showing that its intersection with any orbit of 
a certain subgroup of $G$ is a
{\sl winning set\/} for a  
game invented by Schmidt \cite{Schmidt games}.  
This property is stronger than thickness
in the sense that a countable intersection of winning sets is also winning; 
see \cite{Dani-games} for a nice survey of Dani's results.

Understanding one-sided trajectories also plays a crucial role in  
the approach of  \cite{KM}, as well as in 
the present paper.
In what follows we will reserve the notation $F$ for one-parameter subgroups as in \equ{f},
and let $F^+ = \{g_t : t \ge 0\}$. 
\ignore{Before stating it we introduce the following 
representation-theoretic property.
Let $G$ be a connected Lie group, $H$ a closed subgroup, and $F = \{g_t : t \ge 0\}$ a one-parameter
subsemigroup of $G$. 
Given a finite-dimensional representation $\tau: G \to \GL(V)$,
define
$$V^< = \left\{v \in V: \tau(g_t)v \to_{t \to \infty} 0 \right\},$$
and say that a subgroup  $H$ of $G$ is {\em $(F, \tau)$-expanding} if 
\eq{eq: expanding condition}{
v \in V, \ \tau(H)v \subset V^< \ \ \implies \ \ v=0, 
}
i.e. for any nonzero $v$ which shrinks under $\tau(g_t)$ we can find
$h \in H$ such that $\tau(h)v$ does not shrink. 
We will say that $H$ is {\em $F$-expanding} if it is $(F,
\rho)$-expanding for every $\rho$ which is an exterior power of the 
adjoint  representation of $G$. \commargin{ before it was: where $\rho$ is the representation of $G$ on the
subspaces of its Lie algebra via the exterior powers...  } An  example of an $F$-expanding subgroup is 
the }
Also throughout the paper we will denote by $H^+$ 
 the
{\sl expanding horospherical subgroup\/} corresponding to $F^+$,
defined by 
\eq{defn of U+}{
H^+ \df \{g \in G: g_{-t} g g_{t} \to_{t \to +\infty} e\}\,.
}
Note that $H^+$ is nontrivial if and only if $F$ is non-quasiunipotent. 
It is a connected simply connected
nilpotent Lie group normalized by $g_t$. Moreover, it admits 
a one-parameter semigroup of {\sl contracting automorphisms\/}
\eq{def phi}{\FF = \{\Phi_t : t > 0\}, \ \ \ \mathrm{where} \ \ 
\Phi_t(g) = g_{-t} g g_{t}
\,.}   

The main step of the proof in \cite{KM} was to show that, in the special case 
when $G$ is a connected semisimple Lie group without compact
factors and  $\Gamma$ is an irreducible lattice in $G$,  
for any $x\in \ggm$ the set 
\eq{efg}{\{h\in H^+ : hx \in  E(F^+,\infty)\}}
is thick. 
In  the present paper we strengthen the above conclusion,
replacing thickness with the winning property for a certain game. 
More precisely, in 
\cite{games} the notion of {\sl
modified Schmidt games\/}   
(to be abbreviated by MSGs)
induced by semigroups of contracting automorphisms
was introduced. These generalize the game devised by Schmidt and share many of its 
appealing properties. In particular, winning sets of these games are thick and  
the countable intersection property also holds. All the relevant definitions
and facts are discussed in  \S \ref{sec:  games}.
%
The following is one of 
our main results:

\begin{thm}\name{thm: C} Let  
$G$ be a connected semisimple centerfree Lie group without compact factors,
$\Gamma$ an irreducible lattice in $G$, $F$ a one-parameter non-\qu \
subgroup of $G$. Then there exists $a' > 0$ such that for any $x\in \ggm$ and $\A > a'$,
the set  \equ{efg} \commargin{Dima: funny, but we don't seem to be able to prove that  $a'$-winning 
implies  $\A$-winning
if  $\A > a'$.}
is an $\A$-winning set for the MSG  
induced by $\FF $ as in \equ{def phi}. 
\end{thm}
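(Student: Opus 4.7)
The plan is to upgrade the thickness statement established in \cite{KM} for the set \equ{efg} to an $\A$-winning statement for the MSG induced by $\FF$. The approach mirrors Dani's treatment of the special cases \equ{case1} and \equ{case2}, but replaces Schmidt's original game with the MSG framework. The advantage of the MSG is that its moves are adapted to the anisotropic contraction of $\Phi_t$ on $H^+$; this is essential in the general semisimple setting, where the eigenvalues of $\Phi_1$ need not be uniform. Concretely, I will construct a strategy by which Alice ensures that the unique point $h_\infty \in \bigcap_k B_k$ in the intersection of Bob's nested balls satisfies $g_t h_\infty x \in K_\infty$ for all $t \ge 0$, for some compact set $K_\infty \subset \ggm$ depending only on the game; this gives $h_\infty x \in E(F^+, \infty)$.

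The main analytic input is a quantitative non-divergence estimate for $F^+$ acting on $H^+$-orbits. For $G$ connected semisimple without compact factors and $\Gamma$ an irreducible lattice, such estimates (originating with Margulis and sharpened by Dani, Eskin--Margulis--Mozes, and Kleinbock--Margulis) produce, for each small $\epsilon > 0$, a compact set $K_\epsilon \subset \ggm$ such that for every $y \in \ggm$, every $s > 0$, and every ball $B \subset H^+$ of suitable scale,
\[
\frac{\mathrm{Haar}\bigl(\{h \in B : g_s h y \notin K_\epsilon\}\bigr)}{\mathrm{Haar}(B)} \le \epsilon.
\]
Because $H^+$ is normalized by $\{g_t\}$, Haar measure on $H^+$ transforms explicitly under $\Phi_t$; pulling back a ball in $H^+$ by $\Phi_{-s}$ therefore preserves the relative form of this bound, which is what makes the estimate directly compatible with the MSG moves induced by $\FF$.

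Alice's strategy is then inductive. At stage $k$ Bob has chosen an MSG-ball $B_k$ associated to a time $t_k$, and Alice must select a sub-ball $A_k \subset B_k$ of MSG-ratio governed by $\A$. Using the non-divergence bound pulled back via $\Phi_{-t_k}$, I identify a \emph{dangerous} subset of $B_k$ of relative Haar measure at most $\epsilon_k$, consisting of those $h$ whose $F^+$-orbit escapes $K_{\epsilon_k}$ during the time window relevant to this stage, and apply a covering argument in the polynomially growing group $H^+$ to find an allowed sub-ball $A_k$ disjoint from it. Choosing the $\epsilon_k$ summable and $K_\infty$ appropriately, the nested intersection then satisfies $g_t h_\infty x \in K_\infty$ for all $t \ge 0$. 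I expect the principal obstacle to be producing a threshold $a'$ independent of $x$ and of Bob's play: this requires a uniform covering estimate for the dangerous set by MSG-balls of the next scale, together with careful matching between the doubling constant for $H^+$-balls and the non-divergence constants, and the countable accumulation of stages must be absorbed without degradation. This is the step most likely to require nontrivial adaptation of the measure-theoretic arguments of \cite{KM} to the game-theoretic setting.
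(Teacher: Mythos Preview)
Your approach is genuinely different from the paper's, and the central step has a real gap. The paper does \emph{not} use quantitative non-divergence measure estimates. Instead it works directly with the representation-theoretic compactness criteria of \S\ref{cc}: $\pi(L)$ is precompact iff the vectors $\rho(g)\mathbf{p}_U$ associated to unipotent radicals of parabolics stay bounded away from zero. The key structural fact (Propositions~\ref{prop: cc, rank1}(3) and~\ref{prop: same parabolic}) is that at any given stage of the game at most $r=\rank_{\Q}(G)$ (respectively $1$ in the rank-one case) of these vectors can be dangerous. Alice then invokes a simple polynomial lemma (Lemma~\ref{lem: polynomial maps}): given at most $r$ polynomial maps of bounded degree on a fixed ball $B'\subset H^+$, there is a sub-ball of uniform size on which each is comparable to its coefficient norm. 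That sub-ball contains an allowed MSG move once $\A>a'$, and the $(F^+,\Gamma)$-expanding property (Lemma~\ref{lem: polys nonzero}) guarantees the projections to $V^>$ are nontrivial and hence grow under $g_t$. No covering or measure argument enters.

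The gap in your sketch is the step ``apply a covering argument to find an allowed sub-ball $A_k$ disjoint from the dangerous set''. A set of arbitrarily small relative Haar measure can still meet \emph{every} translate of $\Phi_\A(D_0)$ inside $B_k$; small measure alone does not produce a good sub-ball against an adversary. You would need the dangerous set to be covered by fewer such translates than the total available, and the non-divergence inequality you quote does not give this. (The reason the KM machinery \emph{can} be pushed to a game is precisely that the bad set is the sublevel set of finitely many polynomial-like functions --- but exploiting that structure brings you essentially to the paper's argument, not to a measure-and-covering one.) Relatedly, ``choosing the $\epsilon_k$ summable'' is a Borel--Cantelli reflex with no role here: at every stage you must exhibit a single valid move, not bound an exceptional set across stages. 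And if $\epsilon_k\to 0$ the compacta $K_{\epsilon_k}$ increase, so $K_\infty=\bigcup_k K_{\epsilon_k}$ is typically not compact and $g_th_\infty x\in K_\infty$ would not yield $h_\infty x\in E(F^+,\infty)$.
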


We also show

\begin{thm}\name{thm: B}
Let $G$, $F$ and  $\Gamma$ be as in Theorem \ref{thm: C}.
Then there exists $a'' > 0$ such that for any $x,z\in\ggm$ and $\A > a''$, the set
$$\big\{h\in H^+ : hx \in  E(F^+,z)\big\}$$
is an $\A$-winning set for the  MSG 
induced by $\FF$ as in \equ{def phi}. 
\end{thm}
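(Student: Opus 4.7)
The plan is to obtain Theorem \ref{thm: B} as a refinement of Theorem \ref{thm: C}, combining its bounded-orbit MSG strategy with two additional avoidance layers --- one for long times $t \geq T_0$, where orbits might drift close to $z$, and one for short times $t \in [0, T_0]$, where orbits might pass exactly through $z$ --- and then merging them via the countable intersection property for MSG-winning sets established in \cite{games}.

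\emph{Long-time avoidance.} By continuity of $t \mapsto g_t hx$, the condition $\inf_{t \geq T_0} d(g_t hx, z) \geq \delta$ is equivalent to the same condition with $t$ ranging over any countable dense $D_{T_0} \subset [T_0, \infty)$. For each $t > 0$, writing $\psi_t(h) := g_t hx = \Phi_{-t}(h) \cdot g_t x$, the automorphism $\Phi_{-t}$ expands $H^+$ by factor $\geq e^{\lambda t}$ for some fixed positive Lyapunov exponent $\lambda$, so $\psi_t^{-1}(B(z,\delta))$ is a disjoint collection of ``bad balls'' in $H^+$ of individual radius $\lesssim \delta e^{-\lambda t}$. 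For $t$ large and $\delta$ small, these are MSG-avoidable by a strategy uniform in $t$. The countable intersection property then yields a winning strategy for
$$ \bigcap_{t \in D_{T_0}} \{h \in H^+ : d(g_t hx, z) \geq \delta\}. $$

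\emph{Short-time avoidance.} The set $\mathcal{O}_{T_0} := \{h \in H^+ : \exists\, t \in [0, T_0],\ g_t hx = z\}$ is contained in the image of the analytic map $t \mapsto h(t)$ defined (when well-defined) by $h(t) x = g_{-t} z$, hence is a proper lower-dimensional analytic subset of $H^+$. Its $\delta'$-thickening is absolutely decaying, so avoiding $\mathcal{O}_{T_0}$ is an MSG-winning condition. Combined with Theorem \ref{thm: C}, which guarantees orbits eventually enter a fixed compact $K \subset \ggm$ and hence enforces uniform Lipschitz control of the flow, any $h$ in the intersection of the three winning sets satisfies $\inf_{t \geq 0} d(g_t hx, z) > 0$, i.e., $hx \in E(F^+, z)$.

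\emph{Main obstacle.} The principal difficulty is choosing a single $a'' > 0$ that accommodates all three winning strategies simultaneously --- the bounded-orbit strategy of Theorem \ref{thm: C}, the long-time avoidance of $B(z, \delta)$, and the short-time avoidance of $\mathcal{O}_{T_0}$ --- so that each is $a$-MSG-winning for every $a > a''$, uniformly in the time parameter $t \in D_{T_0}$. This requires careful bookkeeping of the various constants ($\lambda$, the threshold $a'$ from Theorem \ref{thm: C}, and the absolute-decay rate for $\mathcal{O}_{T_0}$), and in particular rules out pathologies where the number of components of $\psi_t^{-1}(B(z,\delta))$ inside a given ball of $H^+$ grows with $t$ faster than the geometric expansion by $\Phi_{-t}$ can compensate for.
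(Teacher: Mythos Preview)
Your long-time avoidance step has a genuine gap: the individual sets
\[
S_t \df \{h\in H^+ : \dist(g_thx,z)\ge\delta\}
\]
are \emph{not} $a$-winning for any $a$. Their complements are nonempty open subsets of $H^+$ (whenever $z$ lies in the closure of the $H^+$-orbit of $g_tx$, which in the irreducible-lattice setting happens for a dense set of $t$), and Bob is free to choose his initial domain $B_1$ entirely inside such a complement; Alice can then never escape. The countable intersection property (Theorem~\ref{thm: countable general}(a)) only applies to sets that are each $a$-winning for the same $a$, so it cannot be invoked here. Your ``main obstacle'' paragraph worries about the number of components of $\psi_t^{-1}\big(B(z,\delta)\big)$, but the fatal issue is already present with a single component of positive radius. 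The short-time piece and the appeal to Theorem~\ref{thm: C} are both unnecessary complications; the paper's proof uses neither.

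The paper's argument (Theorem~\ref{thm: B more general}) is a direct single-game strategy based on transversality rather than a countable intersection over times. The key observation is that the flow direction $\goth{f}$ is orthogonal to $\goth{h}$, so a small $H$-disk can meet a slightly thickened arc $Z_T^{(\vre/8)}$ of the $F$-orbit of $z$ only in a set of small diameter (Lemma~\ref{transv}). At step $N+k$ Alice looks at $g_{kT}B_{N+k}x$, which is itself a small $H$-disk; by transversality the ``dangerous'' portion has diameter less than the separation between the two candidate sub-domains $D_1,D_2\subset D_0$ guaranteed by \equ{two choices}, so at least one of them is safe. This avoids the trap you fell into: Alice never needs the complement of an open ball to be winning, she only needs to dodge a set whose diameter is controlled \emph{relative to her current scale}.
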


This strengthens the results of \cite{K}, where the above sets 
were shown to be thick. 
%
%
Because of the 
countable intersection property, Theorems \ref{thm: C} and \ref{thm: B} readily
imply the following \commargin{Dima: removed  $X_0$ from the statement to simplify things.}

\begin{cor}\name{cor: D} Let $G$, $\Gamma$ and $F$ 
be as in Theorem \ref{thm: C}, and let $
Z$ be a 
countable subset of $\ggm$. Then for any $x\in\ggm$, 
the set
\eq{efzinftyh}{
\big\{h\in H^+ : hx \in   E(F^+,Z)  \cap E(F^+,\infty) \big\}
}
is 
a winning set for the MSG 
induced by $\FF$. 
\end{cor}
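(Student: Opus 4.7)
The plan is to write the set in \equ{efzinftyh} as a countable intersection of sets already known to be winning, and then invoke the countable intersection property of MSGs recalled in \S\ref{sec:  games}.

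First I would decompose the target set. Since $E(F^+,Z) = \bigcap_{z\in Z} E(F^+,z)$ (from the definition $E(F^+, Z) = \{x : \overline{F^+x}\cap Z = \varnothing\}$ applied pointwise), we can write
\[
\big\{h\in H^+ : hx \in E(F^+,Z) \cap E(F^+,\infty)\big\} = A_\infty \cap \bigcap_{z \in Z} A_z,
\]
where $A_\infty \df \{h \in H^+ : hx \in E(F^+,\infty)\}$ and $A_z \df \{h \in H^+ : hx \in E(F^+,z)\}$.

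Next I would choose the game parameter. Let $a' > 0$ be as in Theorem \ref{thm: C} and $a'' > 0$ as in Theorem \ref{thm: B}, and fix any $\A > \max(a',a'')$. Theorem \ref{thm: C} then guarantees that $A_\infty$ is $\A$-winning for the MSG induced by $\FF$, while Theorem \ref{thm: B}, applied for each $z \in Z$ separately (with the same $x$), guarantees that each $A_z$ is $\A$-winning for the same MSG.

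Finally, since $Z$ is countable, $A_\infty \cap \bigcap_{z\in Z} A_z$ is a countable intersection of $\A$-winning sets for the MSG induced by $\FF$. By the countable intersection property of modified Schmidt games (the direct analogue of Schmidt's original countable intersection theorem, stated in \S\ref{sec:  games}), this intersection is itself $\A$-winning, and in particular winning. There is no significant obstacle here beyond checking that the notion of MSG used in Theorems \ref{thm: C} and \ref{thm: B} is the same one for which the countable intersection theorem applies, and that the parameter $\A$ may be taken uniformly across all $z$ (which is immediate because the thresholds $a',a''$ in the two theorems do not depend on the target points $z$ or on $x$).
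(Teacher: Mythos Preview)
Your proof is correct and follows exactly the same approach as the paper's own proof: set $\A>\max(a',a'')$, invoke Theorems~\ref{thm: C} and~\ref{thm: B} to get each of the (countably many) pieces $\A$-winning, and then apply the countable intersection property (Theorem~\ref{thm: countable general}(a)). If anything, your write-up is slightly more explicit than the paper's in spelling out the decomposition $E(F^+,Z)=\bigcap_{z\in Z}E(F^+,z)$ and noting that $a''$ is uniform in $z$.
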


This extends a result of Dolgopyat
\cite[Corollary 2]{Do} who
established the thickness of sets 
$$\big\{h\in H^+ : hx \in   E(F^+,Z)  \cap E(F^+,\infty) \big\}$$
for countable $Z$ in the case when the $F^+$-action corresponds to a geodesic
flow on the unit tangent bundle to a manifold of constant negative
curvature and finite volume. 

\smallskip
From Corollary \ref{cor: D} we deduce

\begin{thm}\name{cor: F} Let $G$ be a 
Lie group, $\Gamma$ a
lattice in $G$, $F$ a one-parameter absolutely non-\qu \ subgroup of
$G$, and $
Z$ a countable subset of $\ggm$. 
Then the set
\eq{efzinfty}{
\left\{x \in E(F,Z)  \cap E(F,\infty): \ 
\dim(\overline{Fx}) < \dim(G)\right\}
}
is thick.
\end{thm}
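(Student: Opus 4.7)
The plan is to deduce Theorem~\ref{cor: F} from Corollary~\ref{cor: D} by the standard reduction to the semisimple setting, followed by a Fubini-type lifting of thickness through the quotient map.

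By the Levi decomposition of $G$ together with a Borel density argument for $\Gamma$, as carried out in \cite{KM}, the absolutely non-quasiunipotent hypothesis on $(G/\Gamma,F)$ produces a surjective Lie group homomorphism $p: G \to G'$ such that $G'$ is connected, semisimple, centerfree and has no compact factors, $\Gamma' := p(\Gamma)$ is an irreducible lattice in $G'$, and $F' := p(F)$ is a non-quasiunipotent one-parameter subgroup of $G'$; one arranges the construction so that the fibers of $\bar{p}: G/\Gamma \to G'/\Gamma'$ are compact. Let $Z' := p(Z) \subset G'/\Gamma'$, which remains countable.

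Apply Corollary~\ref{cor: D} on $G'/\Gamma'$ to both $F'^+$ and, by symmetry, to the time-reversed semigroup $\{g_{-t}: t \ge 0\}$, for each base point $x' \in G'/\Gamma'$. The resulting winning, hence thick, subsets of $H'^+$ and $H'^-$ combine, via the local product decomposition $G' \simeq H'^- \cdot C \cdot H'^+$ (with $C$ the centralizer of $F'$) and a Fubini argument for thick subsets of a product, to give thickness in $G'/\Gamma'$ of
$$
E' \df E(F', Z') \cap E(F', \infty).
$$
Pulling back, $\bar{p}^{-1}(E')$ is thick in $G/\Gamma$ since $\bar{p}$ is a smooth submersion with compact fibers, and every $x \in \bar{p}^{-1}(E')$ satisfies $\bar{p}(\overline{Fx}) \subseteq \overline{F'x'}$, so $\overline{Fx}$ is bounded and disjoint from $Z$.

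For the dimension condition: since $\overline{F'x'}$ is bounded in $G'/\Gamma'$ and disjoint from $Z'$, which we may assume nonempty (by adjoining an auxiliary point if necessary), and since the $F'$-action is topologically transitive by Mautner--Moore ergodicity combined with full support of the Haar measure, $\overline{F'x'}$ is a proper closed subset with empty interior. This forces $\dim \overline{F'x'} < \dim G'$ in the topological sense, and combined with $\overline{Fx} \subseteq \bar{p}^{-1}(\overline{F'x'})$ and compactness of fibers, we get $\dim \overline{Fx} < \dim G$. The main obstacle I expect is arranging the fibers of $\bar{p}$ to be compact while preserving non-quasiunipotency of $F'$; this is the Lie-theoretic ingredient borrowed from \cite{KM}, and requires successive quotienting by the solvable radical, the center, and the compact factors of the semisimple part of $G$, with absolute non-quasiunipotency needed to ensure that at each stage the image of $F$ does not degenerate to a quasiunipotent subgroup.
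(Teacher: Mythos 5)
Your overall reduction strategy—quotienting to a semisimple centerfree target, applying Corollary~\ref{cor: D} to the two one-sided semigroups, gluing via a Fubini/slicing argument, then lifting through a map with compact fibers—is indeed close to what the paper does in \S\ref{margconj}. But there is a genuine gap in the step that establishes the dimension bound, and it is not a small one: it is precisely the part of the argument the paper devotes \S\ref{entropy} to.

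You assert that because $\overline{F'x'}$ is a proper closed subset of $G'/\Gamma'$ with empty interior, it follows that $\dim\overline{F'x'} < \dim G'$. This implication is false. A proper closed set with empty interior can have full Hausdorff dimension; a fat Cantor set in $\R$ already shows this, and such examples propagate to manifolds. Topological transitivity of the $F'$-action tells you nothing beyond the fact that the orbit closure is a proper subset, which you already know from the orbit avoiding a point. So the dimension inequality does not come for free. The paper instead proves a nontrivial rigidity statement (Proposition~\ref{entr} via Lemma~\ref{lem: relating dim and entropy}): if a compact $g_1$-invariant set $K\subset\ggm$ has $\dim K = \dim G$, then a box-counting estimate along the expanding directions forces the topological entropy of $g_1$ on $K$ to equal the maximal entropy $h_{\max}$ on $\ggm$; the variational principle then yields an invariant measure $\nu$ on $K$ of maximal entropy, which by the entropy rigidity theorem of Einsiedler--Lindenstrauss must be invariant under $H^+$ and (running the argument for $g_1^{-1}$) under $H^-$, hence under the normal Auslander subgroup, hence equal to Haar by Proposition~\ref{prop: since Gamma irreducible}, contradicting $K\ne\ggm$. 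Without something playing the role of this measure-rigidity step, the conclusion $\dim(\overline{Fx}) < \dim(G)$ is unjustified.

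A secondary omission: your reduction presents $G'$ with a single irreducible lattice $\Gamma'$, but the semisimple quotient of a general $(G,\Gamma)$ need not have $\Gamma'$ irreducible. The paper has to split $G$ (after passing to the adjoint form) into almost-direct factors $G_1,\dots,G_\ell$ with $\Gamma_i = \Gamma\cap G_i$ irreducible, apply Proposition~\ref{prop: smalldim irr} on each $G_i/\Gamma_i$, and reassemble using Lemma~\ref{wegmann}. Your sketch should at least flag this case, since absolute non-quasiunipotency only guarantees the projection of $F$ to each factor is either trivial or non-quasiunipotent, and the dimension count must then be carried out factor by factor.
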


Here and hereafter $\dim(X)$ stands for the \hd\ of a metric space $X$.
The above theorem settles Conjecture (B)
made by  Margulis in the aforementioned ICM address \cite{Ma}; in fact, that conjecture was about 
  {\it  finite\/} $Z\subset \ggm$.  
 To reduce Theorem \ref{cor: F} to Corollary \ref{cor: D}, we employ
a standard (see e.g.\ \cite{Dani-div}) reduction from $G$ semisimple to
general $G$,  pass from one-sided to two-sided orbits roughly following the
lines of \cite[\S 1]{KM}, and also use an entropy argument, which
we learned from  
Einsiedler and 
Lindenstrauss, to show that
the condition $\dim(\overline{Fx}) < \dim(G)$ is automatic
as long as $x\in E(F,Z) \cap E(F,\infty)$.

 \medskip

Both Theorem \ref{thm: C} and Theorem \ref{thm: B} will be proved in a
stronger form. Namely, we establish the conclusion of Theorem
\ref{thm: C} with $H^+$ replaced with its certain proper subgroups, so-called 
$(F^+,\Gamma)$-expanding subgroups, introduced and studied in
\S\ref{section: F expanding groups}.\ignore{
\begin{thm}\name{thm: more general} Suppose 
$G$ is a semisimple Lie group with no compact factors, $\Gamma$ is an
irreducible lattice and $F$ is a non \qu \ one-parameter subgroup. Let
$U$ be an $(F^+,\Gamma)$-expanding subgroup of $H^+$ normalized by $F$. 
Then for any $x\in \ggm$,
the set $\{h\in U : hx \in  E(F^+,\infty)\}$ 
is a winning set for the MSG
induced by  $\FF $ as in \equ{def phi}. 
\end{thm}}
We will show that $H^+$ is always $(F^+,\Gamma)$-expanding; thus Theorem \ref{thm: C}
follows from  its stronger version, Theorem \ref{thm: more general}. 
Such a generalization is useful for other applications as well.
Namely, in \S\ref{concl} we apply Theorem \ref{thm: more general} to \da\ with weights, 
strengthening results obtained earlier in  \cite{di, games}.
\ignore{ let $m,n$ be positive integers,
denote by $\mr$ the space of $m\times n$ matrices with real entries and
choose $\vr\in\R^m$ and $\vs\in\R^n$ such that
\eq{def rs}{r_i,s_j > 0\quad\text{and}\quad\sum_{i=1}^m r_i = 1 =
 \sum_{j=1}^n s_j\,.}
One says that  \amr\ is  
 {\sl $(\vr,\vs)$-\ba\/}, denoted by $Y\in\BA(\vr,\vs)$, if
 \eq{def bars}{ \inf_{\vp\in\Z^m,\,\vq\in\Z^n\nz} \max_i |Y_i\vq - p_i|^{1/r_i} \cdot
\max_j|q_j|^{1/s_j}    > 0\,,
}
where $Y_i$, $i = 1,\dots,m$ are rows of $Y$. A dynamical 
interpretation of this property  \cite{Dani-div, K-matrices}  is used to deduce
the following statement from Theorem \ref{thm:
more general}: 

\begin{cor}\name{cor: E} $ \BA(\vr,\vs)$
is winning for the MSG induced by the semigroup of contractions
$\Phi_t : (y_{ij}) \mapsto ( e^{- (r_i + s_j)t }y_{ij})$ of $\mr$. 
 \end{cor}

The case $n=1$ of this corollary is the main result of \cite{games}, which was proved via 
a variation of Schmidt's methods, not using homogeneous dynamics. The fact that the
set $ \BA(\vr,\vs)$ is thick was known before, see \cite{PV-bad} for
the case $n=1$ and \cite[Corollary 4.5]{di} for the general case. 
}
\smallskip

As for Theorem \ref{thm: B}, 
its conclusion in fact holds for any \hs\ $\ggm$, where $G$ is a Lie group and 
 $\Gamma\subset G$ is discrete. Further, we are able to replace $H^+$ by any nontrivial
connected subgroup $H\subset H^+$ normalized by $F$, see Theorem \ref{thm: B more general}
for a more general statement. 
\ignore{

\begin{cor}\name{cor: geodesic}
Let $M$ be a complete  Riemannian
manifold of constant negative curvature, let  $\{z_k\}$ be its countable subset, and let $F$ be the 
group $\R$ acting on the unit tangent bundle $S(M)$ via the geodesic flow.  Then for any
$y\in M$, the set of directions
$$
\left\{\xi: 
(y,\xi) \in E(F, \infty) \cap E\big(F, \cup_k S(z_k)\big)
\right\}\,,
$$ 
i.e.\ those for which the two-sided geodesic starting at $(y,\xi)$ is bounded and stays away from 
points $z_k$ for all $k$, is thick.
\end{cor}

Another} 
One application of that 
is a possibility of intersecting sets
$E(F^+,z)$ for {\it different\/} choices of groups $F$, see Corollary \ref{cor: S}.
\smallskip

The structure of the paper is as follows. 
In \S \ref{sec:  games} we describe modified Schmidt games,
the main tool designed to prove our results. Then in \S\ref{cc} we use reduction theory and 
geometry of rank-one \hs s to state criteria for compactness of subsets of $\ggm$. The next section
contains the definition, properties and examples of $(F^+,\Gamma)$-expanding subgroups.
There we also state Theorem \ref{thm: more general} 
which is proved in \S\ref{winning}.
Then in \S\ref{esc points}
we deal with Theorem \ref{thm: B} via its stronger version, Theorem \ref{thm: B more general},
and derive Corollary \ref{cor: D}. 
The reduction of 
Theorem \ref{cor: F} to  Corollary \ref{cor: D} occupies \S\ref{margconj}. Several concluding 
remarks are made in the last section of the paper.

\medskip
\noindent{\bf Acknowledgements.} 
The authors are grateful to the hospitality of the Fields Institute for Research in Mathematical Sciences
(Toronto) and Max Planck Institute for Mathematics
(Bonn), where some parts of this work were accomplished.
Thanks are also due 
 to Manfred Einsiedler and Elon Lindenstrauss for explaining the proof of Proposition
\ref{prop: smalldim irr}. The authors were supported by BSF grant 2000247, ISF grant 584/04, and NSF
Grants DMS-0239463, DMS-0801064.

\ignore{
\section{Notation and preliminaries}\name{prelim}

\commargin{maybe it is better to spread all this out to other sections}
  $B(x,r)$ and $\bar B(x,r)$ stand for the open/closed
ball centered at $x$ of radius $r$.  
To avoid confusion, 
we will sometimes put subscripts indicating the underlying 
metric space. 
If the metric space is a group and $e$ is its identity element, 
we will simply write $B({r})$ instead of $B(e,{r})$.
We also denote by
$$\underline{\dim}_B(X) \df \liminf_{\vre \to 0} \frac{\log N(\vre)}{-\log \vre},\ 
\overline{\dim}_B(X) \df \limsup_{\vre \to 0} \frac{\log N(\vre)}{-\log \vre}
$$
the lower and upper box dimension of $X$, 
where $N(\vre)$ is the smallest number of sets of diameter $\vre$
needed to cover $X$. Note that one always has
 \eq{boxdim bounds}{
\dim(X)\le \underline{\dim}_B(X)\le  \overline{\dim}_B(X)\,.}

We will need two classical facts about \hd\ of metric spaces:

\begin{lem}\name{marstrand} {\rm (Marstrand Slicing Theorem, see \cite[Theorem 5.8]{Falconer} or
\cite[Lemma 1.4]{KM})}
 Let $X$ be a Borel subset of a manifold 
such that $\dim(X) \ge \alpha$, let $Y$ be a metric space, and let $Z$ be a subset 
 of the direct product $X\times
Y$ such that
$$
\dim\big(Z\cap (\{x\}\times Y)\big)\ge\beta
$$
for all $x\in X$.
Then 
$\dim(Z)\ge\alpha+\beta$.
\end{lem}

\commargin{need to check a reference}
\begin{lem}\name{wegmann} {\rm (Wegmann Product Theorem, see \cite[Theorem 5.?]{Falconer})} 
 For any two metric spaces $X,Y$, 
$$
\dim(X) + \dim(Y)\le \dim(X\times Y)\le \dim(X) + \overline{\dim}_B(Y)\,.
$$
\end{lem}

\smallskip

Let $G$ be a real Lie group, $\g$ be its Lie algebra, and let $F$ as in \equ{f}
be non-quasiunipotent.
Recall that one can define 
$\h^+$, $\h^0$, $\h^-$ to be the subalgebras of $\g$ with complexifications
$$
\h^+_{{\Bbb C}} = \text{span}(E_\lambda : |\lambda|>1),\,
\h_{{\Bbb C}}^0 = 
\text{span}(E_\lambda : |\lambda|=1),\, \h_{{\Bbb C}}^- =
\text{span}(E_\lambda : |\lambda|<1)\,,
$$
where $E_\lambda$ is the generalized eigenspace  of Ad$\,g_1$ in the complexification of $\g$.
Then $\h^+$ is exactly the Lie algebra of $H^+$ as in \equ{defn of U+}.
The fact that $F$ is not \qu\ 
implies that $\h^+ \ne \{0\}$ and $\h^- \ne \{0\}$.
Moreover one can consider   subgroups  $H^0$, $H^-$ of $G$ corresponding to $\h^0$
and $\h^-$; note that $H^-$ is the \ehs\ corresponding to $g_{-1}$. 
Since \eq{decomp}{\g = \h^+ \oplus\h^0\oplus\h^-\,,} it follows that the product map
$H^+\times H^0 \times H^-\to G$ is a homeomorphism in the neighborgood of identity
(regardless of the order of the factors).
We are going to fix a Euclidean structure on $\g$ so that  \eq{decomp}
is a decomposition of $\g$ into mutually orthogonal subspaces, and use it to define
a right-invariant Riemannian metric on $G$. 
This way, the exponential map $\exp:\g\to G$ is almost an isometry locally around $0\in\g$.
More precisely, for any positive $\vre$ one can choose a neighborhood $U$ of $0\in\g$
such that $\exp|_U$ is $(1+\vre)$-bilipschitz.
 
If $\Gamma$ is a discrete subgroup of $G$,we will  equip
 $\ggm$ with the Riemannian metric coming from $G$. 
For any $x\in\ggm$ we let $\pi_x$ stand for the quotient map
 $G\to\ggm$, $g\mapsto gx$, which is an isometry when restricted to 
a (depending on $x$) neighborhood of $e\in G$.

\vfil\eject
}
\section{Games}\name{sec: games}
In what follows, all distances (diameters of sets) in various metric spaces 
will be denoted by `dist' (`diam').
 Let $H$ be a connected 
Lie group with a right-invariant Riemannian metric, 
and assume that it admits 
a one-parameter group of 
automorphisms $
\{{\Phi}_t: t \in\R\}$ such that $\Phi_t$ is contracting for positive  $t$ 
(recall that ${\Phi}:H\to H$ is {\sl contracting\/} if 
for every $g\in H$, ${\Phi}^k(g)\to e$ as $k \to \infty$). 
%
In other words, ${\Phi}_t = \exp(tY)$, where $Y \in \End \big(\Lie(H) \big)$
 and the real parts of all eigenvalues of $Y$ are negative. Note that
$Y$ is not assumed to be diagonalizable.
We denote by $\mathcal{F}$ the semigroup  $\mathcal{F} \df \{{\Phi}_t : t > 0\}$
corresponding to positive values of $t$.
In fact, in all the examples of the present paper $H$ will be a subgroup
of $H^+$ as in \equ{defn of U+} normalized by $F$ as in \equ{f},
 and $\mathcal{F}$ will be of the form \equ{def phi}.

Say that a subset $D_0$ of $H$  is {\sl admissible\/} if it
is  compact and has non-empty interior. Fix an admissible $D_0$ and denote by $\cd_t$ the set of 
all right-translates of $\Phi_t(D_0)$, \eq{def dt}{\dt \df \{\Phi_t(D_0)h : h \in H\}\,.} 
Because the maps $\Phi_t$, $t > 0$, are contracting and $D_0$ is  admissible,
there exists $a_*$ such that for any $t > a_*$, the set of $D\in \cd_t$
contained in $D_0$ is nonempty.
Also, since $D_0$ is bounded, it follows that for some $c_0 > 0$
one has 
${\dist \big({\Phi}_t(g),{\Phi}_t(h)\big) \le c_0 e^{-\sigma t}
}$
for all $g,h\in D_0$, where $\sigma>0$ is  such that  the real
parts of all the eigenvalues of $Y$ as above are  smaller than $-\sigma$. 
Therefore 
\eq{diam squeeze}{\diam(D) \le  c_0 e^{-\sigma t}\quad\forall\,D\in \dt}
(recall that the metric is chosen to be right-invariant, so all the elements of $\dt$ are isometric to
${\Phi}_{t}(D_0)$).

\smallskip

Now pick two real numbers $\A,\B > a_*$ and, following \cite{games},
define a game, played by two players, Alice and Bob. 
\commargin{Barak: I put in Alice and Bob everywhere.}
First Bob 
picks $t_1\in\R$ and $h_1\in H$, thus defining $B_1
\df \Phi_{t_1}(D_0)h_1\in \mathcal{D}_{t_1}$. Then Alice 
picks a
translate $A_1$ of  $\Phi_\A(B_1)$ which is contained in $B_1$, Bob 
picks a translate $B_2$ of $\Phi_\B(A_1)$ which is contained in
$A_1$, after that Alice 
picks a translate 
$A_2$ of $\Phi_\A(B_2)$ which is contained in $B_2$,
and so on. In other words, for $k\in \N$ we set 
 \eq{si ti}{t_k =   t_1+(k-1)(\A+\B)\text{ and  }
t_k' =   t_1+(k-1)(\A+\B) + \A 
\,.}
Then at the $k$th step of the game, Alice 
picks $A_k \in \cd_{t_k'}$ inside $B_{k}$, and then Bob
picks $B_{k+1} \in \cd_{t_{k+1}}$ inside $A_k$ etc. Note that Alice
and Bob 
can choose such sets because $\A,\B > a_*$, and that the intersection
\eq{int}{
\bigcap_{i = 1}^\infty A_i = \bigcap_{i = 0}^\infty B_i} 
is nonempty and consists of a single point in view of 
\equ{diam squeeze}. We will refer to this procedure as to the {\sl
$\{\dt\}$-$(\A, \B)$-modified Schmidt game}, 
abbreviated as  $\{\dt\}$-$(\A, \B)$-MSG.
Let us say that $S\subset H$ is {\sl $(\A, \B)$-winning\/}
for the $\{\dt\}$-MSG 
if 
Alice 
can proceed in such a way that the point \equ{int} is contained in $S$ no matter how 
Bob 
plays. 
Similarly, say
 that $S$ is an  {\sl $\A$-winning\/} set of the game if 
 $S$ is $(\A, \B)$-winning for any choice of $\B > a_*$, and that $S$ is
{\sl winning} if it is $\A$-winning for some $\A > a_*$. 

\smallskip
The game described above 
falls into the framework of $(\frak F, \frak S)$-games introduced by
Schmidt in \cite{Schmidt games}. Taking $H = \R^n$, $D_0$ to be the 
closed unit ball in $\R^n$ and $\Phi_t = e^{-t}\Id$ corresponds to the set-up
where $\cd_t$ consists of balls of radius $e^{-t}$, the standard special case
of the construction often referred to as {\sl Schmidt's $(\alpha,\beta)$-game\/}\footnote{Traditionally
the game is described in multiplicative notation, where radii of balls are multiplied 
by fixed constants $\alpha = e^{-a}$
and $\beta = e^{-b}$.}. It was shown in \cite{games} that many features of Schmidt's
original game, established in \cite{Schmidt games}, are present in this more general situation. 
\commargin{Barak: deleted a sentence here, see Tex file.}
The next theorem summarizes the two basic properties important for our purposes:

\begin{thm}
\name{thm: countable general}  Let $H$, $\mathcal{F}$ and admissible $D_0$ be as above.

\begin{itemize}
\item[(a)]  \cite[Theorem 2.4]{games} Let $\A > a_*$,
and let $S_i\subset H$, $i\in\N$, be 
a sequence of $\A$-winning sets of the $\{\dt\}$-MSG.
Then $\cap_{i = 1}^\infty S_i$
is also  $\A$-winning.

\item[(b)] \cite[Corollary 3.4]{games} Any winning set for the $\{\dt\}$-MSG 
is thick. 
\end{itemize}
\end{thm}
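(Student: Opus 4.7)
The plan is to prove part (a) by a diagonal interleaving of winning strategies $\tau_i$ for each $S_i$, and part (b) by exhibiting inside $S$ a Cantor-type set whose Hausdorff dimension can be driven arbitrarily close to $\dim(H)$.

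For part (a), fix $\B > a_*$. For each $i$ choose an integer $k_i \ge 1$ so that $\B_i \df k_i(\A+\B) - \A > a_*$, and let $\tau_i$ be a strategy witnessing that $S_i$ is $(\A,\B_i)$-winning. The key alignment is that one round of the auxiliary $(\A,\B_i)$-MSG advances the level by $\A+\B_i = k_i(\A+\B)$, which equals the advance of $k_i$ consecutive rounds of the actual $(\A,\B)$-MSG. Partition $\N$ into infinite subsets $J_1, J_2, \dots$ with rounds in $J_i$ spaced by $k_i$ and arranged so that no round is critical for two different auxiliary games. Alice's combined strategy is: at each round $n \in J_i$, play the (admissible) move prescribed by $\tau_i$, regarding the current $B_n$ as Bob's latest move in the $i$th auxiliary game; at rounds in none of the $J_i$, make any admissible choice. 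Because the common outcome of the actual game is simultaneously the outcome of each auxiliary game played under $\tau_i$, it lies in $\bigcap_i S_i$.

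For part (b), fix a nonempty open $U \subseteq H$ and $\vre > 0$; we aim to show $\dim(S \cap U) \ge \dim(H) - \vre$. Since $S$ is $\A$-winning, Alice has a winning strategy in the $(\A,\B)$-MSG for every $\B > a_*$. Choose $t_1$ large enough that some $B_1 \in \cd_{t_1}$ is contained in $U$ (possible by \equ{diam squeeze}), and consider the tree of plays in which Alice follows her strategy while Bob branches over every admissible choice of $B_{k+1} \in \cd_{t_{k+1}}$ inside $A_k$. A Haar-measure packing estimate, using a norm on $\Lie(H)$ adapted to the Jordan decomposition of $Y$, produces at least $N(\B)$ pairwise disjoint choices for Bob at each step. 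The limiting set lies in $S \cap U$, and a mass-distribution argument applied to an anisotropic gauge yields a Hausdorff-dimension lower bound that tends to $\dim(H)$ as $\B \to \infty$; choosing $\B$ sufficiently large gives the claim.

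The principal obstacles are as follows. In (a), the bookkeeping must ensure that critical Alice-rounds for distinct auxiliary games never collide and that the effective Bob move presented to $\tau_i$ is genuinely the result of the intervening $k_i$ actual rounds; both are handled by the level-alignment observation above combined with sufficient spacing of the $J_i$. In (b), the main difficulty is the potential non-conformality of $\Phi_t = \exp(tY)$: since $Y$ need not be diagonalizable, the sets $\Phi_t(D_0)$ may become highly anisotropic as $t \to \infty$, and a naive packing-versus-diameter calculation would give a dimension strictly smaller than $\dim(H)$. This is resolved either by adopting a metric under which $\Phi_t$ is uniformly diagonal up to bounded distortion, or by estimating the Hausdorff dimension through a family of gauges adapted to the eigenspace decomposition of $Y$; this step is the crux of the argument.
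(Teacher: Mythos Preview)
The paper does not prove this theorem: both parts are simply quoted from \cite{games} (Theorem~2.4 and Corollary~3.4 there), with no argument given. So there is no ``paper's own proof'' to compare against. That said, your sketch for both parts follows the standard line of argument that one finds in \cite{games} and in Schmidt's original work.

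For part~(a) your interleaving is correct and is essentially Schmidt's proof. One small point of presentation: you introduce $k_i$ as if it were a free parameter (``choose an integer $k_i\ge 1$ so that $\B_i>a_*$''), but since already $k_i=1$ gives $\B_i=\B>a_*$, the real constraint on the $k_i$ comes from the requirement that the $J_i$ be disjoint arithmetic progressions with common differences $k_i$. The usual choice $J_i=\{2^{i-1}(2m-1):m\in\N\}$, with $k_i=2^i$, does the job. You should also make explicit that between two consecutive $J_i$-rounds the level advances by exactly $k_i(\A+\B)=\A+\B_i$, so that the intervening nested chain of moves is a legitimate single Bob move in the auxiliary $(\A,\B_i)$-game; you say this but it is the only place one can go wrong.

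For part~(b) your outline is the right shape, and you correctly isolate the genuine difficulty: when $Y$ is not a scalar, the domains $\Phi_t(D_0)$ are highly eccentric and a naive count of disjoint translates versus diameter gives the wrong exponent. What you have written, however, is a description of the obstacle together with a menu of possible fixes (``adapted metric'' or ``anisotropic gauges''), not an argument. In \cite{games} this step is carried out concretely via a Haar-measure (Frostman-type) argument: one places on the Cantor set the natural measure giving equal mass to the $N(\B)$ branches at each stage, and the key estimate bounds the mass of a Euclidean ball $B(x,r)$ by comparing it with a translate of $\Phi_t(D_0)$ at the appropriate scale. Your proposal would be complete once you actually execute one of the two routes you name; as written, the crux of~(b) is asserted rather than proved.
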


\ignore{
Here is another lemma from \cite{games}:

\begin{lem}  \cite[Lemma 2.5]{games} 
\name{lem: dummy} Let $H$, $\mathcal{F}$ and admissible $D_0$ be as above,
 and suppose that $S\subset H$,  $\A, \B > a_*$ 
are such that whenever Bob 
initially chooses $B_1 \in \dt$ with $t \ge {t_0}$, Alice 
can win the $\{\dt\}$-$(\A, \B)$-MSG. 
Then $S$ is an $(\A,\B)$-winning set of the game.
\end{lem}

Indeed, Alice 
can 
make arbitrary (dummy) moves waiting until $t_k$ becomes at least ${t_0}$,
and then apply the strategy she is assumed to have. 
Consequently, the collection of  $(\A,\B)$-winning sets of the game
depends only on 
the `tail' of the family $\{\dt\}$. 

\smallskip

Finally, l}

Let us record another useful observation made in \cite{games}:

\begin{prop}  \cite[Proposition 3.1]{games}
\name{prop: initial domain} Let $D_0, D_0'$ be admissible, and  define 
 $\{\cd_t\}$ and $\{\cd'_t\}$ as in \equ{def dt} using $D_0$ and $D_0'$
respectively. Let
$s>0$ be such that 
for 
some $h,h'\in H$,
\eq{squeeze}{
 {\Phi}_{s}(D_0)h\subset D_0' 
\text{ and }  {\Phi}_{s}(D_0')h'\subset D_0
}
(such an $s$ exists in light of \equ{diam squeeze} and the admissibility of $D_0, D_0'$). 
Suppose that  
 $S$ is $a$-winning for the $\{\dt\}$-MSG, then it is $(\A+
2s)$-winning for the $\{\cd'_t\}$-MSG. 
\end{prop}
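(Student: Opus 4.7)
The plan is to show Alice can convert her $a$-winning strategy for the $\{\dt\}$-MSG into a winning strategy for the $\{\cd'_t\}$-MSG played with parameters $(a+2s,\B)$ for any $\B>a_*$. The natural correspondence is induced by the squeeze conditions \equ{squeeze}: applying $\Phi_t$ to $\Phi_s(D_0)h\subset D_0'$ yields $\Phi_{t+s}(D_0)\Phi_t(h)\subset\Phi_t(D_0')$, so any member of $\cd_{t+s}$ can be nested inside a given member of $\cd'_t$ by right-translation; symmetrically, members of $\cd'_{t+s}$ fit inside given members of $\cd_t$. Thus each move in one game produces a canonical move in the other, shifted by $s$ in time.

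More concretely, suppose Bob opens the $\{\cd'_t\}$-$(a+2s,\B)$-game by choosing $B'_1\in\cd'_{t_1}$. Alice internally starts a simulation of the $\{\dt\}$-$(a,\B+2s)$-game in which Bob opens with $B_1\in\cd_{t_1+s}$ contained in $B'_1$ (obtained via the first inclusion in \equ{squeeze}). Her $a$-winning strategy then produces $A_1\in\cd_{t_1+s+a}$ inside $B_1$. She translates $A_1$ back to the real game by choosing $A'_1\in\cd'_{t_1+a+2s}$ inside $A_1$, using the second inclusion in \equ{squeeze}; this $A'_1$ is her actual move. When Bob responds with $B'_2\in\cd'_{t_1+a+2s+\B}$ inside $A'_1$, she feeds into the simulation the set $B_2\in\cd_{t_1+s+a+\B+2s}=\cd_{t_2+s}$ nested inside $B'_2$, and so on. A direct check shows the time-indices match precisely: the step-size $a+2s+\B$ in the real game coincides with the step-size $a+(\B+2s)$ in the simulation, each shifted by a constant $s$.

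Since $\B>a_*$ gives $\B+2s>a_*$, the simulation is a legitimate $\{\dt\}$-$(a,\B+2s)$-game. The crucial containments $A'_k\subset A_k\subset B'_k\supset A'_k\supset B_{k+1}\supset A_{k+1}\supset A'_{k+1}$ imply that the two nested sequences $\{A_k\}$ and $\{A'_k\}$ have a common intersection point, which is a singleton by \equ{diam squeeze}. By hypothesis this singleton lies in $S$, so Alice wins the $\{\cd'_t\}$-game as well.

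There is no real obstacle beyond careful bookkeeping; the main thing to verify is that the two squeeze inclusions of \equ{squeeze} allow moves to be transferred both from $\cd'$ to $\cd$ (Bob's moves) and from $\cd$ to $\cd'$ (Alice's responses), and that the time-shift $s$ absorbed on each transfer accumulates to exactly $2s$ per full round, which is precisely the deficit in Alice's parameter. The conclusion that $S$ is $(a+2s,\B)$-winning for every $\B>a_*$ then gives $S$ is $(a+2s)$-winning for the $\{\cd'_t\}$-MSG.
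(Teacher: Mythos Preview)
Your argument is correct and is essentially the standard proof of this fact; note that the paper does not prove the proposition at all --- it is quoted from \cite{games} --- so there is no ``paper's own proof'' here to compare against. Your translation mechanism via the two inclusions in \equ{squeeze} is exactly the intended one, and your bookkeeping of the time indices (each transfer costing $s$, two transfers per round giving the $2s$ deficit) is right.

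One small point to tighten: you write ``Since $\B>a_*$ gives $\B+2s>a_*$,'' but the two games have a priori different thresholds, say $a_*$ for $\{\cd_t\}$ and $a_*'$ for $\{\cd'_t\}$; the hypothesis on $\B$ is $\B>a_*'$, while what you need in order to invoke Alice's $a$-winning strategy is $\B+2s>a_*$. This is harmless: composing the two squeeze inclusions gives $\Phi_{2s}(D_0)h''\subset D_0$ for some $h''$, so one may take $a_*\le 2s$, and then $\B+2s>2s\ge a_*$ for any $\B>0$. Alternatively (and more simply) you can observe that in the simulated game Bob's moves are being \emph{constructed} rather than guaranteed abstractly, so the only place $a_*$ enters is in the meaning of ``$a$-winning,'' and the preceding remark handles that.
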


Based on the above proposition, in what follows we are going to choose 
some admissible initial domain $D_0\subset H$, not worrying about specifying it 
explicitly,  and use it to define the game.
Moreover, we will suppress $D_0$ from the notation and refer to the game
as to the modified Schmidt game {\sl induced by $\mathcal{F}$\/}. Even though the game itself,
and the value of $a_*$,
will depend on the choice of an admissible $D_0$, the class of winning sets will not. 
Hopefully this terminology will not lead to confusion.

\ignore{

\vfil\eject

 All distances (diameters of sets) in various metric spaces 
will be denoted by ``dist" (``diam").  $B(x,r)$ and $\bar B(x,r)$ stand for the open/closed
ball centered at $x$ of radius $r$.  
To avoid confusion, 
we will sometimes put subscripts indicating the underlying 
metric space. 
If the metric space is a group and $e$ is its identity element, 
we will simply write $B({r})$ instead of $B(e,{r})$.
The \hd\ of a metric space $X$ is denoted by dim$(X)$.

\commargin{CAUTION: this definition is different from Schmidt's (the same applies to our
submitted MSG paper) - he works with pairs $(x,r)$ which is sometimes not the same.
 We need to decide what to do!!!} 
We begin  by describing a game introduced by W.\ Schmidt
in the mid-1960s \cite{Schmidt games}. Let $E$ be a complete metric 
space, pick $0 < \alpha,\beta < 1$, and consider 
the following game 

The following theorems were proved by Schmidt \cite{Schmidt games}:

\begin{thm}
\name{thm: countable} Let $S_i\subset E$, $i\in\N$, be
a sequence of $\alpha$-winning sets for some $0 < \alpha < 1$; 
then $\cap_{i = 1}^\infty S_i$
is also  $\alpha$-winning.
\end{thm}

\begin{thm}
\name{thm: full dim} Suppose the game is played on  $\R^n$, with the
Euclidean metric. If $S$ is
a
winning set, then for any nonempty open
$U\subset \R^n,$ 
\ignore{
\eq{conclusion full dim}{
\dim(S\cap U) \ge \frac{\log c_n\beta^{-n}}{|\log \alpha\beta|}\,,
}
where $c_n$ is a constant depending only on $n$; in particular any $\alpha$-winning 
subset of $\R^n$  has \hd\ $n$.}
$\dim (S\cap U) = n.
$
\end{thm}

Schmidt's original definition of the game was slightly different, see
\cite{games}. 
It can also be shown that for various classes of continuous maps of metric spaces,
the images of winning sets are also winning for suitably modified values of constants.
In \cite{Schmidt}. Schmidt proved that for any $m,n\in\N$, the set
of \ba\ \amr\ is 
$\alpha$-winning for any $0 < \alpha \le 1/2$.

\medskip

Following \cite{games}, we now present a modification of  Schmidt's game as follows. 
Suppose we are given $t_* \in \R$ and $a_* \geq 0$, and a family $\cd$ of non-empty
compact subsets of 
$E$ subdivided into subfamilies indexed by
$$
\cd = \bigcup_{t \geq t_*} \cd_t   $$
satisfying the following property:
\begin{itemize}
\item[(MSG0)] 
For any $t \geq t_*,$ any $s >a_*$  and any $D \in \cd_t$ there is $D'
\in \cd_{t+s}$ with $D' \subset D.$
\end{itemize} 

Pick two numbers $\A$ and $\B$, both bigger than $a_*$.  
Now Bob
begins the $\cd$-$(a, b)$-game by choosing $t_0 \ge t_*$
and $B_0 \in \cd_{t_0}$. Set 
\eq{si ti}{
s_i = t_0+(i-1)(\A+\B)+\A \ \ \mathrm{and} \ \ t_i = t_0+
i(\A+\B)
}
(thus $s_i = t_{i-1}+\A$ and $t_i = s_i+\B$). Then at the $i$th
step Alice
picks $A_i \in \cd_{s_i}$ inside $B_{i-1}$ and then Bob
picks $B_i \in \cd_{t_i}$ inside  $A_i$. Note that Alice and Bob 
can choose such sets by virtue of (MSG0), and that the intersection
 \equ{int} 
 is  nonempty and compact. 
Let us say that
$S\subset E$ is {\sl $(a, b)$-winning\/}
for the {\sl modified Schmidt game corresponding to\/} $\cd$, to be
abbreviated as $\cd$-MSG, 
if 
Alice 
can proceed in such a way that the set \equ{int} is contained in $S$
no matter how Bob 
plays. 
Similarly, say that $S$ is an  {\sl $a$-winning\/} set of the game if 
$S$ is $(a, b)$-winning for any choice of $b > a_*$, and that $S$ is
{\sl winning} if it is $a$-winning for some $a > a_*$. 
Note that we are suppressing $a_*$ and $t_*$ from our notation,
hopefully this will cause no confusion. 

Clearly the game described above coincides with the original Schmidt game if we let 
\eq{rn}{
\cd_t = \{\bar B(x,e^{-t}):
x \in E\}
, \ a =- \log \alpha, \ b =- \log \beta, \ a_*=0, \ t_* = -\infty\,.}

The countable intersection property (Theorem \ref{thm: countable}) generalizes to
this setup, namely:
\begin{thm}[{Schmidt,  \cite[Theorem 2.4]{games}}]
\name{thm: countable general} Fix a family $\cd$ as above and $\A > a_*$,
and let $S_i$, $i\in\N$, be 
a sequence of $a$-winning sets of the  $\cd$-MSG. Then $\cap_{i = 1}^\infty S_i$
is also  $a$-winning.
\end{thm}

Here are two more elementary observations about games
and their winning sets.

\begin{lem}
\name{lem: dummy} \cite[Lemma 2.5]{games} 
Suppose that $S\subset E$, $\cd = \bigcup_{t \geq t_*} \cd_t$, $\A, \B
> a_*$ and $T > t_*$ are such that whenever Bob 
initially chooses $t_0 \ge T$, Alice 
can win the game. 
Then $S$ is an $(\A,\B)$-winning set of the $\cd$-MSG.
\end{lem}


Lemma \ref{lem: dummy} shows that  the collection of  $(\A,\B)$-winning sets of a given $\cd$-MSG
depends only on the `tail' of the family $\cd$ and not on the value of $t_*$. 
In what follows it will be often convenient to either ignore $t_*$ or fix $t_* = 0$.

\medskip
 
The next statement generalizes Schmidt's lower estimate for the
  \hd\  of winning sets.
  Note that in general it is not true,
 even for the original  Schmidt game  \equ{rn} played on an arbitrary
complete metric space, that winning sets have positive \hd: see \cite[Proposition 5.1]{games}
for a counterexample. 
Following \cite{games}, we are going to specialize and 
 take $E = H$ to be a connected 
Lie group with a right-invariant Riemannian metric.
\ignore{ and $\mu$ to be
Haar measure on $H$, which clearly satisfies \equ{pl} with $\gamma$ being equal
to the dimension of the group; thus all winning sets will be thick. 
Let us now describe families $\cd$ that we will be using. }
We will assume that $H$
admits 
a one-parameter semigroup  $\mathcal{F} = \{\Phi_t : t > 0\}$ of 
contracting automorphisms  (that is, with 
$\Phi_t(h)\to e$ as $t \to \infty$ for every $h\in H$).
 It is not hard to see that such a group $H$
must be simply connected and nilpotent, 
and the differential of $\Phi_t$ 
for each $t > 0$ must be a linear isomorphism of
the Lie algebra $\goth h$ of $H$ with the modulus of all  eigenvalues strictly less than $1$. 
In other words, $\Phi_t = \exp(tX)$ where $X \in \End ( \goth h)$ and the real parts of all eigenvalues of $X$ are negative. Note that
$X$ is not assumed to be diagonalizable.

Say that a subset $D_0$ of $H$  is {\sl admissible\/} if it
is  compact and has non-empty interior.
For such $D_0$ and any $t \ge 0$, 
define 
\eq{def dt}{\cd = \bigcup_{t \geq 0} \cd_{t}, \, \text{ where } \cd_t \text{
consists of
all right translates of } \Phi_{t}(D_0).}
The game determined by the family $\cd$ as in \equ{def dt} is referred to as 
a  modified Schmidt game {\sl induced\/} by $\mathcal{F}$; since automorphisms 
 $\Phi_t$ are contracting and $D_0$ is admissible, it is clear that one can find 
 $a_0$ (depending on $D_0$) such that (MSG0) holds.

%
%
%

\begin{prop}[{\cite[Corollary 3.4]{games}}]
\name{cor: msg3}
Any winning set for an MSG induced by $\mathcal{F}$ as above is thick. 
\end{prop} 

It is also worthwhile to point out that the winning property of a set does not depend
on the choice of the initial domain $D_0$ as long as it is admissible,
as follows from another statement from \cite{games}:

\begin{prop}
\name{prop: initial domain}  \cite[Proposition 3.1]{games} Let $D_1, D_2$ be admissible and  define 
 $\cd^{(1)}$ and $\cd^{(2)}$ as in \equ{def dt} using $D_1$ and $D_2$
respectively. Let
$s>0$ be such that 
for $i=1,2$, 
\eq{squeeze}{
 \Phi_{s}(D_i) 
\text{ is contained in a translate of } D_{3-i} 
}
(such an $s$ exists since the domains are admissibe). 
Suppose that $S\subset H$ is $(\A, \B)$-winning for the $\cd^{(1)}$-MSG. 
Then it is $(\A + 2s,\B - 2s)$-winning for the $\cd^{(2)}$-MSG. In particular, if 
 $S$ is $a$-winning for the $\cd^{(1)}$-MSG, then it is $(\A+
2s)$-winning for the $\cd^{(2)}$-MSG. 
\end{prop}
}

\ignore{
\section{Preliminaries}\name{prelim}
\combarak{List some results and notation to be used: much of section
3; tree-like constructions and McMullen-Urbanski type estimate}
\comdima{I am listing here some other preliminaries, we'll see later
what should be written where}

Let $G$ be a unimodular real Lie group,  $\Gamma$
 a discrete subgroup of $G$, $\g$ the Lie algebra of $G$.
Fix a Euclidean structure on $\g$ inducing 
a right-invariant Riemannian metric on $G$. 
We will  equip
 the \hs\ $\ggm$ with the Riemannian metric coming from $G$. 
For any $x\in\ggm$ we let $\pi_x$ stand for the quotient map
 $G\to\ggm$, $g\mapsto gx$, which will be an isometry restricted to 
a (depending on $x$) neighborhood of $e\in G$.

Note that the tangent space to $\ggm$ at any $x\in\ggm$ can be 
identified with $\g = T_eG$ via $ (d\pi_x)_e$; we will make use 
of this identification and denote $T_x(\ggm)$ by $\g_x$. 
We also denote by $\exp_x:\g_x\to\ggm$ the composition 
$\pi_x\circ\exp$. 

The following lemma summarizes some elementary properties:

\begin{lem}\name{exp}
\begin{itemize}
\item[(a)] There exists positive $\sigma_0<1$ such that
 for any $u,v\in B_\g(4\sigma_0)$, 
\eq{sigma0}{\frac12\|u-v\| \le \text{\rm dist}\big(\exp(u),\exp(v)\big) 
\le 2\|u-v\|} (i.e.~up to a factor of $2$, $\exp$ is an isometry 
$4{\sigma_0}$-close to $0\in \g$);
\item[ (b)] for any bounded $Q\subset\ggm$ there 
exists positive $\sigma_1 = \sigma_1(Q)<\sigma_0$, such that for all $x\in Q$,
the map $\pi_x$ is injective (hence an isometry) on  $B({4\sigma_1})$, and,
furthermore
\begin{itemize}
\item[ (b1)] for any $y\in B(x,2\sigma_1)$ and $g\in B({2\sigma_1})$, 
\eq{sigma1dist}{\tfrac12 \dist(x,y)\le \dist(gx,gy) \le 2 \dist(x,y);}
\item[ (b2)] for any $u,v\in B_{\g_x}(2\sigma_1)$,
\eq{sigma1exp}
{\frac12\|u-v\| \le \dist\big(\exp_x(u),\exp_x(v)\big) \le
2\|u-v\|\,.
}
\end{itemize}
\end{itemize}
\end{lem}
 
\ignore{We will denote by $\log_x:\ggm\to\g_x$ 
the inverse of $\exp_x$ defined (and bi-Lipschitz) at least 
in $B(x,2\sigma_1)$, $x\in Q$.}

Now let $F = \{g_t: t \ge 0\}$ 
be a non-quasiunipotent one-parameter subsemigroup of $G$.
Let $\h$, $\h^0$, $\h^-$ be the subalgebras of $\g$ with complexifications
$$
\h_{\C} = \text{span}\big(E_\lambda : |\lambda|>1\big),\
\h_{\C}^0 = 
\text{span}\big(E_\lambda :  |\lambda|=1\big),\ \h_{\C}^- =
\text{span}\big(E_\lambda :  |\lambda|<1\big)\,,
$$
where  for
$\lambda\in\C$,  $E_\lambda$ stands for the  {\sl generalized
eigenspace\/} of $\Ad\,g_1$ in the complexification $\g_{\C}$ of $\g$:
$$
E_\lambda = \{v\in\g_{\C}\mid (\Ad\,g_1 - \lambda I)^jv =
0\text{ for some }j\}\,.
$$
The fact that $g_1$ is not \qu\ 
implies that both $\h$ and $\h^-$ are nontrivial. Let $H$, $H^0$, $H^-$
be the corresponding subgroups of $G$. Note that $H$ is exactly
the \ehs\ $H_F^+$ corresponding to $F$ as defined in \equ{defn of U+}.
Clearly 
$\g = \h\oplus\h^0\oplus\h^-$, which in particular implies that 
the multiplication map
from the direct product of $H$, $H^0$ and $H^-$ (not necessarily in this order) 
to $G$ is locally one-to-one and, in view of Lemma \ref{exp},
bi-Lipschitz. 
Note also that both $H$ and $H^-$ are connected simply connected nilpotent
Lie groups. We will denote by 
 $m$ a (left = right) Haar measure on $H$.

Clearly the subalgebras $\h$, $\h^0$, $\h^-$ are
invariant under Ad$\,g_t$, which implies that the subgroups
$H$, $H^0$, $H^-$ are normalized by $F$. Moreover, it is easy to show that
the inner automorphism $\pt:G\to G$, $g\to g_tgg_{-t}$, $t>0$, defines
an {\it expanding\/} automorphism of $H$.
\commargin{All this should be done for subgroups of $H$ if we are to
consider them, including notation for $\Phi_t$, eigenvalues, Jacobian,
tessellations etc.}
\ignore{\begin{equation*}
\begin{split}
\forall \text{ compact }K\subset H,\quad \forall\text{ open
}V\subset H,\ e\in V,\\
\exists\, t_0\text{ such that}\quad t>t_0\Rightarrow K\subset\pt(V)\,.\quad
\end{split}
\end{equation*}
Similarly, the automorphism
$\pt$, $t>0$, is {\it contracting\/} on the subgroup $H^-$ (that is,
$\pt^{-1}|_{{H^-}}$ 
is expanding).}
Specifically,  let $k$ be the dimension of $H$ and denote by  
$\lambda_1,\dots,\lambda_k$ the absolute values 
 of the eigenvalues (with multiplicities)
of the restriction 
of $d\Phi_1 = \ad(g_1)$ to $H$. 
We will order them so that  $1 < \lambda_1\le\dots\le \lambda_k$, 
and denote by $J = \lambda_1\cdot\dots\cdot\lambda_k$ the Jacobian of
$\Phi_1$; then 
\eq{jac}{
m\big(\pt(V)\big) = J^t m(V)
} 
for any $t$ and any
measurable subset $V$ of $H$. The next lemma provides a precise description of 
 the contracting properties of 
$\pt^{-1}|_H$: 

\begin{lem}\name{contr} \cite[Lemma 2.2]{K} There exists a
constant $a_0>0$ such that for all $t \ge 1$ and all 
$g,h\in  B_{H}(\sigma_0),\,g\ne
h$, one has
\eq{spectrum}{
\oa(t)\df\tfrac1{a_0}t^{-k}\lambda_k^{-t} \le \frac{\dist\big(\pt^{-1}(g),\pt^{-1}(h)\big)}
{\dist(g,h)}\le \ua(t)\df
a_0t^k\lambda_1^{-t}\,.
}
\end{lem}

Note that the functions 
$\oa(t)$ and $\ua(t)$ defined in  \equ{spectrum} are 
 decreasing for large $t$ and tend to zero as $t\to\infty$. We put 
$$
\ut \df \inf\{\tau > 0 : \ua(t) \le \frac1{16\sqrt{k}} \text{ and }
\ua'(t) \le 0 \text{ for all } t\ge \tau\}
$$
and will be in many cases taking $t$ to be not less than $\ut$. 

\medskip

Our proofs will depend on a possibility to chop up the group $H$ into 
isometric pieces with disjoint interiors. Following \cite{KM, K}, we will say
  that an
open subset $V$ of $H$ is a {\sl \td\/} for the right action of $H$ on
itself  relative to a
countable subset $\Lambda$ of $H$ if

\begin{itemize}
\item[(i)] $m (\partial V) = 0\,,$

\item[(ii)] $\gamma_1 V \cap \gamma_2 V = \varnothing$ for different $\gamma_1,
\gamma_2 \in \Lambda\,,$ and

\item[(iii)] $H = \bigcup_{\gamma\in\Lambda}\gamma\overline{V}\,$.

\end{itemize}

We will say that the pair $(V,\Lambda)$ is a {\it
\tn\/} of $H$. For example, if $\Lambda$ is a lattice in $H$, one can take
$V$ to be the interior of any fundamental domain for $\Lambda$. However
all nilpotent groups,  even those without lattices, can be tessellated, as was 
proved in \cite{KM}. In fact, the following was proved:

\begin{prop} \name{2.5}\cite[Proposition 2.5]{K}
There exist  
positive constants $a_1, a_2$
 such that for any  $0 < {r}\le \sigma_0$ (as in Lemma \ref{exp}) 
one can find a \tn\ 
$(V_r, \Lambda_r)$ of $H$ such that: 
\begin{itemize}
\item[(a)] $B(\frac{r}{4\sqrt{k}})\subset  V_{r}\subset B({r})$;

\item[(b)]  for any  positive $r' \le r$, \ $V_{r'} \subset  V_{r}$;  

\item[(c)] for any positive $b\le 1$
$$
m\big((\partial
V_{r})^{(b{r})}\big) \le a_1\nu( V_{r} ) \cdot b\,;
$$

\item[(d)] for any subset $A$ of $H$,
$$
\#\{\gamma\in\Lambda_{r}\mid 
V_{r}\gamma\cap A\ne\varnothing\} 
\le \frac{m(A^{(2{r})})}{\nu( V_{r} )}\,;
$$
in particular, for $A$ of diameter not greater than $\ell{r}\le \sigma_0$
$$
\#\{\gamma\in\Lambda_{r}\mid 
V_{r}\gamma\cap A\ne\varnothing\} 
\le a_2(\ell+4)^k\,.
$$
\end{itemize}
\end{prop}

Roughly speaking, parts (a) and (d) say that one can think of the
sets $V_{r}$ as of balls of
radius ${r}$: each of $V_{r}$ is contained in a ball of
radius ${r}$, and each such ball can be covered by at most $a_26^k$
translates of $V_{r}$. 

\medskip
Now given ${r}\le\sigma_0$ and $t > 0$, 
let us denote by $\Lambda_{{r}}(t)$ the set of translations 
$\gamma\in\Lambda_{r}$ such that the translates
$V_{r}\gamma$ lie entirely inside the image
of $V_{r}$ by the map $\pt$, i.e. 
$$
\Lambda_{{r}}(t) \df \{\gamma\in\Lambda_{r}\mid 
V_{r}\gamma\subset\pt(V_{r})\}\,.
$$
The next proposition shows that 
when $t$ is large enough, 
the measure of the union of the translates $V_{r}\gamma$, 
$\gamma\in\Lambda_{{r}}(t)$, is approximately equal to the 
measure of  $\pt(V_{r})$; in other words, boundary effects are negligible. 
 
\begin{prop}\name{2.6} \cite[Proposition 2.6]{K} For any  ${r}\le\sigma_0$ and any 
$t > 0$,
\begin{itemize}
\item[(a)] $ J^t \ge  
\# \Lambda_{{r}}(t) \ge J^t\big(1 - {2a_1}\ua(t)\big)\,;
$
\item[(b)] the  $\pt$-preimage of the union $\bigcup_{\gamma\in\Lambda_{{r}}(t)}
\overline{V_{r}}\gamma$ contains the ball \newline 
$B\left(\big(\frac{1}{4\sqrt{k}} 
- 2\ua(t)\big){r}\right)$
 (note that the latter ball contains $B\big(\frac{r}{8\sqrt{k}}\big)$
 \newline whenever $t \ge \ut$).
\end{itemize}
\end{prop}

\ignore{
In the remaining part of this section we describe
 a  construction from  \cite{KM, K, bad} which we will use for lower
estimates of the \hd.
Let
$A_0$ be a compact subset of a metric space $H$  equipped with a measure $m$.  Say that a
countable family $\ca$ of compact subsets of $A_0$ of positive measure
        is {\em
tree-like\/} (or {\em tree-like with respect to $m$}) if $\ca$ is the
union of finite subcollections 
$\ca_k$, $k\in\N$, such that $\ca_0 = \{A_0\}$ and the following three
conditions are satisfied:
\smallskip
\begin{itemize}
\item[(TL1)]
$\forall\,k\in\N\quad \forall\, A, B \in \ca_k\quad\text{either
}A=B\quad\text{or}\quad {m}(A\cap B) = 0\,;$
\smallskip
\item[(TL2)]
$\forall\,k\in\N\quad \forall\,B \in \ca_k\quad\,\exists\,A\in
\ca_{k-1} \quad\text{such that}\quad B\subset A\,;$
\smallskip
\item[(TL3)]
$\forall\,k\in\N\quad \forall\,B \in \ca_{k-1} \quad\, \ca_{k}(B) \neq
\varnothing,$
where
$$\ca_{k}(B) \df \{A \in \ca_{k}: A \subset B\}
\,.
$$

\end{itemize}

\smallskip

Here every member of the
family corresponds to a node of a certain tree, $A_0$ being the root,
and sets from $\ca_k$ correspond to vertices of the $k$th generation.
Conditions (TL1--3) say that every vertex of the tree has at least one child and
(except for the root) a unique parent,   and sets corresponding to nodes of the
same generation are
essentially disjoint.

Let $\ca$ be a tree-like collection of sets.
For each $k\in \N$, 
the sets $\cup \ca_k$
are nonempty and compact, and from (TL2) it follows that
$\cup \ca_k$ is contained in $\cup \ca_{k-1}$ for any $k\in \N$. Therefore
one can define the (nonempty) {\it limit set\/} of $\ca$ to be
$$
\ay = \bigcap_{k\in
\N}\cup \ca_k\,.
$$

Let us also define the {\em $k$th
stage diameter\/} $d_k(\ca)$ of $\ca$:
$$
d_k(\ca)\df\max_{A\in\ca_k}\text{diam}(A)\,,
$$
and say that $\ca$ is {\em strongly tree-like\/} if it is
tree-like and in addition

\smallskip
\begin{itemize}
\item[(STL)]
$$\lim_{k\to\infty}d_k(\ca) = 0\,.$$

\end{itemize}

Finally, for $k\in \Z_+$ 
let us define the $k$th stage `density of children' of  $\ca$
by
$$
\Delta_k(\ca) \df \min_{B\in\ca_k}\frac{{m}\big(\cup \ca_{k+1}(B)\big)}{{m}(B)}\,,
$$
the latter being always positive due to (TL3).

The following lemma, proved in \cite{bad},  generalizes results of C.\ McMullen 
\cite[Proposition 2.2]{McMullen}
and  M.\ Urbanski \cite[Lemma 2.1]{Urbanski}.

\begin{lem}
\name{lem: Urbanski}
Let $\ca$ be a strongly
tree-like  collection  of subsets of $A_0$.
Then for
any open ball $U$ intersecting $\ay$ one has
\eq{urbanski}
{
\dim (\ay \cap U) \geq \dim(H)
 -
\limsup_{k\to\infty}\frac{ \sum_{i=0}^{k}\log\Delta_i(\ca)}{\log\, d_{k}(\ca)}\,.
}
\end{lem}
}

\section{Some reductions}
\combarak{Here is a summary of what we discussed regarding the steps
in the proof of the Margulis conjecture:
Theorem \ref{thm: B} should be proved in the generality in which it is
stated (i.e. when $G$ is an arbitrary unimodular Lie group); however
we will need a stronger statement about escaping an `escapable
set'. I don't know the most general description of which sets should be
escapable, but we should at least manage to escape orbits
in $\ggm$ of points under a compact normal subgroup. Also we need to
be able to escape small pieces of orbits under the group giving the
neutral direction to 
the flow. Theorem
\ref{thm: C} should be proved (and stated in the introduction) in the
generality stated in the beginning of \S6. A theorem using entropy a
la Einsiedler-Lindenstrauss 
should be stated and proved in the following generality: suppose $G$ is a Lie
group, $\Gamma$ is a lattice, $F$ a one-parameter non quasiunipotent
group and $x \in \ggm$ such that $Fx$ is not dense; then $\dim \cl{Fx}
< \dim \ggm.$ 

We will prove the following strengthening of the Margulis conjecture:
for any lattice $\Gamma$ in a connected Lie group $G$, any
non-quasiunipotent $F$, any countable $\Omega \subset \ggm$, the set
of $z \in \ggm$ for which:
\begin{itemize}
\item[(i)]
 $Fx$ is bounded,
\item[(ii)] $\cl{Fx} \cap \Omega =
\varnothing$
\item[(iii)] and $\dim \cl{Fx} < \dim \ggm$
\end{itemize}
 has the same \hd\  as
$\ggm$. 

Note that here $Fx$ is the two-sided orbit!

To deduce this statement, first show by a general theorem involving
slicing and splitting into expanding, contracting and neutral, that it
is enough to treat one-sided orbits. I think here it will be important
to be able to escape bounded pieces of orbits of points under the
subgroup giving the neutral direction to the flow. So we have
reduced to a one-parameter case, for which we 
argue as follows: case I:
$\Gamma$ is cocompact; then (i) is automatic and (ii) implies (iii),
so that the statement follows from Theorem \ref{thm: B}. case II:
$\Gamma$ is non-uniform. First show that the statement of the theorem
is undisturbed by replacing $\Gamma$ with a commensurable
lattice. After doing this, there is a semisimple $G_0$ with a
lattice $\Gamma_0$ and a homomorphism $\phi: G \to G_0$ such that
$\phi(\Gamma) = \Gamma_0$ and the induced map $\ggm \to G_0/\Gamma_0$
has a compact fiber. The statement of the theorem then follows from
the corresponding one for $G_0/\Gamma_0$, since both properties (i)
and (ii) survive under the lift, and (iii) follows from either (i) or
(ii). So it remains to prove the theorem for $G_0/\Gamma_0$, and there
are three sub-cases. If $\Gamma_0$ is irreducible and $G_0$ is of
higher rank then one should show that 
it is possible to replace $G_0$ by the groups satisfying the
hypotheses as in \S6. 
Here it will be helpful to have escabality of orbits under a compact
normal subgroup. If $G$ has rank 1 then one should show that the
statement follows from Dani's work. If $G$ is reducible one should
show that the statement follows from the two previous cases. Please
check if you think these reductions will work! It will create
unnecessary additional work if we prove theorems in an insufficient
level of 
generality.}

}

\section{Compactness criteria}
\name{cc}
For a Lie group $G$ and its discrete subgroup $\Gamma$, we denote by  
$\goth{g}$ the Lie algebra of $G$ and by $\pi$ the quotient map $G\to \ggm$, $g\mapsto g\Gamma$.
In the next three sections we will take $G$ and $\Gamma$ as in 
Theorem \ref{thm: C}, that is, $G$ is a connected semisimple
centerfree Lie group without compact factors. It is well-known (see
e.g.\ \cite[Prop. 3.1.6]{Zimmer}) 
that under these assumptions, $G$ is isomorphic to the connected
component of the identity in the real points of an algebraic group
defined over $\Q$; in the sequel we will use this
algebraic structure without further comment. We also assume $\Gamma\subset G$ to be non-uniform
and irreducible. By the
Margulis Arithmeticity Theorem \cite[Cor. 6.1.10]{Zimmer}, 
\commargin{Barak: I changed the ref to Zimmer although I am not sure it is
better. Dima: I think it is OK.} either $\rank_\R(G) = 1$,
or $\Gamma$ is {\sl arithmetic\/}, i.e. $G(\Z)$ is
commensurable with $\Gamma$. 
In this section we review results generalizing Mahler's compactness criterion,
which give necessary and sufficient conditions for a subset of $\ggm$
to be precompact. We will discuss each of the above cases separately.


\subsection{Rank one spaces}
\name{rk1}
Let $\rank_{\R} (G) =1$. Let $P$ be a minimal $\R$-parabolic subgroup of
$G$, $U = \Radu(P)$ the unipotent radical of $P$,  
$d=\dim U$,  $V = \bigwedge^d
\goth{g}$. Denote \eq{def rho rk1}{\rho \df { \textstyle\bigwedge^d}
\Ad : G \to \GL(V)\,,} 
and choose a nonzero element $\mathbf{p}_U$ in the 
one-dimensional subspace $\bigwedge^d \Lie(U)$ of $V$. Also 
fix a norm $\| \cdot \|$ on $V$. 
Then we have
\begin{prop}[Compactness criterion, rank one case]\name{prop: cc, rank1}
There is a finite $C \subset G$ such that the following hold:
\begin{enumerate}
\item
For any $c \in C$, $\rho(\Gamma c^{-1})\pu$ is discrete.
\item 
For any $L \subset G$, $\pi(L) \subset \ggm$ is precompact if and only
if there is $\vre>0$ such that for all $\gamma \in \Gamma$, $c \in C$, $g
\in L$
one has $\|\rho(g\gamma c^{-1})\mathbf{p}_U \| \geq \vre.$
\item
There exists $\vre_0$ such that for any $g \in G$, there is at most
one $\vv \in \rho(\Gamma C^{-1})\mathbf{p}_U$ such that $\|\rho(g)\vv\| <
\vre_0$. 
\end{enumerate}
\end{prop}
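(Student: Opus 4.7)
The statement is a version of Mahler's compactness criterion detecting divergence in $\ggm$ via the representation $\rho$; the finite set $C$ will parametrize the (finitely many) cusps of the non-uniform rank-one lattice $\Gamma$. Concretely, I would pick $c_1,\dots,c_k \in G$ so that $\{c_iPc_i^{-1}\}$ is a full set of $\Gamma$-conjugacy class representatives of minimal $\R$-parabolic subgroups of $G$ stabilizing cusps of $\ggm$; equivalently, $\Gamma \cap c_iUc_i^{-1}$ is a cocompact lattice in $c_iUc_i^{-1}$ for each $i$. Set $C \df \{c_1,\ldots,c_k\}$. This is possible by the classical reduction theory for non-uniform rank-one lattices.

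For (1), I would first determine $\mathrm{Stab}_G(\pu)$: nilpotency of $\Lie(U)$ implies $U$ fixes $\pu$ (since $\mathrm{ad}(X)|_{\Lie(U)}$ is traceless for $X\in\Lie(U)$), while $P = N_G(U)$ acts on the line $\R\pu$ by the modular character, which in rank one is trivial on a compact $M \subset P/U$ and nontrivial on the one-dimensional split torus $A$. Hence $\mathrm{Stab}_G(\pu) = MU$. Discreteness of $\rho(\Gamma c^{-1})\pu = \rho(\Gamma)\bigl(\rho(c^{-1})\pu\bigr)$ then follows from the general principle that a $\Gamma$-orbit on $G/H$ is closed and discrete whenever $\Gamma \cap H$ is a lattice in $H$, applied to $H = cMUc^{-1}$.

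For (2), the ``only if'' direction uses Siegel-set reduction theory: if $\pi(g_n) \to \infty$ in $\ggm$, then after choosing suitable $\gamma_n \in \Gamma$ and some $c_i \in C$, one has $g_n\gamma_n c_i^{-1} = k_n a_{t_n} u_n$ with $k_n$ in a compact set $K \subset G$, $u_n \in U$ bounded, and $a_{t_n}$ escaping to infinity in the positive Weyl chamber; since $\rho(a_t)\pu = e^{-\chi(\log a_t)}\pu$ for a strictly positive character $\chi$, and $K, U$ act boundedly on $\pu$, one has $\|\rho(g_n\gamma_n c_i^{-1})\pu\| \to 0$. The converse follows from (1): for $\pi(g)$ in a fixed compact set, $\min_{\gamma, c}\|\rho(g\gamma c^{-1})\pu\|$ is attained (by discreteness), is positive, and depends continuously on $\pi(g)$, hence is uniformly bounded below on the compact set.

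Part (3) is a quantitative ``separation of cusps.'' If distinct $\vv_1, \vv_2 \in \rho(\Gamma C^{-1})\pu$ both satisfied $\|\rho(g)\vv_i\| < \vre_0$ for sufficiently small $\vre_0$, then applying the cusp-depth interpretation from (2) to each $\vv_i$ would place $g$ simultaneously deep in two different cusp horoball neighborhoods of $\ggm$, contradicting the fact that sufficiently small horoball neighborhoods of distinct cusps are pairwise disjoint. The main obstacle will be upgrading this topological disjointness to a uniform quantitative $\vre_0$ valid for every $g \in G$; I would handle it by exploiting the rank-one structure (the unique contracting direction $\R\pu$ of $\rho(a_t)$ as $t\to\infty$), which forces two simultaneously $\rho(g)$-short orbit vectors to lie in nearly the same line in $V$, and then using the discreteness from (1) to rule out this coincidence once $\vre_0$ is small enough.
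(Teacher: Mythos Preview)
Your outline is essentially the paper's proof: both rest on Garland--Raghunathan reduction theory for non-uniform rank-one lattices, choosing $C$ to index the cusps, deducing (1) from the fact that $c\Gamma c^{-1}\cap U$ is a lattice in $U$ (the paper cites a Dani--Margulis closed-orbit result, exactly your ``general principle''), and proving (2) by Siegel-set decomposition in one direction and the discreteness of (1) in the other. Two bookkeeping remarks: your convention ``$\Gamma\cap c_iUc_i^{-1}$ is a lattice'' is inverse to what is needed for the orbit $\rho(\Gamma c^{-1})\pu$, whose point-stabilizer is $c^{-1}MUc$, so your $C$ should be replaced by $C^{-1}$; and your ``only if''/``converse'' labels in (2) are swapped relative to the statement.

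For (3) your first idea is the right one and is precisely what the paper does: the Garland--Raghunathan theorem already supplies the quantitative separation of Siegel sets (if $KA_s\Sigma' c_1\gamma_1$ meets $KA_\eta\Sigma' c_2\gamma_2$ with $s$ large then $c_1=c_2$ and $\rho(c_1\gamma_1\gamma_2^{-1}c_2^{-1})$ fixes $\pu$), and the paper simply translates ``$\|\rho(g\gamma^{-1}c^{-1})\pu\|<\vre_0$'' into ``$g\in KA_s\Sigma' c\bar\gamma\gamma$'' via Iwasawa decomposition. So your perceived obstacle --- upgrading topological horoball disjointness to a uniform $\vre_0$ --- is not actually an obstacle: the input is already uniform. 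Your proposed alternative route (two short $\rho(g)\vv_i$ must lie on ``nearly the same line'', then invoke discreteness) is unnecessary and, as written, not correct: a short $\rho(g)\vv$ lies in a small neighborhood of $0$ along the cone $\R_{>0}\,\rho(K)\pu$, which is not a line, so two such vectors need not be nearly proportional. Drop that paragraph and simply quote the Garland--Raghunathan separation property.
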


\begin{proof}
This result, essentially due to H.\,Garland 
 and M.S.\,Raghunathan, is
not usually stated in this representation-theoretic language. For the reader's
convenience we indicate here how to deduce the Proposition from
\cite{GR}. \commargin{Barak: I expanded the proof, see what 
you think. There is also a historical issue here - \cite{GR} rely essentially on
previous results of Selberg and Kazhdan-Margulis. But I have not
touched on this, from our point of view it is prehistory, almost like
debating Euclid vs. Archimedes. Dima: OK with me.}

Let $K$ be a maximal compact subgroup of $G$, let $A$ be a maximal
$\R$-split torus contained in $P$  
(which is one-dimensional in this case), let $\alpha$ be the
nontrivial character on $A$ such that $\rho(a)\pu = e^{-\alpha(a)}
\pu$, and let $$A_\eta = \{a \in A: \alpha(a) \geq \eta\}\,.$$
According to \cite[Thm. 0.6]{GR}, there is a finite
$C \subset G$, a compact subset $\Sigma \subset
U$ and $\eta \in \R$ such that 
\begin{itemize}
\item[(i)]
For any $c \in C$, $c \Gamma c^{-1} \cap U$ is a lattice in $U$.
\item[(ii)]
$
G = KA_\eta \Sigma C\Gamma \, .
$
\item[(iii)]
Given a compact $\Sigma' \supset \Sigma$, there is $s>0$ such that, if $c_1, c_2
\in C$ and $\gamma_1, \gamma_2 \in \Gamma$ are such that $KA_s \Sigma' c_1
\gamma_1  \cap KA_\eta \Sigma' c_2 \gamma_2 \neq \varnothing$ then $c_1 = c_2$ and
$\rho(c_1 \gamma_1 \gamma_2^{-1} c_2^{-1})$ fixes $\mathbf{p}_U$.
\end{itemize}

Assertion (1) follows from (i) and a result of Dani and Margulis
\cite[Thm. 3.4.11]{handbook} applied to the lattice $c \Gamma c^{-1}$. 
The direction $\implies$ in (2) is immediate from (1). To deduce
$\Longleftarrow$, note that if $g_n \in L$ is written as $g_n = k_n
a_n \sigma_n c_n \gamma_n$ as in (ii)
, and $\pi(g_n) \to \infty$ in $\ggm$, then
necessarily $\alpha(a_n) \to \infty$. Take an infinite subsequence for
which $c_n =c$ is constant and $k_n \to k_0$. Applying $\rho(g_n)$ to
$\vv_n \df \rho(\gamma_n^{-1} c^{-1})\mathbf{p}_U$ and using the fact that
$\rho(\sigma_n) \pu = \pu$ and $K$ is compact, we see that
$$\rho(g_n)\vv_n = \rho(k_na_n)\pu = e^{-\alpha(a_n)}\rho(k_n)\pu \to
0.$$
This proves (2). 

Now let $\Sigma' \supset \Sigma$ be a compact subset of $U$ which
contains a fundamental domain for the lattices $U \cap c\Gamma c^{-1}$
for each $c \in C$. Given $g \in G, c \in C, \gamma \in \Gamma$ we can use Iwasawa
decomposition to write
$$
g \gamma^{-1} c^{-1} = ka_tn'\bar{n},
$$
where $n' \in \Sigma', \bar{n} \in U \cap c\Gamma c^{-1}, k \in K$, and
$a_t \in A$. With no loss of generality assume the norm $\| \cdot \|$
on $V$ is $\rho(K)$-invariant. Since 
$$\|\rho(g \gamma^{-1} c^{-1})\mathbf{p}_U\| = 
\|\rho(ka_tn'\bar{n}) \mathbf{p}_U\| =\|\rho(ka_t) \mathbf{p}_U\| =
e^{-\alpha(t)} \| \mathbf{p}_U\|, 
$$
there exists $\vre_0$ such that if 
\eq{eq: small vector}{
\|\rho(g
 \gamma^{-1} c^{-1})\mathbf{p}_U\|
< \vre_0}
 then $t \geq s$ where $s$ is as in (iii). Writing $\bar{n} = c
\bar{\gamma} c^{-1}$ we see that $g \in K A_s \Sigma' \bar{n} c \gamma = K A_s
\Sigma' c \bar{\gamma} \gamma.$ From (iii) it follows that for a given
$g$, \equ{eq: small vector} determines $c \in C$ uniquely. Using (iii)
again, and the fact that $\rho(\bar{n})$ fixes $\mathbf{p}_U$, we
obtain (3). 
\end{proof}

\subsection{Arithmetic homogeneous spaces}
\name{arithm} Here we assume that $\Gamma$ is commensurable with the 
group of integer points of $G$, and equip $\goth{g}$ with a
$\Q$-structure which is $\Ad(\Gamma)$-invariant. Fix some rational 
basis for $\g$ and denote its $\Z$-span by  $\goth{g}_{\Z}$.
Let $P_1, \ldots, P_r$ be the maximal $\Q$-parabolic subgroups
containing a fixed minimal $\Q$-parabolic subgroup. Then it is known
\cite{Borel} that $r$ is equal to $\rank_{\Q} (G)$, i.e.\ to the dimension of a
maximal $\Q$-split torus in $G$. For $j=1, \ldots, r$ let
$\mathcal{P}_j$ denote the set of all conjugates of $P_j$ and let
$\mathcal{P} = \bigcup_{j} \mathcal{P}_j.$   
A subset $M \subset
\goth{g}$ is called {\sl horospherical\/} if it is a minimal subset
(with respect to inclusion) such that $\spa(M) = \Lie\big(\Radu(P)\big)$ for
some $P \in \mathcal{P}$.

The following is a useful criterion proved in \cite[Prop. 3.5]{with
george}.

\begin{prop}[Compactness criterion, arithmetic case]

\name{prop: cc, semisimple}
For any $L \subset G$, $\pi(L) \subset \ggm$ is precompact if and only
if there is a
neighborhood $W$ of $0$ in $\goth{g}$ such that for all $g \in L$, 
$ W
\cap \Ad(g)\goth{g}_\Z$ 
does not contain a horospherical subset.  
\end{prop}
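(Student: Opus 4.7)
The plan is to combine three standard inputs from reduction theory for the arithmetic lattice $\Gamma$: (i) $\Ad(\Gamma)$-invariance of $\goth{g}_\Z$; (ii) the Borel--Harish-Chandra theorem applied to the unipotent $\Q$-group $\Radu(P)$, which gives that $\Lie(\Radu(P))\cap\goth{g}_\Z$ is a full-rank $\Z$-lattice in $\Lie(\Radu(P))$ for every $\Q$-parabolic $P\subset G$; and (iii) a Siegel-set description of a fundamental set for $\Gamma$ attached to a minimal $\Q$-parabolic $P_0=MA\Radu(P_0)$, together with the standard fact that $\pi(g_n)\to\infty$ in $\ggm$ forces, after passing to a subsequence, some simple $\Q$-root $\alpha$ of $(P_0,A)$ to satisfy $\alpha(a_n)\to\infty$. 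Throughout I will use that $\Ad(G)$ permutes each class $\mathcal{P}_j$, so the notion of horospherical subset is $\Ad(G)$-invariant.

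For the forward implication, I would suppose $\pi(L)$ is precompact, take a compact $K'\subset G$ with $L\subset K'\Gamma$, and factor each $g\in L$ as $g=k_g\gamma_g$ with $k_g\in K'$, $\gamma_g\in\Gamma$, so that $\Ad(g)\goth{g}_\Z=\Ad(k_g)\goth{g}_\Z$. Setting $c\df\lambda_1(\goth{g}_\Z)>0$ equal to the length of the shortest nonzero vector of the $\Z$-lattice $\goth{g}_\Z\subset\goth{g}$, compactness of $K'$ lets me pick a neighborhood $W$ of $0$ with $\Ad(k)^{-1}W\subset\{\|v\|<c\}$ for every $k\in K'$. Any horospherical $M\subset\Ad(g)\goth{g}_\Z\cap W$ would then give, via $\Ad(k_g)^{-1}$, a horospherical subset of $\goth{g}_\Z$ lying in $\{\|v\|<c\}$; but every horospherical subset is linearly independent, hence consists of nonzero lattice vectors, each of norm at least $c$, a contradiction.

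For the converse I would argue by contradiction: assume $\pi(L)$ is not precompact and choose $g_n\in L$ with $\pi(g_n)\to\infty$. Reduction theory lets me write $g_n=k_na_n\omega_nf_n\gamma_n$ with $k_n$ in a fixed compact subset of $G$, $\omega_n$ in a fixed compact subset of $P_0$, $f_n$ in a finite set $F\subset G(\Q)$, and $\gamma_n\in\Gamma$, and with some simple $\Q$-root $\alpha$ satisfying $\alpha(a_n)\to\infty$ along a subsequence on which $f_n=f$ is constant. Let $P_\alpha\supset P_0$ be the maximal $\Q$-parabolic obtained by suppressing $\alpha$; then $\Lie(\Radu(P_\alpha))$ is a sum of root spaces all contracted by $\Ad(a_n)$. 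I would take a $\Z$-basis $u_1,\dots,u_d$ of the rank-$d$ lattice $\Ad(f)^{-1}\Lie(\Radu(P_\alpha))\cap\goth{g}_\Z$ (of full rank because $f\in G(\Q)$ conjugates $P_\alpha$ to another $\Q$-parabolic), set $v_i^{(n)}\df\Ad(\gamma_n)^{-1}u_i\in\goth{g}_\Z$, and compute
$$
\Ad(g_n)v_i^{(n)}=\Ad(k_n)\Ad(\omega_n)\Ad(a_n)\Ad(f)u_i.
$$
Since $\omega_n\in P_0\subset P_\alpha$ normalizes $\Radu(P_\alpha)$, the restriction of $\Ad(\omega_n)$ to $\Lie(\Radu(P_\alpha))$ is uniformly bounded; combined with contraction of $\Ad(a_n)$ on $\Lie(\Radu(P_\alpha))$ and boundedness of $\Ad(k_n)$, this forces $\Ad(g_n)v_i^{(n)}\to 0$. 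Meanwhile $\{\Ad(g_n)v_i^{(n)}\}$ is a basis of $\Lie(\Radu(k_nP_\alpha k_n^{-1}))\in\mathcal{P}_{j(\alpha)}$, so it is a horospherical subset of $\Ad(g_n)\goth{g}_\Z$ eventually contained in any prescribed $W$, contradicting the hypothesis.

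The hardest part lies in the converse direction: one must invoke reduction theory to translate the abstract divergence $\pi(g_n)\to\infty$ into the concrete statement that some simple $\Q$-root satisfies $\alpha(a_n)\to\infty$ along a subsequence. In higher $\Q$-rank this requires properness of $\pi$ on each Siegel set and some bookkeeping across the finite coset representatives $F$, while in $\Q$-rank one the statement can be recovered from Proposition \ref{prop: cc, rank1}. The forward implication, by contrast, is comparatively soft and relies only on the elementary bound $\lambda_1(\goth{g}_\Z)>0$ together with the $\Ad(G)$-equivariance of the family $\mathcal{P}_j$.
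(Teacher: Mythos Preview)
The paper does not supply its own proof of this proposition; it simply records it as \cite[Prop.\ 3.5]{with george}. Your outline is a correct sketch of the standard reduction-theoretic argument and is essentially the proof given in that reference: the forward direction uses only discreteness of $\goth{g}_\Z$ together with $\Ad(\Gamma)$-invariance, while the converse uses a Siegel-set decomposition to locate, for a divergent sequence, a maximal $\Q$-parabolic whose unipotent radical is contracted by $\Ad(a_n)$.

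Two small points to clean up. First, from $g_n = k_n a_n \omega_n f \gamma_n$ one obtains
\[
\Ad(g_n)v_i^{(n)} = \Ad(k_n)\,\Ad(a_n)\,\Ad(\omega_n)\,\Ad(f)\,u_i,
\]
with $a_n$ and $\omega_n$ in the opposite order from what you wrote; the argument is unaffected because $\omega_n\in P_0\subset P_\alpha$ normalizes $\Radu(P_\alpha)$ and stays in a compact set, so $\Ad(\omega_n)\Ad(f)u_i$ remains bounded inside $\Lie(\Radu(P_\alpha))$ and is then contracted by $\Ad(a_n)$. Second, to get contraction on \emph{all} of $\Lie(\Radu(P_\alpha))$ (not merely on the $\alpha$-root space) you should use that in the Siegel domain every simple $\Q$-root is bounded below, so each positive root $\beta$ with nonzero $\alpha$-coefficient satisfies $\beta(a_n)\to\infty$. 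With these adjustments your argument is complete and matches the approach of \cite{with george}.
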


For $j=1, \ldots, r$ let 
$U_j = \Radu(P_j),\ \goth{u}_j = \Lie(U_j), \ d_j= \dim U_j$, 
and define 
\eq{defn rho}{V_j \df
{\textstyle\bigwedge}^{d_j} \goth{g}\ \text{ and }  \ \rho_j \df  
{\textstyle\bigwedge}^{d_j} \Ad : G \to \GL(V_j)\,.}  Let $\mathcal{R}_j$ be
the 
set of all unipotent radicals of maximal
$\Q$-parabolics conjugate to $P_j$, and let $\mathcal{R} = \bigcup_j
\mathcal{R}_j$  (note that $\mathcal{R}$ contains  only $\Q$-elements,
so is a proper subset of the set of unipotent radicals of elements of
$\mathcal{P}$). For each $U \in \mathcal{R}_j$, let
$\mathbf{p}_U = \vu_1 \wedge \cdots \wedge \vu_{d_j} \in V_j$,
where $\vu_1, \ldots, \vu_{d_j} \in \goth{g}_{\Z}$ form a basis for
the $\Z$-module $\Lie(U) \cap \goth{g}_{\Z}$
($\mathbf{p}_U$ is uniquely determined up to a sign). Since
$\goth{g}_{\Z}$ is discrete, 
\eq{later}{
\inf_{U \in \mathcal{R}} \| \p_U\|>0.
}
%
Note that if $g \in G$ and $M\subset
\Ad(g)\goth{g}_\Z$ is horospherical, and we denote
$$U(M,g) \df\text{ the group with Lie algebra spanned by } \Ad(g)^{-1} M\,,$$ 
then $U(M,g) \in
\mathcal{R}$.
Moreover, for any $\vre>0$ and any $j$
there is a neighborhood $W'$ of $0$ in $\goth{g}$ such that if 
some subset of $W' \cap \Ad(g)\goth{g}_\Z$  spans $\Ad(g)\goth{u}$, where
$\goth{u} = \Lie(U), U \in \mathcal{R}_j$, then $\|\rho_j(g)\mathbf{p}_U
\| < \vre$. From this and Proposition \ref{prop: cc, semisimple} 
one finds:

\begin{cor}\name{cor: cc with reps}
Let $L\subset G$ and let $W$ be a neighborhood of $0$ in
$\goth{g}$. Suppose that there is $\vre>0$ such that for every $g \in L$ and
every horospherical $M$
contained in $W \cap \Ad(g)\goth{g}_\Z
$, 
$$\|\rho_j (g) \mathbf{p}_U \| \geq \vre,$$
where 
$U=U(M,g) \in \mathcal{R}_j.$ Then $\pi(L) \subset
\ggm$ is precompact. 
\end{cor}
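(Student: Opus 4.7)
The plan is to combine the abstract compactness criterion of Proposition \ref{prop: cc, semisimple} with the observation recorded in the paragraph immediately preceding the Corollary, which I will denote $(\star)$: for each $\vre>0$ and each $j \in \{1,\ldots,r\}$, there is a neighborhood $W_j'$ of $0$ in $\goth{g}$ such that whenever some subset of $W_j' \cap \Ad(g)\goth{g}_\Z$ spans $\Ad(g)\Lie(U)$ for some $U \in \mathcal{R}_j$, necessarily $\|\rho_j(g)\pu\| < \vre$. The key feature of $(\star)$ is that the neighborhood $W_j'$ can be chosen uniformly in $g \in G$, since the estimate reflects only the continuity of the wedge product of small vectors, together with the uniform lower bound \equ{later} on $\|\pu\|$.

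Given the hypotheses of the Corollary with parameters $W$ and $\vre$, I would apply $(\star)$ with this same $\vre$ to each $j \in \{1,\ldots,r\}$, obtain neighborhoods $W_1',\ldots,W_r'$, and set
\[
W'' \df W \cap \bigcap_{j=1}^{r} W_j',
\]
which is still a neighborhood of $0$ because the intersection is finite. My claim is that $W''$ witnesses the criterion of Proposition \ref{prop: cc, semisimple} applied to $L$, whence $\pi(L)$ is precompact. Indeed, suppose for contradiction that some $g \in L$ admits a horospherical subset $M \subset W'' \cap \Ad(g)\goth{g}_\Z$. By the definition of horospherical, $M$ is a minimal spanning set for $\Ad(g)\Lie(U)$ for $U = U(M,g) \in \mathcal{R}_j$ and the appropriate $j$. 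Since $M \subset W$, the standing hypothesis of the Corollary forces $\|\rho_j(g)\pu\| \geq \vre$; since simultaneously $M \subset W_j'$, the property $(\star)$ forces $\|\rho_j(g)\pu\| < \vre$, a contradiction.

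The argument is essentially formal once $(\star)$ and Proposition \ref{prop: cc, semisimple} are in hand, so there is no substantive obstacle within the Corollary itself. The only point requiring care is the uniformity of $W''$ over $g \in L$, which is guaranteed by the uniformity built into $(\star)$ and by the finiteness of the set $\{1,\ldots,r\}$ of $\Q$-conjugacy classes of maximal parabolics. The analytic content of the Corollary therefore lies entirely in the preparatory statement $(\star)$, while the Corollary itself amounts to its contrapositive repackaging through Proposition \ref{prop: cc, semisimple}.
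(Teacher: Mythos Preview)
Your proposal is correct and is precisely the argument the paper has in mind: the paper itself offers no proof beyond the phrase ``From this and Proposition \ref{prop: cc, semisimple} one finds,'' and your write-up is the natural unpacking of that sentence --- intersect $W$ with the finitely many neighborhoods $W_j'$ supplied by the preceding observation, and derive a contradiction from any putative horospherical $M$ in the resulting $W''$.
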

We will say that a linear subspace of $\goth{g}$ is {\em unipotent} if
it is contained in the Lie algebra of a unipotent subgroup. Clearly
any subspace of a unipotent subspace is also unipotent.

\begin{prop}[\cite{with george}, Prop. 3.3]

\name{prop: algebra unipotent}
There is a neighborhood $W^{(0)}$ of $0$ in $\goth{g}$ such that for
any $g \in G$, the span of $\Ad(g)\goth{g}_\Z 
\cap W^{(0)}$ is unipotent.  
\end{prop}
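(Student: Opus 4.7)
The plan is to prove the contrapositive via the Plücker embedding: we show that short vectors in $\Ad(g)\goth{g}_\Z$ cannot span a non-unipotent subspace, because the Plücker vector of such a subspace would have to escape to $0$ under the $G$-action, which we will rule out by algebraic group theory plus reduction theory.

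The first step is the translation. Given $g \in G$ and linearly independent $v_1, \ldots, v_k \in \Ad(g)\goth{g}_\Z \cap W^{(0)}$, write $v_i = \Ad(g) w_i$ with $w_i \in \goth{g}_\Z$. Then $V_\Q \df \R\text{-span}(w_1, \ldots, w_k)$ is a rational subspace of $\goth{g}_\Q$, and the Plücker vector $\mathbf{p}_{V_\Q} \in \bigwedge^k \goth{g}$ taken with respect to a $\Z$-basis of $V_\Q \cap \goth{g}_\Z$ is a primitive integer vector, so $\|\mathbf{p}_{V_\Q}\| \geq c_1 > 0$ by discreteness of $\bigwedge^k \goth{g}_\Z$. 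Up to an integer scalar, $v_1 \wedge \cdots \wedge v_k = \bigwedge^k\Ad(g)\, \mathbf{p}_{V_\Q}$, so if $W^{(0)}$ has radius $\vre$, we obtain a bound
$$\|{\textstyle\bigwedge}^k\Ad(g)\,\mathbf{p}_{V_\Q}\| \leq C_k \vre^k.$$
Since being contained in the Lie algebra of a unipotent subgroup is an $\Ad(G)$-invariant property, $\Ad(g)V_\Q$ is unipotent if and only if $V_\Q$ is.

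The second step is the key algebraic fact, which I would use as a black box or give a short proof via Hilbert–Mumford: \emph{for a rational subspace $V_\Q \subset \goth{g}_\Q$, the orbit closure $\overline{\bigwedge^k \Ad(G)\, \mathbf{p}_{V_\Q}}$ contains $0$ if and only if $V_\Q$ is unipotent}. The easy direction: if $V_\Q \subset \Lie(U)$ for a unipotent subgroup contained in $\Radu(P)$ for some parabolic $P$, then any strictly dominant one-parameter subgroup of a Levi of $P$ contracts $\mathbf{p}_{V_\Q}$ to $0$. The other direction uses the Hilbert–Mumford criterion: if $\mathbf{p}_{V_\Q}$ is unstable, some $\R$-split one-parameter subgroup $\{a_t\}$ drives it to $0$, forcing $V_\Q$ into the strictly negative weight space of $\Ad(a_t)$, which is the Lie algebra of a unipotent subgroup.

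The third and most subtle step is the \emph{uniform} lower bound. Given the previous step, for each non-unipotent rational $V_\Q$ the orbit $\bigwedge^k \Ad(G)\,\mathbf{p}_{V_\Q}$ is closed and bounded away from $0$; I need a single $\delta > 0$ such that $\inf_{g \in G} \|\bigwedge^k \Ad(g)\,\mathbf{p}_{V_\Q}\| \geq \delta$ uniformly over \emph{all} non-unipotent rational $V_\Q$. This is the main obstacle, since a priori different non-unipotent subspaces lie on different closed $G$-orbits whose infima could in principle accumulate at $0$. I would establish it in two pieces: (i) for each fixed $G(\R)$-orbit $\mathcal{O}$ of rational Plücker points, $\inf_\mathcal{O} \|\cdot\|$ is strictly positive; (ii) as $V_\Q$ varies, the values $\mathbf{p}_{V_\Q}$ lie in the $\Ad(\Gamma)$-invariant lattice $\bigwedge^k \goth{g}_\Z$, and reduction theory for the reductive stabilizer of each non-unipotent $\mathbf{p}_{V_\Q}$ (Borel–Harish-Chandra) shows that the minimum of $\|\cdot\|$ on the $G$-orbit is achieved at a point whose norm is bounded below by a constant times the integer length of a primitive representative. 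Combining with the lower bound $\|\mathbf{p}_{V_\Q}\| \geq c_1$, one obtains the desired uniform $\delta$. Then choosing $W^{(0)}$ small enough that $C_k\vre^k < \delta$ for all $k\leq \dim G$ yields the result.
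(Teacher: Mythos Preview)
The paper does not prove this proposition; it quotes it from Tomanov--Weiss \cite{with george}. So there is no ``paper's own proof'' to compare against, only the question of whether your argument is sound.

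Your Step~2 is false, and this is a genuine gap that invalidates the whole strategy. The claim is that a subspace $V \subset \goth{g}$ is unipotent if and only if $0$ lies in the closure of $\bigwedge^k\Ad(G)\,\mathbf{p}_V$. The forward direction is fine, but the converse fails. Take $G = \SL_2(\R)$ and $V = \spa(E_{21}, H)$ with $H = \diag(1,-1)$. Under the one-parameter subgroup $a_t = \diag(e^t, e^{-t})$ one has $\Ad(a_t)E_{21} = e^{-2t}E_{21}$ and $\Ad(a_t)H = H$, hence
\[
{\textstyle\bigwedge^2}\Ad(a_t)\,(E_{21}\wedge H) = e^{-2t}\,E_{21}\wedge H \longrightarrow 0,
\]
so $\mathbf{p}_V$ is unstable; yet $V$ contains the semisimple element $H$ and is therefore not unipotent. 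The flaw is in the sentence ``forcing $V_\Q$ into the strictly negative weight space'': the Pl\"ucker vector having negative total weight is strictly weaker than the subspace itself lying in the negative weight space. Since such non-unipotent rational $V$ with unstable $\mathbf{p}_V$ exist, the uniform lower bound sought in Step~3 is simply not available, and the argument collapses.

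What actually makes the proposition work is not GIT on Pl\"ucker vectors but $\Ad$-invariant polynomials applied to \emph{individual} lattice vectors together with closure of a small ball under the bracket. Concretely: the coefficients of the characteristic polynomial of $\ad(X)$ are $\Ad$-invariant and take discrete values on $\goth{g}_\Z$, so any $X \in \goth{g}_\Z$ with $\Ad(g)X$ sufficiently small must be ad-nilpotent. If $W^{(0)}$ is a small enough ball then $[W^{(0)}, W^{(0)}] \subset W^{(0)}$, so (after arranging $[\goth{g}_\Z,\goth{g}_\Z]\subset\goth{g}_\Z$) the set $\Ad(g)\goth{g}_\Z \cap W^{(0)}$ is bracket-closed; iterated brackets then shrink geometrically and, by discreteness of the lattice, eventually vanish. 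This forces the span to be a nilpotent subalgebra consisting of nilpotent elements, hence contained in the nilradical of a parabolic. Your Pl\"ucker route bypasses exactly this interplay between invariants on single vectors and bracket closure, and cannot be repaired without it.
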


We will need the following consequence of \cite[Prop. 5.3]{with george}:

\begin{prop}\name{prop: same parabolic}
Suppose for some $j \in \{1, \ldots, r\}$ that
$U = \Radu(P)$ and $U' =
\Radu(P')$, where $P,P' \in \mathcal{P}_j$. Writing $\goth{u} = \Lie(U),
\goth{u}' = \Lie(U')$, suppose further that $\spa ( \goth{u}, \goth{u}')$ is 
unipotent. Then $P= P'$ and $\goth{u} = \goth{u}'$. In
particular, for any unipotent $\goth{v} \subset \goth{g}$,   
$$\# \left\{U \in \mathcal{R}: \Lie(U)  \subset \goth{v} \right\} \leq r.
$$ 
\end{prop}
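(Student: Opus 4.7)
The plan is to prove the first assertion by reducing to a rigidity statement about unipotent radicals of conjugate maximal $\Q$-parabolics, and then derive the counting bound as an easy consequence.

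First I would observe that the hypothesis that $\spa(\goth{u},\goth{u}')$ is unipotent means, by definition, that $\goth{u}+\goth{u}'\subset\Lie(W)$ for some unipotent subgroup $W\subset G$. Embedding $W$ into the unipotent radical of an $\R$-parabolic $Q$, I may assume $\goth{u},\goth{u}'\subset\Lie\big(\Radu(Q)\big)$. Since $U$ and $U'$ are the unipotent radicals of maximal $\Q$-parabolics $P,P'\in\mathcal P_j$, they are $G$-conjugate and in particular have equal dimension $d_j$. Using that a parabolic is self-normalizing, i.e.\ $P=N_G(U)$ and $P'=N_G(U')$, the proof reduces to showing that inside the nilpotent Lie algebra $\Lie(\Radu(Q))$ there is at most one subalgebra that is $G$-conjugate to $\goth{u}_j=\Lie(U_j)$ and arises as the Lie algebra of the unipotent radical of a maximal $\Q$-parabolic in $\mathcal P_j$. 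This rigidity statement is exactly the content of \cite[Prop.~5.3]{with george}, so I would cite it for the detailed algebraic argument, which proceeds by comparing the two resulting parabolic decompositions with respect to a common maximal torus contained in $Q$ and observing that any non-trivial Weyl-type relation would force the span $\goth{u}+\goth{u}'$ to contain a semisimple element, contradicting unipotency.

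For the second assertion, the plan is a direct reduction to the first. Suppose $\goth{v}\subset\goth{g}$ is unipotent and let $U_1,U_2\in\mathcal R$ be such that $\Lie(U_i)\subset\goth{v}$ for $i=1,2$. If both $U_1$ and $U_2$ belong to the same class $\mathcal R_j$, then
\[
\spa\big(\Lie(U_1),\Lie(U_2)\big)\subset\goth{v}
\]
is unipotent, so the first part of the proposition yields $U_1=U_2$. Hence each of the classes $\mathcal R_1,\ldots,\mathcal R_r$ contributes at most one element to $\{U\in\mathcal R:\Lie(U)\subset\goth v\}$, giving the bound $r$.

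The main obstacle is the algebraic rigidity underlying the first step: a priori nothing prevents two distinct conjugates of $P_j$ from having their unipotent radicals sit inside a common unipotent subalgebra. The proof of this rigidity requires working with the root-space decomposition relative to a maximal torus in a parabolic $Q$ containing both $U$ and $U'$, and using the fact that conjugate maximal $\Q$-parabolics determine the same subset of simple roots up to the Weyl group action; any discrepancy produces a positive and a negative root vector in $\goth{u}+\goth{u}'$, contradicting unipotency. Since this is precisely what is established in \cite[Prop.~5.3]{with george}, I would invoke that result rather than reproduce the argument here.
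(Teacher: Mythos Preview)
Your proposal is correct and matches the paper's approach: the paper simply states the proposition as a consequence of \cite[Prop.~5.3]{with george} without further argument, and you correctly identify that citation as the source of the rigidity in the first assertion. Your derivation of the ``In particular'' counting bound---at most one $U$ from each $\mathcal{R}_j$ can have $\Lie(U)\subset\goth{v}$, by the first part---is the natural deduction the paper leaves implicit, and it is sound.
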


\section{Expanding subgroups}
\name{section: F expanding groups}
In this section we define $(F^+,\Gamma)$-expanding
subgroups, and give some examples. 
Unifying the notation from the two cases
considered in \S\ref{cc}, 
we define a representation $\rho: G \to \GL(V)$ as in  \equ{def rho rk1}
in the rank one case, and in
the arithmetic case we let  $$V \df \bigoplus V_j\ \text{  and }\ \rho \df \bigoplus \rho_j\,,$$
where 
$V_j,\rho_j$ are as in \equ{defn rho}.

\smallskip

We will start by citing the following
\commargin{Barak: \cite{Borel} does not contain the statement I
wrote, but the statement can be deduced from what is in that book. Is
this enough? If not is there a better statement or 
should I write a proof?} 
\begin{prop}\name{prop: Jordan} \cite[\S4,
\S8]{Borel}
For any one-parameter subgroup $F = \{g_t\}$ of $G$ 
one can
find one-parameter subgroups $\{k_t\}, \{a_t\}$ and $\{u_t\}$ of $G$,
such that the following hold:
\begin{itemize}
\item
$g_t = k_t u_t a_t$ for all $t$. 
\item
All elements in all of the above one parameter subgroups commute. 
\item
$\{\Ad(k_t): t \in \R\}$ is bounded.
\item
$\Ad(a_t)$ is semisimple and $\Ad(u_t)$ is unipotent for all
$t$. \commargin{Dima: I don't now, can rely on your judgement. But I don't think it's a good idea
to call it Jordan decomposition over $\R$, couldn't find such a term anywhere. Also decided to call it a proposition and refer to it later in \S\ref{margconj}.}
\end{itemize}\end{prop}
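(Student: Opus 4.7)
The plan is to derive the proposition from the real Jordan decomposition in the semisimple Lie algebra $\g$, lifted to $G$ via the fact that $G$ is centerfree, so that $\Ad : G \to \Aut(\g)$ is injective. Since $\{g_t\}$ is a connected one-parameter subgroup of $G$, there is $X \in \g$ with $g_t = \exp(tX)$ for all $t$, and the question about $\{g_t\}$ is really a question about $X$.

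The key input is the following classical fact about semisimple real Lie algebras (see Borel \cite{Borel}, \S\S4,8, or Helgason): every $X \in \g$ admits a unique decomposition $X = X_e + X_h + X_n$ with $X_e, X_h, X_n \in \g$ such that $\ad(X_e)$ is semisimple with purely imaginary spectrum, $\ad(X_h)$ is semisimple with real spectrum, $\ad(X_n)$ is nilpotent, and the three elements commute pairwise in $\g$. I would set
\[
k_t \df \exp(tX_e), \qquad a_t \df \exp(tX_h), \qquad u_t \df \exp(tX_n).
\]
Commutativity of $X_e, X_h, X_n$ in $\g$ implies that the three one-parameter subgroups commute in $G$ (commuting Lie algebra elements exponentiate to commuting group elements), and the same fact gives $g_t = \exp(tX) = \exp(tX_e)\exp(tX_h)\exp(tX_n) = k_t u_t a_t$.

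The advertised properties are then read off from the adjoint action: $\Ad(k_t) = \exp(t\,\ad(X_e))$ lies in a compact (toral) subgroup of $\Aut(\g)$ since $\ad(X_e)$ has purely imaginary eigenvalues, so $\{\Ad(k_t) : t \in \R\}$ is bounded; $\Ad(a_t) = \exp(t\,\ad(X_h))$ is diagonalizable over $\R$ and hence semisimple for every $t$; and $\Ad(u_t) = \exp(t\,\ad(X_n))$ is unipotent because $\ad(X_n)$ is nilpotent.

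The only point requiring care is that the Jordan components $X_e, X_h, X_n$ actually lie in $\g$ rather than merely in $\End(\g)$, and that the resulting group elements truly live in $G$ (rather than a covering group or $\Aut(\g)$). The former is exactly the nontrivial semisimple Lie algebra input cited above, and the latter is automatic because $\g$ is the Lie algebra of $G$, so $\exp$ maps $\g$ into $G$. I expect this to be the only subtle step; once it is in hand, every other claim reduces to a direct computation with the Jordan form of $\ad(X)$.
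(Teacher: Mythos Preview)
The paper does not supply its own proof of this proposition; it simply cites Borel \cite[\S4,\S8]{Borel} and treats the statement as a standard structural fact about semisimple groups. Your outline is exactly the standard argument behind that citation: take the additive real Jordan decomposition $X = X_e + X_h + X_n$ in the semisimple Lie algebra (whose existence in $\g$, rather than merely in $\End(\g)$, is the content of the cited sections of Borel), and exponentiate. All four bullet points follow as you indicate. The only cosmetic slip is that you wrote $\exp(tX_e)\exp(tX_h)\exp(tX_n) = k_t u_t a_t$, whereas $k_t u_t a_t = \exp(tX_e)\exp(tX_n)\exp(tX_h)$; since everything commutes this is harmless.
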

Note that $\Ad(a_t)$ is nontrivial if and only if $g_t$ is not quasiunipotent,
which will be our standing assumption for this section. 

Let $T$ be a maximal $\R$-split torus of $G$ containing $\{a_t\}$ and let
$\mathcal{X}(T)$ be the set of $\R$-algebraic homomorphisms $T \to \R$. 
Let $\Psi $ be the set of weights for $\rho$. 
Then we have $V =  \bigoplus_{\lambda \in \Psi} V^{\lambda}$,
where $\Psi \subset \mathcal{X}(T)$ and 
$$V^{\lambda} \df \left\{\vv \in V : \forall g \in T, \  \rho(g)\vv = e^{ \lambda(g)} \vv \right \}$$ 
is nonzero for each $\lambda \in \Psi$. 
Since $G$ is semisimple, $\Psi = -\Psi$. 
We write
\[ 
V^> \df \bigoplus_{\lambda \in \Psi, \,  \lambda(a_1) >0} V^{\lambda},\ \ 
V^0 \df \bigoplus_{\lambda \in \Psi,\,  \lambda(a_1) =0} V^{\lambda}, \ \ 
V^< \df \bigoplus_{\lambda \in \Psi.\,  \lambda(a_1) < 0} V^{\lambda}\,, 
\]
and also let 
$V^\leq \df V^0 \oplus V^<$. 
For any other representation $\tau: G \to \GL(W)$ defined over $\R$ 
we will denote by $\Psi_\tau$ the set of weights for $\tau$, and similarly will write 
$ W^{\lambda}$ for $\lambda\in \Psi_\tau$, $W^>$, $W^\leq$, etc. 

Let $\mathcal{W}$ denote the union of the $G$-orbits of the vectors
representing the appropriate Lie algebras of unipotent radicals of
parabolic subgroups. I.e., in the rank one case, let $\mathcal{W} =
\rho(G)\mathbf{p}_U$, and in the arithmetic case, $\mathcal{W} =
\bigcup_{j=1}^r \rho(G) \mathbf{p}_{U_j}.$ Note that $\mathcal{W}$ is
a compact subset of $V$. Now let us say that a subgroup $H \subset G$
is {\sl $(F^+, \Gamma)$-expanding\/} if 
\eq{eq: defn expanding}{
\rho(H)\mathbf{p} \not \subset
V^\leq \ \ \text{for any }\mathbf{p} \in \mathcal{W}\,.
}

Having introduced this definition, we can state our main theorem:

\begin{thm}\name{thm: more general} Suppose 
$G$ is a connected centerfree semisimple Lie group with no compact factors,
 $\Gamma\subset G$ is an irreducible lattice, and $F$ is a non-\qu \
one-parameter subgroup. Let $H$ be an $(F^+,\Gamma)$-expanding
subgroup of $H^+$ normalized by $F$,
 and let $\FF$ be 
as in \equ{def phi}.
Then there exists $a' > 0$ such that for any $x\in \ggm$ and $a >
a'$, the set 
\eq{efh}{\{h\in H : hx \in  E(F^+,\infty)\}}
is  $a$-winning  for the MSG induced by $\FF$.
\end{thm}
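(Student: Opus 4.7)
The plan is to use the compactness criteria of \S\ref{cc} to convert ``$hx\in E(F^+,\infty)$'' into a quantitative ``avoid small vectors'' condition, and then exhibit an explicit winning strategy for Alice based on the $(F^+,\Gamma)$-expanding property. Fix a lift $\tilde x\in G$ of $x$. By Proposition \ref{prop: cc, rank1} in the rank-one case and Corollary \ref{cor: cc with reps} in the arithmetic case, boundedness of $\{g_t h\tilde x\Gamma:t\ge 0\}$ is equivalent to the existence of $\vre>0$ such that $\|\rho(g_th\tilde x)\p\|\ge\vre$ for every $t\ge 0$ and every $\p$ in an appropriate discrete $\Gamma$-invariant subset of $\mathcal W$ (a union of $\Gamma$-translates of vectors $\mathbf p_U$). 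Since $H\subset H^+$ is normalized by $F$, one has $g_th=\Phi_{-t}(h)g_t$, and the evolution of $\rho(g_t\tilde x)\p$ is controlled by the weight decomposition $V=V^>\oplus V^0\oplus V^<$ introduced in \S\ref{section: F expanding groups}.

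Alice's strategy proceeds stage by stage along the $\{\mathcal D_t\}$-MSG. At stage $k$ her ball $B_k\in\mathcal D_{t_k}$ has diameter $\asymp e^{-\sigma t_k}$, and she chooses $A_k\in\mathcal D_{t_k'}$ inside $B_k$ so as to guarantee $\|\rho(g_th^\ast\tilde x)\p\|\ge\vre$ for all $t$ in a window $I_k\df[t_k,t_{k+1}]$ and all relevant $\p$, where $h^\ast=\bigcap_j A_j$. Three ingredients support this. First, discreteness of the $\Gamma$-orbit together with the lower bound \equ{later} (or its rank-one analog from Proposition \ref{prop: cc, rank1}(3)) ensures that only a bounded number $N$ of dangerous vectors $\p$ can simultaneously be $\vre$-small as $(h,t)$ ranges over $B_k\times I_k$; in the arithmetic case this boundedness follows from Propositions \ref{prop: algebra unipotent} and \ref{prop: same parabolic}. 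Second, the hypothesis \equ{eq: defn expanding} says precisely that $\rho(H)\p\not\subset V^\le$ for every $\p\in\mathcal W$; composing with the exponential expansion of $\rho(g_t)$ on $V^>$, this means $\varphi_\p(h,t)\df\|\rho(g_th\tilde x)\p\|$ cannot be uniformly small on $B_k\times I_k$. Third, after rescaling $B_k$ to a fixed domain $D_0$ via $\Phi_{t_k}$ (permitted by Proposition \ref{prop: initial domain}), a polynomial-type lower bound on how rapidly $\varphi_\p$ leaves the $\vre$-neighborhood of $0$ shows that the bad set $\{h\in B_k:\min_{t\in I_k}\varphi_\p(h,t)<\vre\}$ lies in a sub-region Alice can avoid, provided the game constant $a$ exceeds a threshold $a'$ determined by $N$, the polynomial bound, and the $\Ad(g_1)$-spectrum.

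Combining the per-stage moves and invoking Theorem \ref{thm: countable general}(a) over a sequence $\vre_n\to 0$ produces $h^\ast$ for which $h^\ast x\in E(F^+,\infty)$, and since this $h^\ast$ is the outcome of a strategy, the set \equ{efh} is $a$-winning. The main obstacle is the polynomial estimate in the third ingredient: one must establish, with constants independent of the stage $k$ and of the individual $\p$, that $h\mapsto\varphi_\p(\Phi_{t_k}(h),t)$ is sufficiently non-degenerate, e.g.\ ``$(C,\alpha)$-good'' in the sense of Kleinbock--Margulis. The $(F^+,\Gamma)$-expanding hypothesis is precisely what guarantees that the underlying polynomial map on $H$ is nonzero, but extracting scale-invariant constants requires a careful interaction between the contraction of $\Phi_t$ on $H$, the weight decomposition of $V$, and the $\Ad(g_t)$-expansion rates on $V^>$; this is where the technical heart of the argument lies.
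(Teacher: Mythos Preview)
Your outline correctly identifies the three key ingredients (the compactness criteria, the bound on the number of simultaneously dangerous vectors, and the $(F^+,\Gamma)$-expanding hypothesis), and your intuition that Alice must handle finitely many polynomial constraints per stage is right. But the mechanism you describe leaves a real gap.

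The paper does \emph{not} try to keep $\|\rho(g_th\tilde x)\p\|\ge\vre$ uniformly over a time window $I_k$ at each stage. Instead it maintains, inductively in $i$, the dichotomy
\[
\text{(a)}\ \|\rho(g_ih\tilde x)\p_U\|\ge\vre_0\qquad\text{or}\qquad
\text{(b)}\ \|\Pi\circ\rho(g_ih\tilde x)\p_U\|\ge\vre_3,
\]
where $\Pi:V\to V^>$ is the projection. The crucial observation is that once (b) holds for some $i$, it holds for all later $i$ automatically, since $\rho(g_t)$ expands $V^>$ (this is the content of \equ{norm requirements}). Thus Alice never has to revisit a vector for which (b) was once achieved; she only intervenes at the moment a vector first fails (a), and at that moment uses the expanding hypothesis and a polynomial lemma to force (b). Your window-by-window description does not contain this persistence argument, and without it one cannot see why handling $N$ vectors at stage $k$ suffices to control them at all later stages.

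Two smaller points. First, the polynomial input is lighter than $(C,\alpha)$-goodness: since the relevant maps $h\mapsto\Pi\circ\rho(h)\vv$ are polynomials of uniformly bounded degree in a fixed chart, a compactness argument on the finite-dimensional space of such polynomials (the paper's Lemma~\ref{lem: polynomial maps}) gives a ball of fixed diameter $c$ on which each of at most $r$ such polynomials is bounded below by $\eta\|\varphi\|$; the expanding hypothesis (Lemma~\ref{lem: polys nonzero}) then supplies the lower bound on $\|\varphi\|$. Second, your final step invoking Theorem~\ref{thm: countable general}(a) over $\vre_n\to 0$ is unnecessary: the induction above produces a \emph{single} compact $K$ (defined via $\vre_3$) containing $g_ihx$ for all $i$, so $h x\in E(F^+,\infty)$ directly.
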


Note that \commargin{Barak:
added some notation to stress we are restricting to $H$. Dima: replaced it by an explanation
after the theorem, is it OK?}
 since   $H$ is normalized by $F$, any $\Phi_t  \in \FF$ can be viewed as a  (contracting)
automorphism of $H$, that is, the MSG considered in the above theorem is in fact induced by the restriction
of $\FF$ to $H$. 

Theorem \ref{thm: more general} will be proved in \S\ref{winning}. Meanwhile,
our goal in this section is to give some examples of $(F^+,
\Gamma)$-expanding subgroups.  We begin with the following sufficient condition:

\begin{prop}
\name{prop: november condition}
Let $A 
$ be the semisimple part of $F$ as in Proposition \ref{prop: Jordan}.
\commargin{Barak: Note that A replaces F in several
places. Made lots of small changes 
from here down to the end of the section. Dima: made lots of other small changes, please check.}
 Suppose $H$ is a unipotent group normalized by $A$, and suppose there is
a semisimple 
subgroup $L \subset G$ containing $A$ and $H$ such that:
\begin{itemize}
\item[(a)]
for any representation 
$\tau: L \to \GL(W)$ 
 defined over $\R$, if $\vw\in W$ is 
invariant under $\tau(H)$, then either $\vw$ 
is fixed by $\tau(L)$ or $\vw \notin W^\leq$;
\item[(b)]
$A$ projects nontrivially onto any simple factor of $L$;
\item[(c)]
for any $\mathbf{p} \in \mathcal{W}, \, \rho(L)$ does not fix $\mathbf{p}$. 
\end{itemize}
Then $H$ is $(F^+, \Gamma)$-expanding.
\end{prop}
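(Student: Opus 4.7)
The plan is to argue by contradiction: suppose some $\mathbf{p}\in\mathcal{W}$ satisfies $\rho(H)\mathbf{p}\subset V^{\leq}$. Taking $h=e$ gives $\mathbf{p}\in V^{\leq}$. My first move is to restrict to the $L$-subrepresentation $W\df\spa(\rho(L)\mathbf{p})\subset V$; since $A\subset L$, the $T$-weight decomposition of $V$ restricts to $W$, so that $W^{\leq}=W\cap V^{\leq}$ in the notation of the paper.

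Next I would exploit semisimplicity of $L$ to split $W=W^{L}\oplus W'$ as an $L$-representation, where $W^{L}$ is the $L$-fixed subspace and $W'$ is an $L$-invariant complement containing no nonzero $L$-fixed vector. Writing $\mathbf{p}=\mathbf{p}^{L}+\mathbf{p}'$ accordingly, I note that $\mathbf{p}^{L}$ is $L$-fixed, hence $A$-fixed and in $V^{0}\subset V^{\leq}$; subtracting therefore gives $\rho(H)\mathbf{p}'\subset V^{\leq}\cap W'=(W')^{\leq}$.

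The final step is to produce an $H$-fixed vector in $(W')^{\leq}$ and invoke hypothesis (a). If $\mathbf{p}'\neq 0$, the subspace $\spa(\rho(H)\mathbf{p}')$ is a nonzero $H$-invariant subspace of $W$ contained in $(W')^{\leq}$; since $H$ is unipotent, Kolchin's theorem yields a nonzero $\rho(H)$-fixed vector $\vw$ in it. Applying (a) to the representation $\rho|_{L}:L\to\GL(W)$, the $H$-invariant vector $\vw\in W^{\leq}$ must be $L$-fixed, contradicting the fact that $\vw\in W'$ and $W'$ has no nonzero $L$-fixed vector. Hence $\mathbf{p}'=0$, so $\mathbf{p}=\mathbf{p}^{L}$ is $L$-fixed, contradicting (c). The main subtlety is the elementary compatibility $W^{\leq}=W\cap V^{\leq}$; hypothesis (b) does not seem to enter this abstract deduction, though it is presumably needed to verify (a) in the concrete examples to follow.
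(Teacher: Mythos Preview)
Your argument is correct, and it takes a genuinely different route from the paper's.

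The paper proceeds by passing to an exterior power: with $\hat V = \spa\,\rho(H)\mathbf{p}\subset V^{\leq}$ and $d=\dim\hat V$, it looks at the one-dimensional $\bigwedge^d\hat V\subset \bigwedge^d V$; unipotence of $H$ forces the spanning vector $\vw$ to be $\tau(H)$-fixed, and (a) then shows $\hat V$ is $\rho(L)$-invariant. Since $\hat V\subset V^{\leq}$ and the weight of $\vw$ under $A$ is the sum of the $a_1$-weights on $\hat V$, one gets $\hat V\subset V^0$, so $A$ lies in the kernel $N$ of the $L$-action on $\hat V$; hypothesis (b) is then invoked to force $N=L$, whence $\mathbf p$ is $L$-fixed, contradicting (c).

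Your approach instead splits off the $L$-fixed part of $\mathbf p$ at the outset and then finds, via Kolchin, an $H$-fixed vector in the complement $W'$ lying in $V^{\leq}$; (a) forces it to be $L$-fixed, contradicting $(W')^L=0$. This is cleaner, and your observation is right: hypothesis (b) is not needed for the proposition itself once (a) and (c) are assumed. In the paper (b) is genuinely used only because of the detour through $\hat V\subset V^0$; and, as you guessed, (b) reappears when verifying (a) for the concrete choices of $L$ later on (e.g.\ in the proof for $H^+$).

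One small wording fix: the sentence ``since $A\subset L$, the $T$-weight decomposition of $V$ restricts to $W$'' is not quite accurate, since the ambient maximal split torus $T$ need not lie in $L$. What you actually need (and use) is that $W$ and $W'$ are $A$-invariant, so the $a_1$-eigenspace decomposition of $V$ restricts to them; this is what gives $W'\cap V^{\leq}=(W')^{\leq}$ in the sense relevant to condition (a).
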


\begin{proof}
Take $\mathbf{p} \in \mathcal{W}$, let 
$$\hat V  \df \spa \, \rho(H)\mathbf{p}, $$
and suppose, in contradiction to \equ{eq: defn expanding}, that $\hat V
\subset V^\leq$. Put $d = \dim( \hat V)$, consider the
representation $\tau \df \bigwedge^d \rho$ on the space $W \df
\bigwedge^d 
V$, and take a nonzero vector $ \vw \in \bigwedge^d \hat V$. 

Since $\hat V$ is
$\rho(H)$-invariant, the line spanned by $\vw$ is fixed by
$\tau(H)$. Since $H$ is unipotent, it has no multiplicative algebraic
characters, and hence $\vw$ is fixed by $\tau(H)$.  The fact that $\hat V
\subset V^\leq$ implies that $\vw \in W^{\leq}$. By property (a),
$\vw$ is $\tau(L)$-invariant, i.e.\ the subspace
$\hat V$ is $\rho(L)$-invariant. Also, since $A\subset L$, 
 $\vw$ is fixed by $\tau(A)$,
therefore $\hat V \subset V^0$. Now let 
$N$ denote the kernel of the action of
$L$ on $\hat V$. Then $N$ is a normal subgroup of $L$, and it contains $A$,
since $\hat V \subset V^0$. By (b) $N=L$, a contradiction to (c). 
\end{proof}

We remark that condition (a) in the above proposition is
equivalent to the statement that the subgroup $AH$ generated by $A$
and $H$ is {\sl epimorphic\/} in $L$, see \cite{epi}. 

\smallskip

We are now ready to exhibit subgroups which are $(F^+,\Gamma)$-expanding.

\begin{prop}\name{prop: horospherical F expanding}
Let $G$ and $\Gamma$ be as in Theorem \ref{thm: C}, and let $F$ be non-quasiunipotent.
Then the expanding horospherical subgroup $H^+$ defined in \equ{defn of
U+} is $(F^+, \Gamma)$-expanding. 
\end{prop}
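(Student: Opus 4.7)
My plan is to apply Proposition \ref{prop: november condition} with a judicious choice of $L$. Let $A$ denote the semisimple part of $F$ from Proposition \ref{prop: Jordan}; this is nontrivial because $F$ is non-quasiunipotent. I would take $L$ to be the product of those simple factors of $G$ on which the projection of $A$ is nontrivial. Since $\h^+$ is the direct sum of positive $\Ad(A)$-weight subspaces of $\g$ and these all lie in $\mathfrak{l}$, we have $H^+ \subset L$, meeting the initial hypothesis of Proposition \ref{prop: november condition}; condition (b) of that proposition holds by construction.

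To verify condition (a), I would first reduce to the case of an irreducible $L$-representation $\tau:L\to\GL(W)$, since the $\tau(H^+)$-fixed subspace respects irreducible decompositions. The key observation is that if $\vw \in W$ is $\tau(H^+)$-fixed and $\tau$ is irreducible, then $\vw$ lies in the highest $A$-weight space $W^\Lambda$: indeed, by the PBW theorem,
\[
U(\mathfrak{l})\vw = U(\mathfrak{l}^{(0)})\, U(\mathfrak{h}^-)\vw,
\]
where $\mathfrak{l}^{(0)}$ denotes the centralizer of $\mathfrak{a}$ in $\mathfrak{l}$; the right-hand side consists of vectors of $A$-weight at most the $A$-weight of $\vw$, so by irreducibility the weight of $\vw$ must be the highest. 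Next, because $L$ is semisimple and $A$ projects nontrivially to every simple factor of $L$, the trace-zero identity $\operatorname{tr}(d\tau(X_A)) = 0$ for a generator $X_A$ of $\mathfrak{a}$, combined with simplicity of each factor, forces the highest $A$-weight of every nontrivial irreducible $\tau$ to be strictly positive. Hence for nontrivial irreducible $\tau$ we have $W^{\tau(H^+)} \cap W^{\leq} = \{0\}$, and for reducible $\tau$ the $H^+$-fixed vector $\vw \in W^\leq$ is supported only in the trivial $L$-summands, so $\vw$ is $\tau(L)$-fixed.

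For condition (c), I would argue by contradiction. Suppose $\rho(L)$ fixes some $\mathbf{p} \in \mathcal{W}$; writing $\mathbf{p} = \rho(g)\mathbf{p}_U$, the conjugate $g^{-1} L g$ is contained in the stabilizer of $\mathbf{p}_U$, which lies in the proper parabolic $N_G(U)$. In the rank-one case, $G$ is simple, so $L = G$, immediately contradicting the properness of $N_G(U)$. In the arithmetic case $L$ is a product of $\R$-simple factors of $G$ but in general not a $\Q$-subgroup; containment of $L$ in a conjugate of a $\Q$-parabolic $P$ would force $P$ to split along the direct-product decomposition $G = L \times L'$, with $L'$ the complementary product of simple factors, and would thereby exhibit $G$ as a $\Q$-rational direct product along $L$ and $L'$ — contradicting the irreducibility of $\Gamma$, which, by Margulis arithmeticity, means $G$ is $\Q$-almost-simple.

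The main technical obstacle I anticipate is step (c) in the arithmetic case: cleanly ensuring that containment of the $\R$-direct factor $L$ inside a conjugate of a $\Q$-parabolic really forces a $\Q$-rational direct-product decomposition of $G$, so that irreducibility of $\Gamma$ produces the desired contradiction. The verification of (a) is largely formal once one invokes PBW and the trace-zero identity, and (b) is automatic from the choice of $L$.
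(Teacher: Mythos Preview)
Your overall strategy matches the paper's: apply Proposition~\ref{prop: november condition} with $L$ the smallest normal subgroup of $G$ containing $A$ (equivalently, the product of $\R$-simple factors on which $A$ projects nontrivially). Your verification of (a) is correct and essentially the same as the paper's; where you invoke PBW and the trace-zero identity, the paper phrases matters via the parabolic $Q$ with $\Lie(Q)=\goth{l}^{\leq}$, Zariski density of $QH^+$ in $L$, and the symmetry $\Phi=-\Phi$. Both arguments land on the same key step: an $L$-invariant subspace of $W^{\leq}$ must lie in $W^0$, so $A$ (and hence, by (b), all of $L$) acts trivially on it.

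The real difference is in (c). The paper does \emph{not} argue algebraically via $\Q$-simplicity. Instead it uses the dynamical Proposition~\ref{prop: since Gamma irreducible}: since $L$ acts minimally on $G/\Gamma$, one has $\rho(G)\mathbf{p}_U\subset\overline{\rho(\Gamma L)\mathbf{p}_U}=\overline{\rho(\Gamma)\mathbf{p}_U}$; discreteness of $\rho(\Gamma)\mathbf{p}_U$ (Proposition~\ref{prop: cc, rank1}(1) in rank one, rationality in the arithmetic case) then forces $\rho(G)$ to fix $\mathbf{p}_U$, so $U$ is normal in $G$ --- impossible. This handles both cases uniformly and avoids any appeal to the $\Q$-structure of $G$ beyond what is already built into~$\rho$.

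Your algebraic route for (c) is workable, but the step ``$P$ splits along $G=L\times L'$, hence $G$ has a $\Q$-rational direct product decomposition'' is not justified as written: $P=L\times(P\cap L')$ as abstract groups does not by itself make $L$ a $\Q$-subgroup. A clean completion is: since $L\trianglelefteq G$ and $L\subset P$, one has $[L,\Radu(P)]\subset L\cap\Radu(P)=\{e\}$ (the intersection is a unipotent normal subgroup of the semisimple group $L$), so $\Radu(P)\subset Z_G(L)=L'$; the normal $\Q$-subgroup generated by the $\Q$-group $\Radu(P)$ is then contained in $L'$ but, by $\Q$-simplicity of $G$ (irreducibility plus arithmeticity), must equal $G$, contradicting $L\neq\{e\}$.
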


Note that this proposition, together with Theorem \ref{thm: more general}, 
immediately implies Theorem \ref{thm: C}. 
For the proof of Proposition  \ref{prop: horospherical F expanding} we will need the following fact, which will be used later on as well:
\begin{prop}\name{prop: since Gamma irreducible}
Suppose $G$ is a semisimple centerfree Lie group with no compact factors,
$\Gamma$ is an irreducible lattice, and $L$ is a 
nontrivial normal subgroup of $G$. Then the $L$-action on $\ggm$ is
minimal and uniquely ergodic.
\end{prop}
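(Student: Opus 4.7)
The plan is to exploit the product structure forced by normality together with the irreducibility of $\Gamma$. First I would replace $L$ by its identity component $L^0$, which is legitimate because any discrete normal subgroup of the connected centerfree semisimple group $G$ lies in the center and is hence trivial, so $L^0$ is nontrivial; moreover both minimality and unique ergodicity descend from $L^0$ to $L$, since $L^0$-orbits are contained in $L$-orbits and every $L$-invariant probability is a fortiori $L^0$-invariant. Once $L$ is connected, the classification of connected normal subgroups of a centerfree semisimple Lie group shows that $L$ is the product of some of the simple factors of $G$, so that $G$ splits as an internal direct product $G = L \times L'$, where $L'$ is the product of the remaining simple factors. Writing $p : G \to L'$ for the projection onto the second factor, irreducibility of $\Gamma$ amounts to the density of $p(\Gamma)$ in $L'$.

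For minimality, I would first show that $L\Gamma$ is dense in $G$. In the direct-product decomposition, for $\gamma \in \Gamma$ one has $L\gamma = L \times \{p(\gamma)\}$; hence $L\Gamma = L \times p(\Gamma)$, which is dense in $L \times L' = G$ by irreducibility. Consequently $\overline{L \cdot e\Gamma} = \overline{L\Gamma}/\Gamma = \ggm$. For an arbitrary point $g\Gamma \in \ggm$, normality of $L$ gives $L \cdot (g\Gamma) = g \cdot (L\Gamma/\Gamma)$, whose closure equals $\ggm$ because left translation by $g$ is a homeomorphism of $\ggm$. Thus every $L$-orbit is dense.

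For unique ergodicity, I would invoke Ratner's measure classification theorem. Since $L$ is a connected semisimple Lie group, it is generated by its one-parameter unipotent subgroups, so Ratner's theorem applies: any $L$-ergodic $L$-invariant Borel probability measure $\mu$ on $\ggm$ is the natural algebraic measure supported on a closed orbit $Hx_0$ for some closed connected subgroup $H$ with $L \subseteq H \subseteq G$. The support of $\mu$ is a closed $L$-invariant set, and minimality forces $\mathrm{supp}(\mu) = \ggm$; hence $H x_0 = \ggm$, i.e., $H$ acts transitively on $\ggm$. The Haar probability on $\ggm$ is also an $H$-invariant probability, and on any transitive $H$-space the invariant probability measure is unique up to normalization. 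Hence $\mu$ coincides with Haar, and unique ergodicity follows by ergodic decomposition.

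The main obstacle is the invocation of Ratner's theorem for unique ergodicity; this is a deep black box, but it is precisely the tool suited to this semisimple setting. A minor technical point is the uniqueness of an invariant probability on the transitive $H$-space $\ggm \cong H/(H \cap g_0 \Gamma g_0^{-1})$, but this is a standard consequence of orbit-stabilizer considerations once one notes that the stabilizer gives a finite-volume quotient, which is automatic since $\ggm$ itself carries a finite $H$-invariant measure.
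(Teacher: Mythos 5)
Your proof is correct, but for unique ergodicity it takes a genuinely different and much heavier route than the paper. For minimality, your argument is essentially the paper's: normality of $L$ gives $L(g\Gamma) = g(L\Gamma)\Gamma$, and irreducibility gives density of $L\Gamma$; your explicit product decomposition $G = L \times L'$ is just a concrete way of unwinding what irreducibility means, and it also lets you sidestep connectedness by passing to $L^0$ (a harmless preliminary, though the paper's phrasing makes it unnecessary).

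For unique ergodicity, however, you invoke Ratner's measure classification theorem, which is a very deep black box. The paper instead gives a short elementary argument: lift an $L$-invariant probability $\mu$ on $\ggm$ to a Radon measure $\hat\mu$ on $G$ that is left-$L$-invariant and right-$\Gamma$-invariant; since $L$ is normal, this forces $\hat\mu$ to be right-invariant under $L\Gamma$, and since the right-stabilizer of a Radon measure is a closed subgroup containing the dense subgroup $L\Gamma$, $\hat\mu$ must be $G$-invariant, i.e.\ Haar. Both approaches are valid, but the paper's is self-contained and uses nothing beyond basic topology of Lie groups and Radon measures, whereas yours imports the full strength of Ratner's theorem to settle a problem that is really about normality and density rather than unipotent dynamics. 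If you want to keep the Ratner route, it works, but note you implicitly need $L$ to have no compact factors for $L$ to be generated by unipotents — this holds here since $G$ has no compact factors and $L^0$ is a product of simple factors of $G$, but it's worth flagging the hypothesis you are using.
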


\begin{proof}
The fact that $L\Gamma$ is dense in $G$ follows immediately from the
irreducibility of $\Gamma$. Since $L$ is normal, so
is $gL\Gamma = Lg\Gamma$ for any $g \in G$, so the action is
minimal. If $\mu$ is any $L$-invariant probability measure on
$\ggm$, then it lifts to a Radon measure $\hat{\mu}$ on $G$ which is
left-invariant by $L$ and right-invariant by $\Gamma$. Since $L$
is normal in $G$, this means that $\hat{\mu}$ is right-invariant by
$L\Gamma$. Since the stabilizer of a Radon measure is closed and
$L\Gamma$ is dense, this means that $\hat{\mu}$ is $G$-invariant, i.e.\ it
is a Haar measure on $G$. In particular $\mu$ is the unique
$G$-invariant probability measure.
\end{proof}

\ignore{
We also need to discuss the Lie-algebraic description of $H^+$. \commargin{Moved $\Phi$ here}
Namely,  let $\Phi$ be the set of roots, i.e. weights
of $\Ad: G \to \GL(\goth{g})$, and for $\alpha\in\Phi$ let $\g_\alpha$ be the
root subspace of $\g$. \commargin{Again, how is $\g_\alpha$ exactly defined? don't we
need to complexify?}
Now define 
$$\goth{h}^+ \df \bigoplus_{\mathrm{Re} \, \alpha(a_1) >
0}\goth{g}_\alpha, \ \ \goth{h}^0 \df \bigoplus_{\mathrm{Re} \, \alpha(a_1) =
0}\goth{g}_\alpha,\ \ \goth{h}^- \df \bigoplus_{\mathrm{Re} \, \alpha(a_1) <
0}\goth{g}_\alpha\,.
$$
Then $\h^+$ is exactly the Lie algebra of $H^+$ as in \equ{defn of U+}.
The fact that $F$ is not \qu\ 
implies that $\h^+ \ne \{0\}$ and $\h^- \ne \{0\}$.
Moreover one can consider   subgroups  $H^0$, $H^-$ of $G$ corresponding to $\h^0$
and $\h^-$; note that $H^-$ is the \ehs\ corresponding to $g_{-1}$. 
Since \eq{decomp}{\g = \h^+ \oplus\h^0\oplus\h^-\,,} it follows that  the multiplication map 
\eq{mult}{(h^-,h^+,h^0)\mapsto
h^-h^+h^0,
\ \ H^-\times H^+ \times H^0\to G \,,}
 is a homeomorphism in the neighborgood of identity
(regardless of the order of the factors).}

\begin{proof}[Proof of Proposition \ref{prop: horospherical F expanding}]
We apply Proposition \ref{prop: november condition} with
$L$ being the smallest normal subgroup of $G$ which contains $A$. We need
to show that $H^+$ is contained in $L$ and normalized by $A$, and verify
conditions (a), (b) 
and (c). 

Since the projection of $H^+$ onto $G/L$ is expanded by conjugation under the
projection of $A$, which is trivial, $H^+$ is contained in $L$. 
Note that if we consider the representation $\Ad: G \to
\GL(\goth{g})$, where $\goth{g}$ is the Lie algebra of $G$, 
then, using the notation above, the Lie algebra $\goth{h}$ of
$H^+$ is precisely $\goth{g}^>$. In particular, it is
$\Ad(A)$-invariant, so $H$ is normalized by $A$.

Condition (b) is
immediate from the definition of $L$. To uphold (c), suppose that
$\rho(L)$ fixes $\mathbf{p}\in\mathcal{W}$. There is some $g \in G$ so that
$\mathbf{p} = \rho(g)\pu$, where $\pu$ is a vector representing a
unipotent radical of a parabolic subgroup. 
So $\rho(g^{-1}Lg)=\rho(L)$ fixes $\pu$. On the other
hand, by Proposition \ref{prop: since Gamma irreducible} we have 
$$\rho(G)\pu \subset \cl{\rho(\Gamma L)\pu}
= \cl{\rho(\Gamma)\pu}\,.$$
Using either Proposition
\ref{prop: cc, rank1}(1) in the rank-one case, or rationality of $\rho$ and
$\mathbf{p}_U$ in the arithmetic case, we see that after replacing
$U$ with a conjugate we may assume that $\rho(\Gamma)\pu$ is
discrete. Therefore  $\rho(G)$ fixes $\pu$, thus $\Lie(U)$
is $\Ad(G)$-invariant. Hence $U$ is a normal unipotent subgroup of $G$,
which is impossible.  

It remains to verify (a). Let $\tau: L \to \GL(W)$ be a representation,
let $\goth{l} = \Lie(L)$, and let $\Phi$ be the set 
of roots, i.e. weights 
of $\Ad: L \to \GL(\goth{l})$. 
Recall that 
\eq{eq: derivative}{d\tau(\goth{l}_\alpha)W^{\lambda} \subset
W^{\lambda+\alpha,}} 
where $\lambda \in \Psi_\tau, \, \alpha \in \Phi,\,
\goth{l}_\alpha$ is the root subspace corresponding to the root $\alpha$ of
$L$, and $d\tau$ is the derivative map $\goth{l} \to 
\End(W)$. Let $Q$ be the parabolic subgroup of $L$ with Lie algebra
$\goth{q} = \goth{l}^{\leq}$, 
then $\goth{q} \oplus \goth{h} = \goth{l}$ (as vector spaces), hence
$QH^+$ is Zariski dense in $L$. Also by \equ{eq: derivative},
$W^\leq$ is $\tau(Q)$-invariant. 
Now suppose $\vw \in W$ is a vector
 fixed by $\tau(H^+)$ and contained in $W^\leq$. Then $\tau(QH^+)\vw \subset
W^\leq$. Therefore $\tau(L)\vw $ is a subset of $ W^\leq$, hence so is $\hat V
\df \spa \, \tau(L)\vw$, a $\tau(L)$-invariant subspace. Since $L$ is
semisimple, $\Phi 
= -\Phi$, which implies that $\hat V \subset W^0$. In particular $L \mapsto
\End(\hat V)$ has a kernel $N$ which contains $A$. Since $A$ projects
nontrivially onto every factor of $L$ we must have $L \subset N$, i.e. $L$
acts trivially on $\hat V$, in particular $\vw$ is $\tau(L)$-invariant. 
\end{proof}




As described in the introduction, Dani \cite{Dani-rk1} proved that
$E(F, \infty)$ is thick whenever $G$ is a rank one semisimple Lie
group and $F$ 
is non-\qu. He did this by
considering the Schmidt game 
played on the group $H$ corresponding to a certain subgroup of $H^+$.
It turns out that Dani's result 
is a special case of
Theorem \ref{thm: more general}:

\begin{prop}
Suppose $\rank_{\R}(G)=1.$ Then any 
nontrivial connected subgroup $H$ of
$H^+$ which is normalized by $F$ is $(F^+,\Gamma)$-expanding. 
\end{prop}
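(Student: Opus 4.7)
The plan is to reduce the question to a concrete $\mathfrak{sl}_2$-representation-theoretic calculation, using the rank-one structure to pin down $\mathcal{W}\cap V^\leq$ explicitly. First, if $\mathbf{p} \in \mathcal{W}$ has nonzero projection onto $V^>$, then $\mathbf{p} = \rho(e)\mathbf{p}$ already shows $\rho(H)\mathbf{p} \not\subset V^\leq$, so I may assume $\mathbf{p} \in V^\leq$. The $G$-equivariant Plücker embedding $G/P \hookrightarrow \mathbb{P}(V)$, $gP \mapsto [\rho(g)\pu]$, meets $\mathbb{P}(V^\leq)$ in a closed $P^-$-invariant subset of $G/P$. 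In rank one, $G/P$ has only two $P^-$-orbits: the open Bruhat cell $U^-P/P$, on which $\pi_\Lambda(\rho(\bar u)\pu) = \pu$ for every $\bar u \in U^-$ (since $\rho(\bar u) - \operatorname{Id}$ strictly lowers $A$-weights), and the closed orbit $\{wP\}$, which maps to the line $\R\pu^-$ where $\pu^- \df \rho(w)\pu$ represents $\Lie(U^-)$. Hence $\mathcal{W}\cap V^\leq = \R^*\pu^-$, and the problem reduces to showing $\rho(H)\pu^- \not\subset V^\leq$.

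For this, I would pick a nonzero $\xi \in \Lie(H)$ of pure $A$-weight $\mu \in \{\alpha, 2\alpha\}$, which exists because $\Lie(H)$ is $A$-invariant and contained in $\Lie(H^+) = \goth{g}_\alpha \oplus \goth{g}_{2\alpha}$. The crucial observation is that $[\xi, \theta\xi] \in \goth{a}$: it lies in $\goth{g}_0 = \goth{a} \oplus \goth{m}$, and $\theta[\xi,\theta\xi] = [\theta\xi,\xi] = -[\xi,\theta\xi]$, so it lies in the $(-1)$-eigenspace $\goth{p}$ of $\theta$, whose intersection with $\goth{g}_0$ is precisely $\goth{a}$. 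Scaling $f_\xi$ as a positive multiple of $\theta\xi$ so that $[\xi, f_\xi] = h_\mu$, where $h_\mu \in \goth{a}$ is normalized by $\mu(h_\mu) = 2$, yields an $\mathfrak{sl}_2$-triple $\{\xi, h_\mu, f_\xi\}$ with $h_\mu \in \goth{a}$. Let $V_0$ be the irreducible $G$-subrepresentation of $V$ generated by $\pu$; its highest $A$-weight space $V_0^\Lambda = \R\pu$ is one-dimensional, and $\pu^- \in V_0$ has $A$-weight $-\Lambda$. Because $h_\mu \in \goth{a}$, every $h_\mu$-weight on $V_0$ is bounded above by $N \df \Lambda(h_\mu)$.

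The conclusion then follows from a direct $\mathfrak{sl}_2$-computation. I would expand $\ad(f_\xi)\pu^-$ on $\pu^- = \bar u_1 \wedge \cdots \wedge \bar u_d$ termwise; each bracket $[f_\xi, \bar u_i]$ either lands in $\goth{g}_{-3\alpha} = 0$ or in $\goth{g}_{-2\alpha}$, and in the latter case the resulting wedge contains more than $m_{2\alpha}$ factors in the $m_{2\alpha}$-dimensional space $\goth{g}_{-2\alpha}$, hence vanishes. Thus $\pu^-$ is the lowest-weight vector of its $\mathfrak{sl}_2$-irreducible component, which then has dimension $N+1$, and $\ad(\xi)^N \pu^-$ is a nonzero highest-weight vector, which by one-dimensionality of $V_0^\Lambda$ must be a nonzero multiple of $\pu$. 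For $h = \exp(t\xi) \in H$ one then gets
\[
\pi_\Lambda\bigl(\rho(h)\pu^-\bigr) = \tfrac{t^N}{N!}\,\ad(\xi)^N \pu^- \neq 0 \quad \text{for } t \neq 0,
\]
so $\rho(h)\pu^- \notin V^\leq$, completing the proof. The main technical point is the construction of the $\mathfrak{sl}_2$-triple with $h$-element inside $\goth{a}$, which is where the rank-one hypothesis is essential: it allows $h_\mu$-weights to be identified with $A$-weights and thus ensures $N$ is the top $h_\mu$-weight on $V_0$.
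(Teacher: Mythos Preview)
Your proof is correct and takes a genuinely different route from the paper's.  Both arguments hinge on an $\mathfrak{sl}_2$-triple attached to a pure-weight element $\xi\in\Lie(H)$, but they deploy it very differently.  The paper reduces to $\dim H = 1$, constructs via Jacobson--Morozov a subgroup $L$ locally isomorphic to $\SL_2(\R)$ containing both $H$ and $F$ (rank one forces the diagonal of $L$ to coincide with $A$), and then invokes the abstract criterion of Proposition~\ref{prop: november condition}; the main work is verifying condition~(c), done by a torus-rank argument showing $L$ cannot fix any $\mathbf p\in\mathcal W$.  You instead first pin down $\mathcal W\cap V^\leq$ explicitly: the Bruhat structure of $G/P$ in rank one (one open $P^-$-cell on which the highest-weight component survives, and one $P^-$-fixed point $wP$) forces $\mathcal W\cap V^\leq\subset\R^*\pu^-$.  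After that you carry out a concrete $\mathfrak{sl}_2$-computation on the single vector $\pu^-$, showing it is a lowest-weight vector and that $d\rho(\xi)^N\pu^-$ lands in the one-dimensional top weight space $\R\pu\subset V^>$.  Your approach is more self-contained (it bypasses Proposition~\ref{prop: november condition} entirely) and makes the mechanism visible; the paper's is more structural and reusable.

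Two small remarks.  First, you assert that $\Lie(H)$ contains a pure $A$-weight vector ``because $\Lie(H)$ is $A$-invariant'', but the hypothesis only gives $F$-invariance.  The conclusion still holds: in rank one the unipotent part of $g_t$ is trivial (it would have to lie in the reductive centralizer $MA$ of $A$), so $\Ad(g_1)=\Ad(k_1)\Ad(a_1)$ is semisimple with eigenvalue moduli $e^{\alpha(a_1)}$ on $\goth g_\alpha$ and $e^{2\alpha(a_1)}$ on $\goth g_{2\alpha}$; any $\Ad(g_1)$-invariant subspace of $\goth g^>$ therefore splits along $\goth g_\alpha\oplus\goth g_{2\alpha}$.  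Second, the scalar relating $f_\xi$ to $\theta\xi$ is in fact negative (a short Killing-form computation), not positive; this is harmless for your argument.
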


\begin{proof}
Clearly any group containing an $(F^+, \Gamma)$-expanding subgroup
is also $(F^+, \Gamma)$-expanding; therefore it suffices to 
prove the proposition when $\dim(H) = 1$. 
By the Jacobson-Morozov lemma one can find a one-dimensional subgroup
$H^-$ which is opposite to $H$ in the following sense: the group $L$
generated by $H$ and $H^-$ is locally isomorphic \commargin{Barak:
It is hard to find a good
reference to the Jacobson Morozov lemma but I think it is on the level
of algebras so need to discuss local isomorphism, which is sometimes
also called isogeny.} to $\SL_2(\R)$, and its diagonal
subgroup $F'$, which normalizes both $H$ and $H^-$, centralizes $F$. Since
$\rank_{\R}(G)=1$, we must have $F=F'$.  
%
%
In particular $L$ is generated by unipotent elements, and contains $FH$ as a
Borel subgroup.
The
representation theory of $\goth{sl}_2(\R)$ now shows \commargin{to be on the
safe side, even if this is not $\SL(2,\R)$ it has the same properties
by local isomorphism.} that condition 
(a) of  Proposition \ref{prop: november condition} holds, and (b) is
immediate. 
%
It remains to check (c). Suppose that $\rho(L)$ fixes $\mathbf{p}$,
that is $L$ normalizes a 
parabolic $P$. Since it is simple, it fixes its volume element, hence
so does $F$. Since the normalizer of a parabolic has elements of $G$
which do not preserve its volume 
element, we find that the normalizer of $P$ contains a torus of rank
$2$, contradicting the fact that $\rank_{\R} (G) =1$.
\end{proof}

We conclude the section with a higher rank example
related to \da.
 Let $m,n\in \N$ be positive integers,  put $k = m+n$,  and consider 
 $G = \SL_k({\R})$, $\Gamma = \SL_k({\Z})$, and 
$F = \{g_t\}$, where
$$
g_t =
\text{diag}(e^{t/m},\dots , e^{t/m}, e^{- t/n},\dots , e^{-
t/n})
$$
(see case  \equ{case1} discussed in the introduction). Clearly the group 
\eq{group di}{H = \{u_Y : Y \in M_{m,n}\},} 
  where
 \eq{def uy}{
u_Y \stackrel{\mathrm{def}}{=} \left(
\begin{array}{ccccc}
I_{m} & Y \\ 0 & I_{n} 
\end{array}
\right) 
}
 (here and hereafter $\mr$ is the space of $m\times n$ matrices with real entries
 and $I_{\ell}$ stands for the $\ell\times \ell$ identity matrix), is expanding horospherical with respect to $g_1$, hence $(F^+, \Gamma)$-expanding
by Proposition  \ref{prop: horospherical F expanding}. Now, 
more generally, consider
 $F_{\vr,\vs} \df
 \{g^{(\vr,\vs)}_t\}$ where
\eq{new defn gt}{
g^{(\vr,\vs)}_t =  \diag(e^{r_1t}, \ldots,
e^{r_{m} t}, e^{-s_1t}, \ldots,
e^{-s_{n}t} )\,,
}
with 
\eq{def rs}{r_i,s_j > 0\quad\text{and}\quad\sum_{i=1}^m r_i = 1 =
 \sum_{j=1}^n s_j\,.}
Then $H$ is contained in the expanding horospherical  subgroup with respect to $g^{(\vr,\vs)}_1$,
and the containment is proper when some components of either $\vr$ or $\vs$ are different.
Nevertheless, the following holds:

\begin{prop}\name{prop: epimorphic}
The group \equ{group di} is $(F_{\vr,\vs}^+, \Gamma)$-expanding for any $\vr,\vs$ as in \equ{def rs}.  
\end{prop}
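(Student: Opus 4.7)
The plan is to apply Proposition \ref{prop: november condition} with $L = G = \SL_k(\R)$. Since $L$ is simple and the semisimple part $A = F_{\vr,\vs}$ is nontrivial, condition (b) is immediate; condition (c) is verified exactly as in the proof of Proposition \ref{prop: horospherical F expanding}, for if $\rho(L)$ fixed some $\mathbf{p}_U \in \mathcal{W}$ then $L$ would normalize the proper unipotent subgroup $U$, which is impossible for a simple Lie group.

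The heart of the argument is condition (a). Let $\tau : L \to \GL(W)$ be an $\R$-rational representation and suppose $w \in W$ is fixed by $\tau(H)$ and lies in $W^{\leq}$. Decomposing $W$ into irreducible $\SL_k$-components, it suffices to show that for every nontrivial irreducible representation $V_\lambda$ with highest weight $\lambda \neq 0$ (normalized so $\lambda_1 \geq \cdots \geq \lambda_k$ and $\sum_i \lambda_i = 0$), the subspace $V_\lambda^H$ contains no nonzero vector in $V_\lambda^{\leq}$. Here I would invoke the classical fact that $V_\lambda^H$ is the irreducible representation of the Levi $L_P = S(\GL_m \times \GL_n)$ of the standard block parabolic $P$ (the stabilizer of $\spa(e_1,\ldots,e_m)$) with highest weight $\lambda$. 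Consequently, every $T$-weight $\mu = (\mu_1, \ldots, \mu_k)$ of $V_\lambda^H$ has $(\mu_1, \ldots, \mu_m)$ in the convex hull of permutations of $(\lambda_1, \ldots, \lambda_m)$ and $(\mu_{m+1}, \ldots, \mu_k)$ in the convex hull of permutations of $(\lambda_{m+1}, \ldots, \lambda_k)$; in particular $\mu_i \in [\lambda_m, \lambda_1]$ for $i \leq m$ and $\mu_{m+j} \in [\lambda_k, \lambda_{m+1}]$ for $j \leq n$.

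The $A$-weight of such a weight vector equals $\sum_{i \leq m} r_i \mu_i - \sum_{j \leq n} s_j \mu_{m+j}$. Combining the coordinate bounds above with the positivity of the $r_i, s_j$ and the normalizations $\sum r_i = \sum s_j = 1$ yields $\sum r_i \mu_i \geq \lambda_m$ and $\sum s_j \mu_{m+j} \leq \lambda_{m+1}$, so the $A$-weight is at least $\lambda_m - \lambda_{m+1} \geq 0$ by the dominance of $\lambda$. The decisive step is the equality analysis: if this $A$-weight vanishes, then the positivity of each $r_i$ together with the constraint $\sum_{i \leq m} \mu_i = \sum_{i \leq m} \lambda_i$ forces $\mu_i = \lambda_m$ for all $i \leq m$ and hence $\lambda_1 = \cdots = \lambda_m$; analogously $\lambda_{m+1} = \cdots = \lambda_k$; and finally $\lambda_m = \lambda_{m+1}$. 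Combined with $\sum_i \lambda_i = 0$, this yields $\lambda = 0$, a contradiction. Hence for $\lambda \neq 0$ every $A$-weight on $V_\lambda^H$ is strictly positive, so $V_\lambda^H \cap V_\lambda^{\leq} = 0$, verifying (a). I expect the main obstacle to be the identification of $V_\lambda^H$ as an irreducible $L_P$-module with the required coordinate bounds on its $T$-weights; once this representation-theoretic input is in place, the rest is routine linear algebra.
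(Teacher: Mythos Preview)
Your proposal is correct and follows precisely the paper's strategy: apply Proposition~\ref{prop: november condition} with $L=G$, dispatch (b) and (c) as immediate, and verify (a) by representation theory. The only difference is that the paper cites \cite[Lemma 2.3]{di} for (a), whereas you supply the argument directly; your identification of $V_\lambda^H$ with the irreducible $L_P$-module of highest weight $\lambda$ is standard, and the subsequent weight estimate and equality analysis are sound.
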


\begin{proof}
One can use Proposition \ref{prop: november condition} with $L=G$. Conditions (b) and (c)
are immediate, and (a) follows from the proof of \cite[Lemma 2.3]{di}. 
\end{proof}

See Corollary \ref{cor: E} for a \di\ application of the above proposition.

\ignore{.,
and say that \amr\ is  
 {\sl \ba\/} if
$$ \inf_{\p\in\Z^m,\,\vq\in\Z^n\nz} \|Y \vq - \vp\|^m  
\|\vq\|^{n}    > 0\,.
$$
Dani \cite{Dani-div}  
showed that \amr\  is \ba\ if and only if $u_Y\Gamma \in E(F^+,\infty)$ where
$G$, $\Gamma$ and $F$ are as in case ($*$) with $k = m+n$, and (here $I_{\ell}$ stands for the $\ell\times \ell$ identity matrix).
This way he could use Schmidt's result on thickness of the set of \ba\ matrices
to construct bounded trajectories in this particular case.

Now, more generally,  consider $G,\Gamma$ as in  ($*$)
and let $F = 
 \{g^{(\vr,\vs)}_t\}$ where
\eq{new defn gt}{
g^{(\vr,\vs)}_t =  \diag(e^{r_1t}, \ldots,
e^{r_{m} t}, e^{-s_1t}, \ldots,
e^{-s_{n}t} )\,,
}
with $\vr\in\R^m$ and $\vs\in\R^n$ as in 
\equ{def rs}.
 Generalizing Dani's result, one can show \cite{K-matrices}
 that, for $F$ as above,  $u_Y\Gamma \in E(F
 ^+,\infty)$ if and only if
$Y\in\BA(\vr,\vs)$, as defined in \equ{def bars}.  
Note that the group 
\eq{group di}{\{u_Y : Y \in M_{m,n}\}} coincides with $H^+
$ when 
\eq{uniform}{
\vr = \left( \tfrac1m,\dots,\tfrac1m \right)\text{ and }
\vs =  \left( \tfrac1n,\dots,\tfrac1n \right)\,,
}
which corresponds to the case considered by Dani, 
and is a proper subgroup of 
$H^+
$ when some components of either $\vr$ or $\vs$ are different.

This immediately positions Corolary \ref{cor: E} as a special case of Theorem \ref{thm: more general}.
 Consequently, for any countable sequence of 
matrices $\{Y_k\}$, the intersection  
$\cap_k  \big(\BA(\vr,\vs) + Y_k\big)$
is thick. 
Note
that the present approach is interesting even  when $\vr$ and $\vs$ are as in
\equ{uniform}, since it yields an alternative 
proof of the aforementioned Schmidt's result, bypassing some 
rather difficult technical steps.
}

\section{Bounded trajectories are winning}
\name{winning}
In this section we will prove Theorem \ref{thm: more general};
throughout this section the assumptions will be as in the theorem.
We will need the following lemmas.

\ignore{
\begin{lem}\name{lem: finite covers}
Suppose $\Gamma' \subset \Gamma$ is also a lattice in $G$, $p: G/\Gamma'
\to \ggm$ the natural projection, and $\bar{x} \in G/\Gamma', \,
x=p(\bar{x}) \in \ggm$. Denote by $E'(F^+,\infty)$ the set of points
$z \in G/\Gamma'$ for which $\cl{F^+z}$ is bounded. Then $\{h\in H :
hx \in  E(F^+,\infty)\}$  is a winning set for the MSG induced by $\FF
$ if and only if so is $\{h\in H : h\bar{x} \in  E(F^+,\infty)\}$. 
\end{lem}

\begin{proof}
This follows immediately from two facts: $X' \subset G/\Gamma'$ is
bounded if and only if $p(X)$ is bounded; $p$ intertwines the
$H$-action on $G/\Gamma'$ and $\ggm$ respectively. 
\end{proof}
}

\begin{lem}
\name{lem: polynomial maps}
Let $\| \cdot \|$ denote a norm on $\R^n$, and also, for a
polynomial $\varphi: \R^{\ell} \to \R^n$, let $\|\varphi\|$ denote the maximal
absolute value of a coefficient of $\varphi$.  
Given natural numbers $r, \ell, n, d$ and a ball 
$B' \subset
\R^{\ell}$, 
there are positive constants $\eta, c$
such that the following holds. Suppose for $j=1, \ldots, r$ that $\varphi_j:
\R^{\ell} \to \R^{n_j}$ is a polynomial map in $\ell$ variables of
degree at most $d$, with $n_j \leq n$.
Then there is a ball 
$B \subset B'$ of diameter $c$ such that for $j=1, \ldots, r$, 
$$\x \in B \ \implies \ \|\varphi_j(\x)\| \geq \eta  \|\varphi_j\|.$$
\end{lem}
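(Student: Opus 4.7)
The approach I would take is based on two ingredients: (1) a single-polynomial version of the claim, proved by a compactness/scaling argument exploiting the equivalence of norms on a finite-dimensional space of polynomials; and (2) iterating the single-polynomial result $r$ times, shrinking the ambient ball at each stage.

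First I would prove the case $r=1$: given a ball $B^* \subset \R^\ell$, there are constants $c^*,\eta^* > 0$ (depending only on $\ell, d, n, B^*$) so that any nonzero polynomial map $\varphi: \R^\ell \to \R^{n'}$ with $n' \leq n$ and $\deg \varphi \leq d$ admits a ball $B \subset B^*$ of diameter $c^*$ with $\|\varphi(\x)\| \geq \eta^* \|\varphi\|$ on $B$. After rescaling we may assume $\|\varphi\| = 1$, so $\varphi$ ranges over a compact subset of the finite-dimensional vector space $W$ of polynomial maps of degree $\leq d$ into $\R^n$ (embedding $\R^{n'}$ into $\R^n$). On $W$ the coefficient norm $\varphi \mapsto \|\varphi\|$ and the sup-norm $\varphi \mapsto \sup_{\x \in B^{**}} \|\varphi(\x)\|$, taken over a concentric ball $B^{**}$ of, say, half the radius of $B^*$, are equivalent, so there is a uniform $M > 0$ with $\sup_{B^{**}} \|\varphi\| \geq M$. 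Choose $\x_\varphi \in B^{**}$ realizing this sup; a neighborhood $B(\x_\varphi,c^*) \subset B^*$ of $\x_\varphi$ of uniform radius works, because the family of unit-norm polynomial maps of bounded degree into $\R^n$ has a uniform modulus of continuity on any compact set (equivalently, uniform bounds on derivatives). Shrinking $c^*$ if necessary we can arrange $\|\varphi(\y) - \varphi(\x_\varphi)\| < M/2$ on $B(\x_\varphi,c^*)$, giving the claim with $\eta^* = M/2$.

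Next I would iterate. Set $B_0 \df B'$. Apply the $r=1$ case to $\varphi_1$ on $B_0$ to produce $B_1 \subset B_0$ of diameter $c_1$ on which $\|\varphi_1\| \geq \eta_1 \|\varphi_1\|$. Apply it again to $\varphi_2$ on $B_1$ to produce $B_2 \subset B_1$ of diameter $c_2$ on which $\|\varphi_2\| \geq \eta_2 \|\varphi_2\|$, and so on. The crucial point is that the constants $c_{j},\eta_{j}$ produced at stage $j$ depend only on $\ell, d, n$ and on the ball $B_{j-1}$ (hence on $c_{j-1}$), but not on the polynomials themselves; so after $r$ iterations we obtain a ball $B \df B_r \subset B'$ of positive diameter $c \df c_r$ on which all $\varphi_j$ satisfy $\|\varphi_j(\x)\| \geq \eta_j \|\varphi_j\| \geq \eta \|\varphi_j\|$ with $\eta \df \min_j \eta_j > 0$, as required.

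The only real care needed is in the single-polynomial step, where one must handle both the boundary issue (guaranteeing the ball around $\x_\varphi$ stays inside $B^*$, handled by passing to the concentric half-ball $B^{**}$) and the uniformity of the modulus of continuity (handled by noting that on the unit sphere of $W$, which is compact, derivatives are uniformly bounded on compact sets). Neither presents a genuine obstacle; the main subtlety is simply keeping track that at each iteration the new constants depend only on the previous ball's size, not on the next polynomial, so that the $r$-fold iteration terminates with positive constants depending only on $r,\ell,n,d,B'$.
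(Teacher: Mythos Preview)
Your proof is correct and follows essentially the same approach as the paper's: equivalence of norms on the finite-dimensional space of bounded-degree polynomials to locate a point where $\|\varphi\|$ is large, a uniform derivative/modulus-of-continuity bound to pass to a ball around that point, and induction on $r$ to handle several polynomials by successively shrinking the ball. The paper additionally reduces to the scalar case $n=1$ and writes the radius explicitly as $c=\min(\eta'/2C,\diam(B'))$ via a gradient bound $\|\nabla\varphi\|\le C\|\varphi\|$, while you instead embed $\R^{n'}\hookrightarrow\R^n$ and use the half-ball trick to keep the sub-ball inside $B^*$; these are cosmetic differences only.
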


\begin{proof}
By induction we may assume that $r=1$, i.e. that there is only one
polynomial map, which we denote by $\varphi$. It is easy to see that the general
case follows from the case $n=1$. 

Note that
the space of all possible $\varphi$'s is a finite dimensional real vector
space. Since the two norms $\|\varphi\|$ and $\varphi \mapsto \max_{\x \in B'}
|\varphi(\x)|$ are equivalent, there is $\eta'>0$ (independent of $\varphi$) such
that $B'$ contains a point $\x'$ such that $|\varphi(\x')| \geq \eta' \|\varphi\|.$
By continuity and compactness, there is $C>0$
such that for any $\varphi$, $\|\nabla \varphi \| \leq C \|\varphi\|$. Now take $\eta =
\eta'/2$ and $c = \min\big(
\eta' /2C,\diam(B')\big)$. On any ball of diameter $c$ the variation of $\varphi$ is
at most $\eta' \|\varphi\|/2$. Thus we may take $B$ to be a sub-ball of $B'$
of diameter $c$ containing $\x'$. 
\end{proof}

In our application the polynomial maps will come from the
representation $\rho$ defined at the beginning of $\S 4$. 
Given $\vv \in V$,
define 
$$\varphi = \varphi^\vv: H \to \R^{n}, \ \ \ \varphi(h) \df \rho\left ( h \right)\vv\,.$$  
We will think of $\varphi$ as a polynomial map in the following sense. Since
$H$ is unipotent, the exponential map $\goth{h} \df \Lie(H) \to H$ is a 
polynomial. Fix a basis $X_1, \ldots, X_{\ell}$
for $\goth{h}$, where $\ell =
\dim H$, and for $\varphi$ as above consider
$$\til \varphi : \R^{\ell} \to \R^{n}, \ \ \til \varphi(s_1, \ldots, s_{\ell}) =
\varphi\left(\exp\left(\sum^{\ell} s_iX_i\right) \right). $$
When discussing $\varphi$ as a polynomial (e.g. when discussing the degree
or coefficients of $\varphi$) we will actually mean $\til \varphi$. Since $\rho
\circ \exp = \exp \circ \,d\rho$, the
degree of $\varphi$ is no more than $\dim (V) -1$. Let $\Pi: V \to V^{>}$ be the projection
corresponding to the direct sum decomposition $V = V^\leq \oplus
V^{>}$. One can fix the norm $\| \cdot \|$ on $V$ and choose 
$a_0 > 0$ so that
\eq{norm requirements}{\vv \in V,\ t\geq a_0 \ \ \implies \ \ \|\vv\| \geq
\| \Pi \vv\|, \ \|\rho(g_t) \Pi \vv \| \geq \|\Pi \vv\|.
}

\begin{lem}
\name{lem: polys nonzero}
For any $\vre_1>0$ there is
$\vre_2>0$ such that if $\vv 
\in \mathcal{W}, \ \|\vv\| \geq
\vre_1$, then at least one of the coefficients of $\Pi \circ \varphi^{\vv}$ is
greater 
than $\vre_2$. 

\end{lem}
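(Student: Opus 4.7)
My plan is to exploit the $(F^+,\Gamma)$-expanding hypothesis on $H$ together with compactness of $\mathcal{W}$ and linearity of the assignment $\vv \mapsto \Pi \circ \varphi^\vv$.

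First, I would observe that for every $\vv \in \mathcal{W}$, the polynomial map $\Pi \circ \varphi^\vv : H \to V^>$ is not identically zero. This is exactly where the $(F^+,\Gamma)$-expanding hypothesis \equ{eq: defn expanding} is used: since $H$ is $(F^+,\Gamma)$-expanding and $\vv \in \mathcal{W}$, we have $\rho(H)\vv \not\subset V^\leq$, so there is some $h \in H$ with $\Pi \rho(h)\vv \neq 0$, i.e., $(\Pi \circ \varphi^\vv)(h) \neq 0$.

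Next, I would note that $\vv \mapsto \til\varphi^\vv$ is a linear map from $V$ to the (finite-dimensional) space $\mathcal{P}$ of $\R^{\dim V}$-valued polynomials on $\R^\ell$ of degree at most $\dim V -1$, because $\rho \circ \exp = \exp \circ \, d\rho$ depends linearly on $\vv$ for each fixed value of the exponential coordinates $s_1,\ldots,s_\ell$. Composing with the fixed linear projection $\Pi : V \to V^>$ and passing to coefficients is still linear, hence continuous. The "max absolute coefficient" functional on $\mathcal{P}$ is a genuine norm on a finite-dimensional vector space, so the composite
\[
\Theta : V \longrightarrow [0,\infty), \qquad \Theta(\vv) \df \|\Pi \circ \varphi^\vv\|
\]
is continuous.

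Finally, I would combine these with compactness. Since $\mathcal{W}$ is a compact subset of $V$, the set $K_{\vre_1} \df \{\vv \in \mathcal{W} : \|\vv\| \geq \vre_1\}$ is compact. By the previous paragraph $\Theta$ is continuous, and by the first paragraph $\Theta(\vv) > 0$ for every $\vv \in \mathcal{W}$, in particular on $K_{\vre_1}$. Therefore $\Theta$ attains a positive minimum $\vre_2 > 0$ on $K_{\vre_1}$, which is precisely the asserted uniform lower bound. The whole argument is essentially a one-line compactness-and-continuity observation once the expanding hypothesis is used to rule out $\Theta = 0$; I do not anticipate any real obstacle beyond verifying that the assignment $\vv \mapsto \Pi \circ \varphi^\vv$ can really be viewed as a continuous map into a finite-dimensional normed space, which follows from polynomiality of $\exp$ on the unipotent group $H$.
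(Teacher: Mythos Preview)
Your proposal is correct and follows essentially the same approach as the paper: both arguments use that the assignment $\vv \mapsto \Pi \circ \varphi^{\vv}$ is linear (hence continuous into a finite-dimensional space of polynomials), that the expanding condition \equ{eq: defn expanding} is exactly the statement that this map does not vanish on $\mathcal{W}$, and that compactness of $\{\vv \in \mathcal{W} : \|\vv\| \ge \vre_1\}$ then yields a uniform positive lower bound. Your write-up is in fact slightly more careful than the paper's, which phrases the key point as ``injectivity'' of the linear map rather than the more precise ``nonvanishing on $\mathcal{W}$'' that the expanding hypothesis actually gives.
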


\begin{proof}
Since the set 
$\{\vv  \in \mathcal{W}: \|\vv\| \le \vre_1\}$
is compact, it suffices to show that the linear map $\vv \mapsto \Pi
\circ \varphi^\vv$
is injective; but unravelling definitions, one sees that this is
precisely equivalent to \equ{eq: defn expanding}. 
\end{proof}

\begin{proof}[Proof of Theorem \ref{thm: more general}]
We are given an admissible $D_0$. 
Our goal is to find $\A > 0$ such that for any $g\in G$, the set \equ{efh}
is $\A$-winning for the $\{\cd_t\}$-game with $\{\cd_t\}$ as in \equ{def dt},
where $x = \pi(g)$. Let $B' \subset D_0$ be a ball, let $n = \dim V$, let $d=n-1$,
and take $r = 1$ if $\rank_\R(G) = 1$, and $r = \rank_\Q(G)$ otherwise.  
Then  let $c, \eta$ be the constants as in Lemma \ref{lem: polynomial maps} corresponding to
$r
,\ell, n,d$ and $B'$. Replacing ${g}$ with $h{g}$
for an appropriate $h \in H$ we may assume that $e \in B'$. 
Since the maps $\Phi_{t}$ are contracting,
we may choose $a' > a_0$, with $a_0$ as in \equ{norm requirements}, large enough so
that for any $\A > a'$, any ball in $B'$ of diameter $c$ contains a right-translate of
$\Phi_{\A}(D_0)$. 

Now given any choice of $\B>0$, an initial $t_0 > 0$, $x=\pi(g)\in\ggm$
and any $B_1$ which is a right-translate of $\Phi_{t_0}(D_0)$, we need to
describe a compact $K \subset \ggm$ and a strategy for Alice, such
that for the choices of sets $B_1, B_2, \ldots$ by Bob, there are
choices of sets $A_{i} \subset B_{i}$ for Alice with
\eq{need to show rk1}{
h \in A_i,\, i \in \N \ \ \implies \ \ g_ih{x} \in
K,
}
where 
\eq{def gi}{g_i \df g_{t_0+i(\A+\B)}\,.}
This will ensure that the trajectory $F^+h_{\infty}{x}$ is bounded,
where $h_{\infty}$ is the intersection point \equ{int}. The
definition of $K$ and the description of the strategy will now be given
separately for the rank-one and arithmetic cases.

\medskip

\noindent{\bf The rank-one case.} Assume the notation of
\S\ref{rk1}. By boundedness of $B_1$ and
Proposition \ref{prop: cc, rank1}(2) there is $\delta_1>0$ so that 
\eq{eq: property of delta1}{
c \in C, \, h \in B_1, \, \gamma \in \Gamma \  \ \implies \ \
\|\rho(g_1h{g}\gamma c^{-1})\mathbf{p}_U\| \geq \delta_1.}
Let $\vre_0$ be as in Proposition \ref{prop: cc, rank1}(3). Since
$D_0$ is bounded, by making $\delta_1$ smaller we can assume that 
$\delta_1<\vre_0$ and 
\eq{eq: property of delta2}{
h \in g_{\A}D_0D_0^{-1} g^{-1}_{\A},\ g' \in G,\ \vv \in V,\  \|\rho(hg')\vv \| < \delta_1 \implies
\|\rho(g')\vv \| < \vre_0. 
}
Using Lemma \ref{lem: polys nonzero}, choose $\delta_2$ so that 
\eq{choice of delta3}{
\vv  \in \mathcal{W}, \, \|\vv\| \geq \delta_1 \implies \|\Pi
\circ \varphi^{\bar{\vv}} \| \geq \delta_2, \ \ \mathrm{where} \ \bar{\vv} = g_{\B}\vv.
}
Now let 
$$\delta_3 = \min \{\eta \delta_2, \delta_1\},$$
where $\eta$ is as in Lemma \ref{lem: polynomial maps}, and 
$$K =
\pi\left(\left\{g' \in G: \forall \gamma \in \Gamma, c \in C,
\|\rho(g' \gamma c^{-1})\mathbf{p}_U\| \geq \delta_3 \right\}\right),$$
which is compact in light of Proposition \ref{prop: cc, rank1}(2). 

We claim Alice can make moves so that for each $i \in \N$, each $h \in
A_i$, each $c \in C$, and each $\gamma \in \Gamma$, at least one of
the following holds:  
\begin{itemize}
\item[(a)]
$\|\rho(g_ih{g} \gamma c^{-1}) \mathbf{p}_U \| \geq \delta_1$;
\item[(b)]
$\|\Pi \circ \rho \left
(g_ih{g} \gamma c^{-1}\right ) \mathbf{p}_U
\| \geq \delta_3$. 
\end{itemize}

Clearly this will imply \equ{need to show rk1} and conclude the proof. 

\smallskip
The claim is proved by induction. From \equ{eq: property of delta1} it
follows that (a) holds for $i=1$ and for all $h \in B_1$, so Alice can
choose $A_1$ at her whim. Assume the claim is true for $i-1$, where $i
\geq 2$. We claim that (a) can fail for $i$ with at most one vector. That
is, there is at most one vector $\vv =
\rho(\gamma c^{-1})\mathbf{p}_U$ for which there is $h \in B_{i-1}$ with 
$\|\rho(g_ih {g})\vv\| < \delta_1$. For suppose, for $\ell = 1,2$, that
$\vv_\ell=\rho(\gamma_\ell c_{\ell}^{-1})\mathbf{p}_U$ are two such vectors, with
$h_\ell \in B_{i-1}$ the corresponding elements. Then the $h_\ell$ are
both in $B_{i-1}$ which is a right translate of
$g_{\B}^{-1}g_{i-1}^{-1}D_0 g_{i-1}g_{\B}$, i.e. 
$$g_{\B} g_{i-1}h_1h_2^{-1}g_{i-1}^{-1} g_{\B}^{-1} \in D_0 D_0^{-1}.
$$
Therefore $g_ih_1 = h'g_ih_2$ where $h' = g_ih_1h_2^{-1}g_i^{-1} \in
g_{\A} D_0D_0^{-1} g_{\A}^{-1}$. In view of \equ{eq: property
of delta2} and Proposition \ref{prop: cc, rank1}(3) we must have $\vv_1 =
\vv_2$. 

So no matter how Alice chooses $A_i$, (a) will hold except
possibly for one vector $\vv = \rho(\gamma c^{-1})\mathbf{p}_U \in \mathcal{W}.$
By the induction hypothesis, either $\|\Pi \circ \rho(g_{i-1}h{g})\vv\|
\geq \delta_3$ or $\|\rho(g_{i-1}h{g})\vv\|\geq \delta_1$. If the
former occurs, it also occurs for $i$ because of \equ{norm
requirements}, implying (b). If the latter occurs,
write 
$$
B_{i-1} = g_{\B}^{-1}g_{i-1}^{-1}D_0g_{i-1}g_{\B}h_0, \ \ \mathrm{where} \
h_0 \in H\,,
$$
and set
$$\vv' = \rho(g_{i-1} h_0 {g})\vv \in \mathcal{W}, \ \ \bar{\vv} = \rho(g_{\B})\vv'\,.
$$
Then $\|\vv'\| \geq \delta_1$ so by \equ{choice of
delta3} we have $\|\Pi \circ \varphi^{\bar{\vv}} \| \geq \delta_2.$ 
By Lemma \ref{lem: polynomial maps} we can choose a ball $B \subset
B'$ of diameter $c$ so that 
\eq{choice of ball}{
h \in B \implies \|\Pi \circ \varphi^{\bar{\vv}}(h)\| \geq \eta \delta_2 \geq \delta_3.}
By the choice of $\A$, 
$$D_0 \supset B' \supset B \supset g_{\A}^{-1} D_0
g_{\A} h_1$$ 
for some $h_1 \in H$. Since 
$$B_{i-1}  \supset
 g^{-1}_{i-1}g^{-1}_{\B}B'g_{\B}g_{i-1} h_0,$$
Alice can choose 
$$A_i =   (g_\B g_{i-1})^{-1}g^{-1}_{\A} D_0 g_{\A} h_1
g_{\B}g_{i-1} h_0 = g_i^{-1} D_0 g_ih_2\,,$$
where $h_2 = g_i^{-1}g_\A h_1 g_\A^{-1}g_i h_0\in H$.
Now for all $$h = (g_\B g_{i-1})^{-1}h'g_\B g_{i-1}h_0 \in
A_i\,,$$ where $h'\in g_{\A}^{-1} D_0g_{\A} h_1 \subset B$, we have  
\[
\begin{split}
\|\Pi \circ \rho(g_ih {g} c \gamma) \mathbf{p}_U \| & \geq \|\Pi \circ
\rho(g_{\B} g_{i-1} h {g})\vv \| \\ 
& = \|\Pi \circ \rho(g_{\B}g_{i-1}h h_0^{-1}g_{\B}^{-1}g_{i-1}^{-1})\bar{\vv} \| \\
& = \|\Pi \circ \rho (h') \bar{\vv}\| \under{\equ{choice of ball}}\geq \delta_3\,,
\end{split}
\]
as required. 

\medskip

\noindent{\bf The arithmetic case.} 
Let $W^{(0)}$ be a neighborhood of $0$ in $\goth{g}$ as in Proposition
\ref{prop: algebra unipotent}. Let $W^{(1)} \subset W^{(0)}$ be a
neighborhood of $0$ in $\goth{g}$ such that 
\eq{choice of W1}{
h \in D_0D_0^{-1}, \, \vv \in W^{(1)} \ \implies \ \Ad(h)\vv \in
W^{(0)}.
}
Recall that we are given $x = \pi(g)\in\ggm$, and that the choice of $t_0 > 0$, $\B > 0$ and $\A > a'$
has defined the sequence $\{g_i\}$ as in \equ{def gi}.
Since $B$ is compact and in 
view of \equ{later},
there is $\vre_0>0$ such that 
$$
U \in \mathcal{R}
, \, h \in
B_1 \  \ \implies \ \ \|\rho
(g_1h{g})\mathbf{p}_U\|
\geq \vre_0\,.$$ 

Choose $\vre_1$ so that 
\eq{choice of epsilon1}{
\|\vv \| \geq \vre_0, \ h \in D_0D_0^{-1}, 
\ \implies \ \ \|\rho
(
h) \vv \| \geq \vre_1.
}
Let $\vre_2$ be the constant corresponding to $\vre_1$ as in Lemma
\ref{lem: polys nonzero}, let
$$\vre_3 = \min \{\eta \vre_2, \vre_0\}$$
and define  $K \subset \ggm$ to be the set of $x = \pi(g')$ such that whenever $
 M \subset \Ad(g')\goth{g}_\Z \,\cap\, W^{(1)}$ is horospherical, one has
$\|\rho
(g')\mathbf{p}_U\| \geq \vre_3$, where $U=U(M,g') 
\in \mathcal{R}
$. 
Then $K$ is compact by Corollary \ref{cor: cc with reps}. 
\smallskip

We will show how Alice 
can make moves so that for each $i \in \N$, each $h \in A_i$ 
and each
$U \in \mathcal{R}
$ for which there exists a horospherical
subset $M \subset \Ad(g_ihg)\goth{g}_{\Z} \cap W^{(1)}$ with
$U =U(M, g_ih{g})$,
at least one of the following holds: 
\begin{itemize}
\item[(i)]
$\|\rho
(g_ih{g}) \mathbf{p}_U \| \geq \vre_0$;
\item[(ii)]
$\|\Pi
\circ \rho
 \left
(g_ih{g} \right ) \mathbf{p}_U
\| \geq \vre_3$. 
\end{itemize}

Clearly this will imply \equ{need to show rk1} and conclude the proof. 

\medskip

The choice of $\vre_0$ implies that (i) holds for $i=1$, for all $h
\in B_1$. Thus Alice can make an arbitrary choice of $A_1$. We continue
inductively, assuming the statement for $i-1$, where $i \geq 2.$ First
suppose (ii) holds for $i-1$. We have 
$g_i = g_{\A+\B}g_{i-1}$ and by \equ{norm requirements},
$$\|\Pi
 \circ \rho
  \left(g_ih{g} \right ) \mathbf{p}_U
\| = \|\rho
(g_{\A+\B}) \circ \Pi
 \circ \rho
  \left
(g_{i-1}h{g} \right ) \mathbf{p}_U \| \geq 
\vre_3.
$$
Now consider the set $\mathcal{U}$ of
$U \in \mathcal{R}$ for 
which there exist $h=h\left(U\right) \in B_{i-1}$ and a horospherical subset $M
=M(U)\subset \goth{g}_{\Z}$ such that 
$$\Ad(g_{i-1}h{g})M \subset
W^{(1)},$$
and such that (ii) does not hold for $\mathbf{p}_U$ and 
$h$. Note that by induction hypothesis, after replacing $i$ with
$i-1$, statement (i) does hold for $\mathbf{p}_U$ and $h$.

We claim that 
\eq{number bounded}{
\# \mathcal{U} \leq r.
} 
To see this, in view of Proposition \ref{prop: same parabolic}, it suffices to show that 
the subalgebra $\goth{v}$ spanned by all $M(U), \, U
\in \mathcal{U}$, is unipotent. 
Fix $h_0 \in B_{i-1}$. It
will be convenient for later calculations to take for $h_0$ the
appropriate right-translate of $e \in B'$, that is 
\eq{convenient}{
B' \subset g_{i-1} (B_{i-1}h_0^{-1}) g_{i-1}^{-1}.
}
Suppose $U
\in \mathcal{U}, \, h=h\left(U\right), \, M = M\left(U\right).$
Then $B_{i-1}$ is a right-translate of $g_{i-1}^{-1}D_0g_{i-1}$, and hence 
$$g_{i-1}h_0h^{-1}g_{i-1}^{-1} \in g_{i-1}(B_{i-1}B_{i-1}^{-1})g_{i-1}^{-1} = D_0D_0^{-1}\,;$$
i.e., for $M' = \Ad(g_{i-1}h{g})M \subset W^{(1)}$ 
in view of  \equ{choice of W1} one has
$$
\Ad \left(g_{i-1}h_0{g}\right )M = \Ad \left(g_{i-1}h_0h^{-1}g_{i-1}^{-1} 
\right) M' \subset W^{(0)}.
$$

By the choice of $W^{(0)}$, this implies that the subalgebra spanned by 
$$\bigcup_{U \in \mathcal{U}} \Ad\left( g_{i-1} h_0 {g}
\right)M\left(U\right)$$
is unipotent, hence so is $\goth{v}$. This proves \equ{number bounded}.
 
\medskip

For any $U \in \mathcal{U}$
and $h=h\left(U\right) \in B_{i-1}$ it follows  that 
$$\|\rho
(g_{i-1} h {g}) \mathbf{p}_U \| \geq \vre_0\,.$$
Hence, using \equ{choice of epsilon1}, we deduce that
$$\|\vv
\| \geq \vre_1, \ \ \mathrm{where} \ \vv = \vv(U) = 
\rho
\left(g_{i-1} h_0 {g} \right) \mathbf{p}_U.$$

Consider the polynomials $h\mapsto \Pi
\circ \varphi
^{\vv(U)}(h)$ for all $U \in \mathcal{U}$. By
Lemma \ref{lem: polys nonzero} 
each has a coefficient of at least $\vre_2$ in absolute value. In view of Lemma
\ref{lem:
polynomial maps}, there is a ball $B \subset B'$ of diameter
$c$ such that $\|\Pi
\circ \varphi
^{\vv(U)}(h)\| \geq \vre_3$ for all $h
\in B$ and 
all $U \in \mathcal{U}$. By the choice of $\A$, $B$ contains a
translate $\til D$ of $g_{\A}^{-1} D_0 g_{\A}$. Now let $A_i=g_{i-1}^{-1} \til D g_{i-1}h_0$ be a move
for Alice. This is an allowed move since $A_i$ is a right-translate of
$\Phi_{t_0+(i-1)(\A+\B)+\A}(D_0)$ and since, by \equ{convenient},
$A_i \subset B_i.$ 
For each $h \in A_i$ let $h' = g_{i-1}hh_0^{-1}g_{i-1}^{-1}$. Then $h'
\in B$ and therefore 
$$
\varphi
^\vv(h')  = \rho
 \left( g_{i-1}hh_0^{-1}g_{i-1}^{-1}
\right)\rho
\left(g_{i-1} h_0 {g} \right) \mathbf{p}_U = \rho
\left(g_{i-1}h {g} \right) 
\mathbf{p}_U.
$$
Thus $\|\Pi
\circ \rho
\left(g_{i-1}h {g} \right) 
\mathbf{p}_U \| \geq \vre_3$, and, using \equ{norm requirements}, we obtain (ii).  
\end{proof}

\section{Escaping points: proof of Theorem \ref{thm: B}} 
\name{esc points}
In this and the next sections we will analyze the local geometry of
$\ggm$. 
In what follows,  $B(x,r)$ stands for the open ball centered at $x$ of
radius $r$. To avoid confusion, we will sometimes put subscripts
indicating the underlying metric space. 
If the metric space is a group and $e$ is its identity element, 
we will simply write $B({r})$ instead of $B(e,{r})$. For a subset $K$ of a metric space,
$K^{(\vre)}$ will denote the $\vre$-neighborhood of $K$.
\smallskip

Let $G$ be a real Lie group, $\g$ its Lie algebra, let $F$ as in \equ{f}
be non-quasiunipotent, and denote the Lie algebra of $F$ by
$\goth{f}$. Using the notation introduced in
\S\ref{section: F expanding groups}, 
let $\goth{g}^>$ 
be the expanding subalgebra corresponding to the representation
$\Ad: G \to \GL(\goth{g})$.  
\ignore{
Recall that one can define \commargin{Apparently some of this is needed
in the expanding subgroups section, so this has to be done in a uniform way
and moved to an earlier section}  
$\h^+$, $\h^0$, $\h^-$ to be the subalgebras of $\g$ with complexifications
$$
\h^+_{{\Bbb C}} = \text{span}(E_\lambda : |\lambda|>1),\,
\h_{{\Bbb C}}^0 = 
\text{span}(E_\lambda : |\lambda|=1),\, \h_{{\Bbb C}}^- =
\text{span}(E_\lambda : |\lambda|<1)\,,
$$
where $E_\lambda$ is the generalized eigenspace of Ad$\,g_1$ in the complexification of $\g$.}
Then $\goth{g}^>$ is exactly the Lie algebra of $H^+$ as in \equ{defn of U+}.
The fact that $F$ is not \qu\ 
implies that $\goth{g}^>$ is nontrivial. 
Let us fix a Euclidean structure on $\g$ so that $\goth{f}$ and
$\goth{g}^>$ are 
mutually orthogonal subspaces, and
transport it via right-multiplication to define
a right-invariant Riemannian metric on $G$. 
This way, the exponential map $\exp:\g\to G$ becomes almost an isometry locally around $0\in\g$.
More precisely, for any positive $\vre$ one can choose a neighborhood $U$ of $0\in\g$
such that $\exp|_U$ is $(1+\vre)$-bilipschitz.
 
If $\Gamma$ is a discrete subgroup of $G$, we will  equip
 $\ggm$ with the Riemannian metric coming from $G$. 
Then for any $x\in\ggm$ the orbit map
 $G\to\ggm$, $g\mapsto gx$ is an isometry when restricted to 
a sufficiently small (depending on $x$) neighborhood of $e\in G$. 

\smallskip
Our main goal in this section is to prove the following generalization of Theorem \ref{thm: B}:

\begin{thm}\name{thm: B more general}
Let $G$ be a Lie group, $\Gamma\subset G$ discrete, $F$ as in \equ{f},
$H^+$ as in \equ{defn of U+}. Let $H$ be a nontrivial connected
subgroup of $H^+$ normalized by $F$, and let 
$\FF$ be as in \equ{def phi} to $H$. \commargin{Barak:
Same notation as the one I introduced in Theorem \ref{thm: more general}. Dima: hope the
remark I made there is sufficient. Also, note that I removed the separate formal proof of Theorem \ref{thm: B}, 
hope it's OK.} 
Then there exists $a'' > 0$ such that for any $\A > a''$ and any $x,z\in\ggm$, the set
\eq{esc discrete}{\big\{h\in H : hx \in  E(F^+,z)\big\}}
is an $\A$-winning set for the MSG induced by $\FF$.
\end{thm}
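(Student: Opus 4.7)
My plan is to exhibit an explicit strategy for Alice in the $\{\dt\}$-modified Schmidt game that forces the limit point $h_\infty=\bigcap_k A_k$ to satisfy $\dist(g_th_\infty x,z)\ge \delta$ for every $t\ge 0$, with $\delta>0$ a small constant fixed at the outset. This would immediately give $z\notin\overline{F^+h_\infty x}$, i.e.\ $h_\infty x\in E(F^+,z)$, as required. By Proposition \ref{prop: initial domain} I am free to pick any admissible $D_0$; I would take a small ball around $e$ in $H$, small enough that the orbit map $h\mapsto hy$ is bi-Lipschitz on a neighborhood of $D_0D_0^{-1}$ for every basepoint $y\in\ggm$ that arises below, so that local computations in $\ggm$ reduce to computations in $G$.

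At step $k$, I would write Bob's move as $B_k=\Phi_{t_k}(D_0)h_k$ and reparametrize $h\in B_k$ by $h'\in D_0$ via $h=g_{-t_k}h'g_{t_k}h_k$. The identity $g_thg_{-t}=\Phi_t^{-1}(h)$, together with the fact that $H$ is normalized by $F$, gives, for $t\ge t_k$,
\[
g_thx=\Phi_{t-t_k}^{-1}(h')\,p_k(t),\qquad p_k(t)\df g_{t-t_k}g_{t_k}h_kx.
\]
Since $\Phi_{t-t_k}^{-1}$ expands $H$ at rate $\sim e^{\sigma(t-t_k)}$ (where $\sigma>0$ is the contraction exponent of $\Phi_t$ on $H$), this chart renders $B_k$ as a fixed-scale region $D_0$ in which the $F^+$-dynamics near $B_k$ are transparent.

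Alice's aim at step $k$ is to avoid the ``bad tube''
\[
T_k\df\bigl\{h'\in D_0 : \exists\,t\in[t_k,t_{k+1}],\ \Phi_{t-t_k}^{-1}(h')\,p_k(t)\in B(z,\delta)\bigr\}.
\]
For each fixed $t$, the time-slice of $T_k$ is a finite union of balls in $H$, each of radius $\sim\delta e^{-\sigma(t-t_k)}$, one per visit of the trajectory $p_k(\cdot)$ to $B(z,\delta)$ at time $t$. The union over $t\in[t_k,t_{k+1}]$ traces out a curved, rapidly thinning tube in $D_0$. For $\delta$ sufficiently small and $a$ sufficiently large, I expect $D_0\setminus T_k$ to contain a ball of diameter $\sim e^{-\sigma a}\diam(D_0)$, which Alice would take (pulled back via the chart) as $A_k$. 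Iterating, the single point $h_\infty=\bigcap_k A_k$ then satisfies the desired separation bound for every $t\in[t_1,\infty)$, and the bounded initial interval $[0,t_1]$ is handled by a compactness/continuity argument, shrinking $\delta$ if necessary.

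The main obstacle I foresee is controlling the geometric complexity of $T_k$: specifically, bounding the number of components of $T_k$, which correspond to distinct visits of $p_k$ to $B(z,\delta)$ during the (potentially long) interval $[t_k,t_{k+1}]$. Although each such visit contributes only a ball of exponentially decreasing radius $\delta e^{-\sigma(t-t_k)}$, many visits could in principle fill a large portion of $D_0$. The resolution should rely on a careful covering/counting argument using the local product structure $G\sim H^-\!\times H^0\!\times H^+$ near $e$, together with the observation that $F$-orbits cross $H$-leaves transversely and each transverse crossing consumes time at least $\sim e^{-\sigma t_k}$; this should limit the total number of relevant crossings per step and keep $T_k$ confined to a tube of measure much smaller than $\mathrm{vol}(D_0)$, so that Alice's sub-ball can be placed in its complement.
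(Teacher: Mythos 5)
Your high-level plan — force the limit point to stay at a definite distance from $z$ by exploiting the transversality of $F$-orbits to $H$-leaves — is the same basic idea as the paper's, and you have correctly identified the reparametrization that reduces each step to a fixed-scale picture in $D_0$. However, I see a genuine gap at precisely the point you flag as the ``main obstacle,'' and your proposed resolution does not close it. You aim to show that the bad tube $T_k$ has small \emph{measure} (counting crossings, each of small width), and conclude that ``Alice's sub-ball can be placed in its complement.'' That deduction is not valid: a set of small measure that is spread across $D_0$ (say, a thin curve zigzagging through the domain) can block every translate of $\Phi_{\A}(D_0)$ from fitting in the complement. What the argument actually needs is a \emph{diameter} bound on the bad set, not a measure bound, and your crossing-count heuristic does not give that.

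The paper supplies exactly this missing ingredient. Its Lemma~\ref{transv} is a quantitative transversality statement: for $Z_T = \{g_t z : 0 \le t \le T\}$ and suitable $\delta = \delta(z,T)$, the intersection of a small $H$-ball $B_H(\delta)y$ with $Z_T^{(\vre/8)}$ has diameter at most $\vre$. This is a small-\emph{diameter} bound, and it makes the number of distinct ``crossings'' irrelevant: however many times the trajectory of the basepoint approaches $z$, the whole bad region in the $H$-leaf is confined to a single $\vre$-neighborhood. The paper also streamlines the combinatorics with two moves you do not use: (i) it replaces your continuous-time condition by the discrete-time condition $\dist(g_{kT} h_\infty x, Z_T) > \eta$, which suffices because any accumulation of $g_{t_i} h_\infty x$ on $z$ forces accumulation of $g_{\ell_i T} h_\infty x$ on the compact curve $Z_T$; and (ii) instead of asking Alice to find a ball inside a complicated complement, it fixes in advance two translates $D_1, D_2 \subset D_0$ of $\Phi_\A(D_0)$ a definite distance $\vre_0$ apart (equation~\equ{two choices}), and uses the diameter bound to guarantee that at least one of $\Phi_{NT}(D_1)g_{kT}hx$, $\Phi_{NT}(D_2)g_{kT}hx$ misses $Z_T^{(\eta)}$. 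This ``two disjoint choices'' device is what turns transversality into a clean winning strategy; your plan, as written, has no analogue of it and would need the unproved diameter bound on $T_k$ together with a pigeonhole argument for locating $A_k$ in the complement.
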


\begin{proof}
Recall that we are given an admissible $D_0 \subset H$, and our
goal is to find $\A$ such that the set \equ{esc discrete}
is $\A$-winning for the $\{\cd_t\}$-game with $\{\cd_t\}$ as in \equ{def dt}. 
From the admissibility of $D_0$ and contracting properties of $\{\Phi_t\}$ it 
follows that there exist positive $a''$ and $\vre_0$ such that
for any $\A > a''$,
\eq{two choices}{\begin{aligned}D_0\text{ contains two translates $D_1,D_2$ of }\Phi_\A(D_0)
\\
\text{of distance at least }\vre_0\text{  from each other.} \quad \end{aligned}}
We claim Alice 
has a winning strategy for $\A> a''$. 

Given $T > 0$, define $Z_T \df \{g_tz : 0 \le t \le T\}$. This is a compact curve in $\ggm$,
with the crucial 
property that for any $x\in Z_T$, the tangent vector to $Z_T$ at $x$
(the flow direction) is 
orthogonal to the tangent space to $Hx$ at $x$. This is exploited in
the following

\begin{lem}\name{transv} For any $z\in\ggm$ and $T >0$ there exist
$\delta  = \delta(z,T)> 0$ 
with the following property: for any $ 0 < \vre \leq \delta$  
and any $x\in\ggm$, the intersection of $B_H(\delta)x$ with 
$Z_T^{(\vre/8)}$ has diameter at most $\vre$. 
\end{lem}

\begin{proof} \commargin{Barak: I wrote a new proof, see if you like
it. Dima: I like it, but please check my numerous minor corrections.}
Let $\goth{l}$ be the subspace of $\goth{g}$ perpendicular to $\goth{f}
\oplus \goth{h}$, so that we may write 
$$\goth{g} = \goth{f} \oplus \goth{l} \oplus \goth{h}.$$
The map 
$$
\psi: \goth{g} \to G, \ \ \psi(\vf + \vl+ \vh) = \exp(\vf)\exp(\vl) \exp(\vh)
$$
(where $\vf \in \goth{f}, \, \vl \in \goth{l}, \, \vh \in \goth{h}$) has
the identity map as its derivative at $0 \in \goth{g}$. Hence in a
sufficiently small neighborhood of $0$, it is a homeomorphism onto its
image and is 2-bilipschitz. Since the map $G \to \ggm, \ g \mapsto gx$
is a local isometry for each $x \in \ggm$, there is a
neighborhood $W=W_x$ of $0$ in $\goth{g}$ such that the map 
$$p_x: W \to \ggm, \ \vv \mapsto \psi(\vv)x$$ 
is injective, has an open image, and is 2-bi-Lipschitz. By compactness
of $Z_T$ we can choose $W$ uniformly for all $x \in Z_T^{(1)}$.
Choose $\delta<1$ small enough so that $B_G(2\delta) \subset
\psi(W)$. 

The order of factors in the definition of $\psi$ was chosen so that
if $y_1, y_2 \in \psi(W)$ are in the same $F$-orbit (i.e. $y_1
= g_ty_2$ for some $t$) then $\psi^{-1}(y_1) -
\psi^{-1}(y_2) \in \goth{f}$. Using this, we can now make $\delta$
and $W$ small enough so that if $B_H(\delta)x \cap Z_T^{(\delta)} \neq
\varnothing$, then also $Z_T \cap p_x(W) \neq
\varnothing$,  and $y_1, y_2 \in p_x(W) \cap Z_T$ implies
$p_x^{-1}(y_1) -p_x^{-1}(y_2) \in  \goth{f}$. 

If the claim is false then there are $\vre \leq \delta$, $x \in \ggm$,
$h_1, h_2 \in B_H(\delta)$, $y_1, y_2 \in p_x(W) \cap Z_T$ such that
$$\dist(h_ix, y_i) < \vre/8, \ \ i=1,2$$
and
$$\dist(h_1x, h_2x) > \vre.$$
Let $\vv_i = p_x^{-1}(y_i), \vh_i  = p_x^{-1}(h_ix) =
\exp^{-1}(h_i)$. Since the map $p_x$ is 2-bi-Lipschitz, 
\eq{eq: for triangle inequality}{
\|\vh_i- \vv_i\| < \vre/4 \ \ \mathrm{and} \ \|\vh_1- \vh_2\| > \vre/2\,.}
By the previous observation, $\vv_1, \vv_2$ belong to the same affine
subspace $\vv_0+ \goth{f}$. Without affecting \equ{eq: for triangle
inequality}, we can replace each $\vv_i$ with the point in
$\vv_0+\goth{f}$ closest to $\vh_i$. But since $\vh_i 
\in \goth{h}$ and $\goth{h}$ is orthogonal to $\goth{f}$, these two points
are the same, i.e.\ $\vv_1= \vv_2$. But now \equ{eq: for triangle
inequality} yields a contradiction to the triangle inequality. 
\end{proof}
\ignore{Without loss of generality, replacing
if necessary $t$ by $ct$ and $T$ by $T/c$, we can assume that $F$ is parametrized so that
$\dist_G(e,g_t) = t$. First choose $\delta_1 > 0$ with the following property:
%
$$ \forall \delta < \delta_1 \ \forall y_1,y_2\in Z_T \text{ with }\dist(y_1,y_2) \le \delta\quad
\exists\, t\in [-\delta,\delta]  \text{ with } g_ty_1 = y_2\,;
$$
this is possible since $Z_T$ is compact. 
Also let $K$ be the $1$-neighborhood of $Z_T$ in $\ggm$, and choose 
$0 < \delta < \frac13\min(\delta_1,1)$ such that for any $x \in K$, the map 
$g\mapsto gx$ is an injective isometry on $B_G(2\delta)$, and 
the exponential map $\frak g \to G$ is $2$-bilipschitz on $B_{\frak g}(4\delta)$. 
This can be done in view of the compactness of $K$.
Now denote $B = B_H(\delta)$ 
and choose
$x\in\ggm$ with  $Bx\cap (Z_T)^{(\vre/8)}\ne\varnothing$ (if this is impossible, 
the claim is trivially satisfied). Suppose, contrary to the claim, that there exist
$h_1,h_2\in B(\delta)$ and $y_1,y_2\in Z_T$ with $\dist(h_ix,y_i) <
\vre/8$ and $\vre < \dist(h_1,h_2).$
Then $\dist(h_1x, h_2x) =\dist(h_1, h_2) \leq 2\delta$, so by
the triangle inequality,  
\[
\begin{split}
\dist(y_1, y_2) & \leq \dist(y_1, h_1x)+\dist(h_1x,
h_2x)+\dist(h_2x,y_2)  \\ 
& < 2\vre/8+2\delta < \delta_1.
\end{split}
\]
This implies that $g_ty_1 = y_2$ for some  $t\in [-3\delta,3\delta]$
and $y_i\in B(x, 2\delta)$. 

\commargin{Please check -- I am still not completely satisfied..}

Now let $f$ be the map $gx\mapsto \exp^{-1}(g)$, $\ggm\to \g$, which,
according to our choice of $\delta$,  is $2$-bilipschitz on $Bx$.
Hence $f(h_ix)$ belong to 
the $\vre/4$-neighborhood of $f(Z_T)$, 
and also $$\dist\big(f(h_1x),f(h_2x) \big) > \vre/2\,.$$ This
contradicts the orthogonality of the Lie algebras of $H$ and $F$. 
}

Now given any choice of $\B>0$, an initial $t_1 > 0$ 
and any $B_1$ which is a right-translate of $\Phi_{t_1}(D_0)$, we are going to
find $\eta >0$ and $N\in\N$ such that for any choices made by Bob,
there are choices $A_i\subset B_i$ for Alice  
such that 
\eq{need to show pts}{
h \in A_{N+k},\, k \in\N \ \ \implies \ \dist( g_{kT}hx,Z_{T}) > \eta\,,
}
where $T = \A + \B$. It follows from \equ{need to show pts} that the
closure of the trajectory $F^+h_{\infty}x$, where $h_{\infty}$ is the 
intersection point \equ{int}, does not contain $z$. Indeed, if $g_{t_i}h_{\infty}x$
converges to  $z$, then we would have $\dist(g_{\ell_iT}h_{\infty}x,g_{t_i'}z) \to 0$
where $t_i = \ell_iT - t_i'$ with $0 \le t_i' < T$ and $\ell_i\in\N$, 
contradicting \equ{need to show pts}.

To this end, let $N$ be an integer such that the diameter of
$\Phi_{NT}(D_0)$ is less than $\delta =  \delta(z,T)$ as in Lemma \ref{transv}, let 
$$\lambda \df \inf_{h_1,h_2\in D_0, h_1 \ne h_2} \frac
{\dist\big(\Phi_{NT}(h_1),\Phi_{NT}(h_2)\big)}{\dist(h_1,h_2)}\,
$$
be the Lipschitz constant for $\Phi_{NT}|_{D_0}$, and choose 
$$\vre  = \min \big(\tfrac12 \lambda\vre_0, \delta \big), \ \ \ \eta = \vre/8,
$$
where $\vre_0$ is as in \equ{two choices}.

We now describe Alice's strategy. For her first $N$ moves she makes
arbitrary choices. For $k \in \N$, suppose that Bob's  
choice is $ B_{N+k} = \Phi_{(N+k)T}(D_0)h$. Let $D_1,D_2$ be as in
\equ{two choices}. Then $E_i = \Phi_{(N+k)T}(D_i)h$, $i = 1,2$,
are both valid options for Alice's 
next move.
Also, 
\eq{transl ei}{g_{kT}E_ix = g_{kT}\Phi_{(N+k)T}(D_i)hx =
\Phi_{NT}(D_i)g_{kT} hx\,,}
and $\Phi_{NT}(D_i)$ are two elements in $\cd_{NT + \A}$ inside $\Phi_{NT}(D_0)$
of distance at least $\lambda\vre_0$ from each other. Furthermore, the diameter
of $\Phi_{NT}(D_0)$ is less than $\delta$, so by Lemma \ref{transv}, 
the intersection of $$\Phi_{NT}(D_0)g_{kT}hx = g_{kT}B_{N+k}x$$
with $Z_T^{(\vre/8)}$ has diameter less than $\frac12 \lambda\vre_0$.
Therefore at least one of the sets \equ{transl ei} is disjoint from
$Z_T^{(\vre/8)}$, i.e.\ is at distance at least $\eta$ 
from $Z_T$. Alice chooses the corresponding $E_i$, and \equ{need to show pts} holds. 
\end{proof}

We conclude the section with the

\begin{proof}[Proof of Corollary \ref{cor: D}] In view of  the
countable intersection property of winning sets, see Theorem \ref{thm: countable general}(a),
the choice $\A> a_0 \df
\max(a',a'')$, where $a', a''$ are the constants furnished by
Theorems \ref{thm: C} and \ref{thm: B} respectively, shows that the set \equ{efzinftyh}
is an $\A$-winning set for the MSG induced by $\FF$.
\end{proof}

\ignore{
Using the countable intersection properties of winning sets, it is now immediate that
whenever
$X_0,Z$ are countable subsets of $\ggm$ and $H$ is as in Thereom \ref{thm: B more general}
 the set
$$\bigcap_{x\in X_0}\left\{h \in H :  gx\in E(F^+,Z)\right\}$$
is also an $\A$-winning set as long as $\A > a''$. Furthermore, if we take $G,\Gamma$ 
as in Theorem \ref{thm: C} and $H$ to be  $(F^+,\Gamma)$-expanding 
and let $a_0 = \max(a',a'')$, then for any  $\A > a_0$ the same can be said about the set
\equ{efzinftyh} with $H^+$ replaced by $H$. Since $H = H^+$ gives an
example of an
 $(F^+,\Gamma)$-expanding subgroup, this furnishes a proof of
Corollary \ref{cor: D}.}

\section{Proof of Theorem \ref{cor: F}}\name{margconj}
The goal of this section is to reduce Theorem \ref{cor: F} to Corollary \ref{cor: D}.
We will do it in several steps. We denote by
\eq{defboxdim}{\underline{\dim}_B (X)\df \liminf_{\vre \to 0} \frac{\log
N_X(\vre)}{-\log \vre},\ \ 
\overline{\dim}_B(X) \df \limsup_{\vre \to 0} \frac{\log N_X(\vre)}{-\log \vre}
}
the lower and upper box dimension of a metric space $X$, 
where $N_X(\vre)$ is the smallest number of sets of diameter $\vre$
needed to cover $X$. Note that one always has
 \eq{boxdim bounds}{
\dim(X)\le \underline{\dim}_B(X) \le  \overline{\dim}_B(X) \,.}

We will need two classical facts about \hd:

\begin{lem}\name{marstrand} {\rm (Marstrand Slicing Theorem, see \cite[Theorem 5.8]{Falconer} or
\cite[Lemma 1.4]{KM})}
Let $X$ be a Borel subset of a manifold 
such that $\dim(X)\ge \alpha$, let $Y$ be a metric space, and let $Z$ be a subset 
of the direct product $X\times Y$ such that
$$
\dim\big(Z\cap (\{x\}\times Y)\big)\ge\beta
$$
for all $x\in X$.
Then 
$\dim(Z) \ge\alpha+\beta$.
\end{lem}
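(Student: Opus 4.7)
The plan is to prove the slicing inequality via the potential-theoretic (Frostman-measure) characterization of Hausdorff dimension, together with a disintegration / measurable-selection argument. It suffices to show $\dim(Z)\ge s+t$ for arbitrary $s<\alpha$ and $t<\beta$, and then let $s\uparrow\alpha$, $t\uparrow\beta$. Since $X$ sits in a manifold, Frostman's lemma furnishes a compact $X_0\subset X$ and a probability measure $\mu$ on $X_0$ satisfying $\mu(B(x,r))\le C_s r^s$ for all balls. Similarly, since $\dim\bigl(Z\cap(\{x\}\times Y)\bigr)\ge \beta>t$ for every $x$, each slice $Z_x\df Z\cap(\{x\}\times Y)$ supports a Frostman probability measure $\nu_x$ with $\nu_x(B(y,r))\le C(x)\,r^t$.

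The main obstacle is that the constants $C(x)$ are not a priori bounded and that the family $\{\nu_x\}$ must be chosen measurably in $x$. To handle this, I would topologize the set $\mathcal{M}_n$ of Borel probability measures $\nu$ on $Y$ (with support in a fixed large compact set, after restricting $Y$ along an exhaustion) satisfying $\nu(B(y,r))\le n r^t$ for all $y,r$; this is a compact (in the weak-$*$ topology), hence Polish subset of $\mathcal{P}(Y)$. The set of pairs $\{(x,\nu)\in X_0\times\mathcal{M}_n:\mathrm{supp}(\nu)\subset Z_x\}$ is Borel, and by Frostman's lemma the projection to $X_0$ covers $X_0$ as $n\to\infty$. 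Writing this projection as a countable increasing union, some level $X_n$ has positive $\mu$-measure; restrict $\mu$ to $X_n$ and apply the Kuratowski–Ryll-Nardzewski measurable selection theorem to obtain a Borel map $x\mapsto\nu_x$ with uniform constant $n$.

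Next I would define a measure $\nu$ on $Z$ by $\int f\,d\nu=\int\!\int f(x,y)\,d\nu_x(y)\,d\mu(x)$. Equipping $X\times Y$ with the $\max$ product metric (which alters $\dim$ by $0$), balls factor: $B_{X\times Y}((x_0,y_0),r)=B_X(x_0,r)\times B_Y(y_0,r)$, so
\[
\nu\bigl(B_{X\times Y}((x_0,y_0),r)\bigr)
\le\int_{B_X(x_0,r)}\nu_x\bigl(B_Y(y_0,r)\bigr)\,d\mu(x)
\le n\,r^t\cdot C_s r^s=C'r^{s+t}.
\]
By the mass distribution principle, $\dim(Z)\ge \dim(\mathrm{supp}\,\nu)\ge s+t$, and letting $s\uparrow\alpha$, $t\uparrow\beta$ gives the claim.

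The hard part is the measurable-selection / uniform-constant step; everything else is a routine application of Frostman, Fubini for the constructed measure, and the mass distribution principle. Modulo standard descriptive-set-theoretic facts, the argument is clean and purely measure-theoretic, and uses nothing from the dynamical setting of the rest of the paper.
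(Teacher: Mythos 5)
The paper does not prove this lemma; it only cites Falconer's book and \cite{KM}. Your plan is genuinely different from the argument in those sources, and in the generality in which the lemma is stated it has a gap. The lemma allows $Y$ to be an \emph{arbitrary} metric space and $Z$ an \emph{arbitrary} subset of $X\times Y$, yet you apply Frostman's lemma to each slice $Z_x\subset Y$. Frostman requires the set to be capacitable (Borel or analytic) and the ambient space to be complete and separable; for an arbitrary subset of an arbitrary metric space no Frostman measure need exist, and your $\sigma$-compact ``exhaustion of $Y$'' also presupposes $\sigma$-compactness. Even in the Polish/Borel setting the selection step is more delicate than you acknowledge: the fibers $\{\nu\in\mathcal{M}_n:\nu(Y\setminus Z_x)=0\}$ are $G_\delta$ but not closed, so Kuratowski--Ryll-Nardzewski does not apply directly, and if $Z$ is merely analytic the graph need not be Borel at all.

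The cited proof avoids these problems by placing a measure only on $X$, where the Borel hypothesis is given, and by using Hausdorff content rather than measures on the slices. Briefly: fix $s<\alpha$, $t<\beta$, and by Frostman choose a compact $K\subset X$ and a probability measure $\mu$ on $K$ with $\mu(B(x,r))\le Cr^s$. For any cover $\{U_i\}$ of $Z$ by open sets, let $V_i$ be the projection of $U_i$ to $X$ and set $f(x)=\sum_{i:\,x\in V_i}|U_i|^t$; then $f$ is Borel, $f(x)\ge\mathcal{H}^t_\infty(Z_x)$, and
$$\int_K f\,d\mu=\sum_i|U_i|^t\,\mu(V_i)\le C\sum_i|U_i|^t|V_i|^s\le C\sum_i|U_i|^{s+t}\,.$$
Since $\dim(Z_x)>t$ forces $\mathcal{H}^t_\infty(Z_x)>0$ for every $x$, the sets $K_n=\{x\in K:\mathcal{H}^t_\infty(Z_x)\ge 1/n\}$ cover $K$, so $\mu^*(K_n)>0$ for some $n$ by countable subadditivity of outer measure --- no measurability of $K_n$ is required. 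On $K_n$ one has $f\ge 1/n$, hence $\mu^*(K_n)\le n\int f\,d\mu\le Cn\sum_i|U_i|^{s+t}$, which gives $\mathcal{H}^{s+t}_\infty(Z)\ge\mu^*(K_n)/(Cn)>0$ and therefore $\dim(Z)\ge s+t$. This works for arbitrary $Y$ and arbitrary $Z$. Your argument does go through under the added hypotheses that $Y$ is Polish and $Z$ analytic, with a universally measurable rather than Borel selection --- which would suffice for the paper's application in \S\ref{margconj} --- but it does not prove the lemma as stated.
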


\commargin{need to check a reference.. Dima: done it.}
\begin{lem}\name{wegmann} {\rm (Wegmann Product Theorem, see \cite{weg} or \cite[Formulae 7.2 and 7.3]{Falconer})} 
 For any two metric spaces $X,Y$, 
$$
\dim(X) + \dim (Y) \le \dim \left(X\times Y \right)\le \dim(X) +
\overline{\dim}_B(Y)\,.
$$
\end{lem}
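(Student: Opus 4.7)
The plan is to prove the two inequalities separately using standard techniques from geometric measure theory. For the upper bound $\dim(X \times Y) \le \dim(X) + \overline{\dim}_B(Y)$, I would fix any $s > \dim(X)$ and $t > \overline{\dim}_B(Y)$ and build economical covers of $X \times Y$ from covers of each factor. Given small $\delta > 0$, the definition of Hausdorff dimension yields a countable cover $\{U_i\}$ of $X$ with $\diam U_i < \delta$ and $\sum_i (\diam U_i)^s$ arbitrarily small, while the definition of upper box dimension (see \equ{defboxdim}) yields, for $\delta$ small enough, a cover of $Y$ by at most $C(\diam U_i)^{-t}$ sets each of diameter at most $\diam U_i$. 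Taking all products produces a cover of $X \times Y$ whose $(s+t)$-weighted sum is bounded by $C' \sum_i (\diam U_i)^s$, which can be made arbitrarily small. Letting $s \downarrow \dim(X)$ and $t \downarrow \overline{\dim}_B(Y)$ then gives the upper bound.

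For the lower bound $\dim(X) + \dim(Y) \le \dim(X \times Y)$, I would combine Frostman's lemma with the mass distribution principle. Fix $\alpha < \dim(X)$ and $\beta < \dim(Y)$; I would first produce nontrivial finite Borel measures $\mu$ on $X$ and $\nu$ on $Y$ satisfying the Frostman-type bounds $\mu(B(x,r)) \le C r^\alpha$ and $\nu(B(y,r)) \le C r^\beta$ for all $x \in X$, $y \in Y$, and $r > 0$. Since a ball of radius $r$ in the natural product metric on $X \times Y$ is contained in a product of balls of radius $r$, the product measure $\mu \times \nu$ then satisfies $(\mu \times \nu)\big(B_{X \times Y}((x,y), r)\big) \le C^2 r^{\alpha+\beta}$. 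By the mass distribution principle applied to $\mu \times \nu$, this forces $\dim(X \times Y) \ge \alpha + \beta$, and letting $\alpha \uparrow \dim(X)$ and $\beta \uparrow \dim(Y)$ concludes.

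The main obstacle will be in executing the Frostman step for arbitrary metric spaces, since the classical Frostman lemma is most cleanly stated for compact or analytic subsets of Euclidean space. I would address this by first passing to compact subsets of $X$ and $Y$ of Hausdorff dimension arbitrarily close to $\dim(X)$ and $\dim(Y)$ respectively (which is possible under the mild regularity implicit in the intended applications, and is the standard reduction in Wegmann's original argument), and then invoking Frostman in that setting. The upper bound is genuinely elementary; the whole subtlety of the statement lies in the asymmetry between $\dim$ and $\overline{\dim}_B$ on the right-hand side, which reflects the fact that covers used to realize Hausdorff dimension need not have the uniform efficiency needed to control the product dimension, whereas box covers do.
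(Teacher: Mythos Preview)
The paper does not supply a proof of this lemma at all; it is simply quoted with references to Wegmann's original paper and to Falconer's textbook, and then used as a black box in \S\ref{margconj}. So there is no ``paper's own proof'' to compare against.

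Your sketch is the standard textbook argument (essentially the one in Falconer), and the upper bound is carried out correctly. For the lower bound you have correctly identified the only real issue: Frostman's lemma and the reduction to compact subsets of nearly full dimension are not available for \emph{arbitrary} metric spaces without additional hypotheses (separability, completeness, or analyticity of the sets involved). Wegmann's original argument avoids Frostman and works directly with weighted Hausdorff measures; the modern route via Frostman in general metric spaces goes through Howroyd's theorem. In any case, for the applications in this paper (subsets of Lie groups and homogeneous spaces, hence analytic subsets of separable complete metric spaces) your approach goes through without difficulty, so the caveat you raised is the right one and no further repair is needed here.
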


\subsection{Two-sided orbits} \name{2sided}
First let us show that Corollary \ref{cor: D}
implies that there is a thick set of points with $F$-orbits
bounded and staying away from a countable set.

\begin{prop}\name{prop: twosided}  Let $G$, $\Gamma$ and $F$ 
be as in Theorem \ref{thm: C}, and let $Z$ be
a countable subset of $\ggm$. Then  $E(F,Z)  \cap E(F,\infty)$ is thick.
\end{prop}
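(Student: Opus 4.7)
The plan is to deduce Proposition~\ref{prop: twosided} from Corollary~\ref{cor: D} via local product structure and Marstrand slicing (Lemma~\ref{marstrand}). First, $\overline{Fx}=\overline{F^+x}\cup\overline{F^-x}$ gives
\[
E(F,Z)\cap E(F,\infty)\;=\;E^+\cap E^-,\qquad E^\pm\df E(F^\pm,Z)\cap E(F^\pm,\infty),
\]
and Corollary~\ref{cor: D}, applied both to $F$ and to $F^{-1}$ (also non-quasiunipotent, with expanding horospherical $H^-$), gives thickness of $\{h\in H^+:hy\in E^+\}$ and $\{h\in H^-:hy\in E^-\}$ for every $y\in\ggm$.

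Fix $x_0\in\ggm$ and a small neighborhood $V$; use $\goth g=\goth h^-\oplus\goth h^0\oplus\goth h^+$ to parametrize $V$ locally as $y=v^-v^+v^0\cdot x_0$ with $v^\pm\in V^\pm\subset H^\pm$ and $v^0\in V^0\subset H^0$. Fix $v^0$, set $\tilde x=v^0x_0$, and let $B\df\{v^-\in V^-:v^-\tilde x\in E^-\}$, thick in $V^-$. I claim that for every $v^-\in B$ the slice $C_{v^-}\df\{v^+\in V^+:y=v^-v^+\tilde x\in E^+\cap E^-\}$ is thick in $V^+$. Writing $g_ty=(g_tv^-g_{-t})\cdot g_t(v^+\tilde x)$ and using $g_tv^-g_{-t}\to e$, forward boundedness and the $t\to\infty$ limit-$Z$-avoidance of $y$ reduce to the same properties for $v^+\tilde x$, which hold on a thick set of $v^+$ by Corollary~\ref{cor: D} at base $\tilde x$; symmetrically, using $g_{-t}v^+g_t\to e$, the backward properties of $y$ follow from $v^-\in B$. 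What remains are the finite-time conditions $y\notin F^\mp Z$: for each $(z,\gamma)\in Z\times\Gamma$ the equation $v^-v^+\tilde x_0=g_{\mp t}\tilde z\gamma$ admits at most one small solution $(v^+,t)$, because two distinct solutions would force $v^-h^+v^{-,-1}=g_s$ for some $h^+\in H^+$ and $s\ne 0$, equivalently $h^+=v^{-,-1}\Phi_{-s}(v^-)\cdot g_s\in H^-H^0$, contradicting $H^-H^0\cap H^+=\{e\}$ (uniqueness of the local product decomposition, together with $F\subset H^0$, which holds because $\Ad(g_1)$ fixes any generator of $\goth f$). The countable exclusion from $Z\times\Gamma$ preserves thickness.

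Two applications of Lemma~\ref{marstrand} finish the argument: slicing over $v^-\in B$ gives $\dim\{(v^-,v^+):v^-v^+\tilde x\in E^+\cap E^-\}\ge\dim H^-+\dim H^+$, and a further slicing over $v^0\in V^0$ yields
\[
\dim(V\cap E^+\cap E^-)\;\ge\;\dim H^0+\dim H^-+\dim H^+\;=\;\dim G,
\]
so $E^+\cap E^-$ is thick at $x_0$, and hence everywhere. The main obstacle is the finite-time $Z$-avoidance step: although the $F$-orbits through $Z$ form a one-dimensional locus, one must certify that each small $V^+$-slice meets it only countably often; the transversality $F\subset H^0$, $H^0\cap H^+=\{e\}$, together with the local product structure of $G$, supplies this.
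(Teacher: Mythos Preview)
Your overall strategy---local product structure, Corollary~\ref{cor: D} on each horospherical factor, stable-manifold asymptotics, and Marstrand slicing---matches the paper's. However, the step you label ``symmetrically'' does not go through as written. You parametrize $y=v^-v^+\tilde x$ and claim that $v^-\tilde x\in E^-$ together with $g_{-t}v^+g_t\to e$ forces $y\in E^-$. But the element relating $y$ to $v^-\tilde x$ on the \emph{left} is $w=v^-v^+(v^-)^{-1}$, so that
\[
g_{-t}y=(g_{-t}wg_t)\,g_{-t}(v^-\tilde x),
\]
and what you actually need is $g_{-t}wg_t\to e$, i.e.\ $w\in H^+$. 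Since $H^-$ does not normalize $H^+$ (already in $\SL_2(\R)$ conjugating an upper unipotent by a lower unipotent produces a nonzero lower-left entry), $w\notin H^+$ and $g_{-t}wg_t$ blows up. Thus from $v^-\tilde x\in E^-$ you cannot deduce anything about the backward orbit of $v^-v^+\tilde x$; the ``symmetry'' is broken by the order of the factors.

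The fix is simply to reverse the order in which you exhaust the two horospherical directions, and this is precisely what the paper does. With your parametrization $y=v^-v^+v^0x_0$: first fix $v^0$ and use Corollary~\ref{cor: D} at the base point $\tilde x=v^0x_0$ to get a thick set of $v^+$ with $v^+\tilde x\in E^+$; then, for each such $v^+$, apply Corollary~\ref{cor: D} (for $F^-$ and $H^-$) at the base point $v^+\tilde x$ to get a thick set of $v^-$ with $v^-v^+\tilde x\in E^-$. Now only the \emph{forward} asymptotic argument is needed, and it works because the leftmost factor is $v^-$: $g_t(v^-v^+\tilde x)=(g_tv^-g_{-t})\,g_t(v^+\tilde x)$ with $g_tv^-g_{-t}\to e$, so $v^-v^+\tilde x\in E^+$ except for the countable finite-time exclusions, which you have handled correctly (your transversality argument for $F\subset H^0$, $H^0H^-\cap H^+=\{e\}$ is in fact more explicit than the paper's one-line claim).
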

\commargin{Barak: rewrote this proof. Dima: OK, but again please check my corrections here and elsewhere.}
\begin{proof}
The argument follows the lines of  \cite[\S 1]{KM}. Recall that we denoted by $H^+$ the \ehs\ with respect to $g_1$.
Similarly one can consider the group $H^-$, which is
expanding horospherical  with respect to $g_{-1}$,
and another group $H^0$ such that the multiplication map
\eq{mult}
{(h^-,h^+,h^0)\mapsto
h^-h^+h^0,
\ \ H^-\times H^+ \times H^0\to G \,,}
is locally (in the neighborhood of identity) very close to an isometry.
More precisely, given a nonempty open $W\subset\ggm$, choose a point $x\in W$ and take
$U\subset G$ of the form $U^- U^+ U^0$, where $U^-$, $U^+$  and $U^0$ are
neighborhoods of identity in $H^-$, $H^+$ and $H^0$ respectively, such that the
map 
\equ{mult} 
is bi-Lipschitz on $U$, the orbit map $\pi_x:U\to\ggm$, $g\mapsto gx$, is injective
and its image is contained in $W$. Then it is enough to show that
\eq{enough}{
\dim\left(\big\{g\in U : gx\in E(F,Z)  \cap E(F,\infty) \big\}\right) = \dim(G)\,.
}
Denote
$$C^{\pm} \df \{g\in U : gx\in E(F^{\pm},Z)  \cap E(F^{\pm},\infty)\}\,.$$
First we fix $h^0 \in U^0$. From Corollary \ref{cor: D} and Theorem
\ref{thm: countable general}(b) it follows that 
\eq{follows1}{\dim(C^+\cap U^+h^0) = \dim (H^+) \,. }
We now claim that for any $g\in C^+$, the set $U^-g \smallsetminus C^+$ is
at most countable. Indeed, because orbits of $H^-$ are stable manifolds
with respect to the action of $F^+$, for any $h^-\in U^-$ the 
trajectories $F^+gx$ and $F^+h^-gx$ have the same sets of accumulation points.
Since  $g\in C^+$, we conclude that either
 $h^-gx\in E(F^+,Z)  \cap E(F^+,\infty)$, or $g_th^-gx \in Z$ for some
$t\ge 0$, which can only happen for countably many $h^-$.

Applying  Corollary \ref{cor: D} and Theorem \ref{thm: countable general}(b)
 (with $F^-$ in place of $F^+$ and $H^-$ in place
of $H^+$), one concludes that for all $h\in C^+\cap U^+ h^0,$
$$\dim( \left\{h^- \in U^-: h^-h \in C^-
\right \} ) = \dim (H^-)\,.$$
Using the claim, and since a countable set of points does not affect
Hausdorff dimension, we find  that for all $h\in C^+\cap U^+ h^0,$
\eq{follows2}{\begin{split}
& \dim\left( \left\{h^- \in U^- : h^-h \in C^+ \cap C^-\right\}\right) \\
= & \dim\left( \left \{h^- \in U^-: h^-hx \in E(F,Z) \cap E(F,\infty)
\right \} \right)= \dim ( H^-) \,. 
\end{split}
}
Using Lemma \ref{marstrand} and \equ{follows1},\equ{follows2}
we find that 
$$\dim \, \left( (U^- \times U^+)h^0 \cap C^+ \cap C^- \right) =
\dim (H^+) + \dim( H^-)\,.$$
Now using Lemma \ref{marstrand} again (with $X = U^0$) we obtain
\equ{enough}. 
\end{proof}

\subsection{Entropy argument} \name{entropy}
In this subsection we will show that the requirement $\dim (\cl{Fx} ) <
\dim (G)$ is fulfilled automatically whenever $Fx$ is a bounded and nondense orbit
on $\ggm$, where $G$, $\Gamma$ and $F$ are as in Theorem \ref{thm: C}.
Combined with the result of the previous section, this will establish

\begin{prop}\name{prop: smalldim irr}  Let $G$, $\Gamma$ and $F$ 
be as in Theorem \ref{thm: C}, and let $Z$ be
a countable subset of $\ggm$. Then the set \equ{efzinfty} is thick.
\end{prop}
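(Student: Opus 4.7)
The plan is to deduce the proposition from Proposition \ref{prop: twosided} by showing that the dimension condition $\dim(\cl{Fx}) < \dim(G)$ is automatic on $E(F,Z) \cap E(F,\infty)$. First I would argue that we may assume $Z$ is nonempty without loss of generality: if $Z = \varnothing$, simply replace it with any singleton $\{z_0\} \subset \ggm$, which only shrinks the set $E(F,Z) \cap E(F,\infty)$, so thickness in the enlarged case implies thickness in the original. Having fixed some $z_0 \in Z$, observe that every $x \in E(F,Z) \cap E(F,\infty)$ satisfies: the orbit closure $K \df \cl{Fx}$ is compact (by $x \in E(F,\infty)$) and a proper subset of $\ggm$ (since $z_0 \notin K$). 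Thus it would suffice to prove the claim
\[
(\star) \qquad K \subsetneq \ggm \text{ compact and $F$-invariant} \ \Longrightarrow \ \dim(K) < \dim(G),
\]
after which $E(F,Z) \cap E(F,\infty) \subseteq $ \equ{efzinfty} and Proposition \ref{prop: twosided} yields thickness.

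The proof of $(\star)$ is the entropy argument attributed to Einsiedler and Lindenstrauss. Let $h_{\max}$ denote the topological entropy of $g_1$ acting on $\ggm$, equal to the sum of logarithms of absolute values of eigenvalues of $\Ad(g_1)$ strictly greater than one, counted with multiplicity. The key inputs are: (i) the Margulis--Ruelle inequality, bounding the metric entropy $h_\mu(g_1)$ of every $F$-invariant probability measure $\mu$ on $\ggm$ by $h_{\max}$; (ii) a rigidity theorem asserting that equality $h_\mu(g_1) = h_{\max}$ forces $\mu$ to be the Haar measure on $\ggm$, which relies crucially on the semisimplicity of $G$ and irreducibility of $\Gamma$; and (iii) a Ledrappier--Young-type formula translating a strict entropy deficiency into a strict partial dimension deficiency along the unstable foliation of $g_1$. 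By the variational principle combined with (i) and (ii), $h_{\mathrm{top}}(g_1|_K) < h_{\max}$, since no Haar measure is supported on the proper subset $K$.

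The hard part will be the last step, converting this strict entropy gap into a strict Hausdorff dimension gap $\dim(K) < \dim(G)$. This requires controlling partial dimensions of $K$ along all Lyapunov subspaces of $\Ad(g_1)$ in a uniform manner, not just along the unstable directions where the entropy ``lives''. One anticipates the need to pass from the topological entropy bound to an assertion about the measure of maximal entropy on $K$ (an ergodic invariant measure $\mu$ achieving $h_\mu(g_1) = h_{\mathrm{top}}(g_1|_K)$), then to invoke (iii) so that a dimensional defect of $\mu$ along some unstable direction manifests as a dimensional defect of $K$ itself via Marstrand slicing on $F$-invariant sets. This conversion is where the fine entropy machinery acknowledged in the introduction plays its decisive role, and is where I would expect the main technical obstacle to lie.
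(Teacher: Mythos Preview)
Your overall strategy matches the paper's: reduce to Proposition \ref{prop: twosided} by establishing the claim $(\star)$ via the Einsiedler--Lindenstrauss entropy argument. The reduction step and the use of entropy rigidity (your (ii)) to conclude that maximal entropy forces Haar measure are essentially what the paper does; the paper phrases (ii) as invariance under $H^+$ (via \cite[Thm.~7.9]{EL}), hence under $H^-$, hence under the Auslander subgroup, and then invokes Proposition \ref{prop: since Gamma irreducible} for unique ergodicity.

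The genuine divergence is in the ``hard part''. You propose to go from the strict inequality $h_{\mathrm{top}}(g_1|_K) < h_{\max}$ to $\dim(K) < \dim(G)$ via a Ledrappier--Young-type formula plus Marstrand slicing, passing through partial dimensions of a measure of maximal entropy on $K$. The paper avoids this entirely: it proves the contrapositive (Lemma \ref{lem: relating dim and entropy}) by a direct and elementary box-dimension computation. Assuming $h_{\mathrm{top}}(g_1|_K) \le h_{\max} - \delta_1$, one covers $K$ by $(n,\vre)$-Bowen balls, bounds each Bowen ball by a box in $\goth{g}$ via the exponential map, subdivides the boxes into cubes of the smallest sidelength $e^{n(\delta_3 - \alpha_{\max})}\eta$, and counts to get an explicit upper bound on $\underline{\dim}_B(K)$ strictly below $\dim(G)$. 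No Ledrappier--Young, no measure-of-maximal-entropy argument, no slicing. This is considerably more elementary than what you sketch, and your final step (transferring a dimensional defect of a measure $\mu$ to a dimensional defect of the set $K$ via Marstrand slicing) is the vaguest part of your outline and the place where your route would require the most additional work. Note also that the paper handles the issue you flag about achieving the supremum in the variational principle by taking a weak-$*$ limit of near-maximizing measures and invoking upper semicontinuity of entropy (\cite[Prop.~3.15]{EL}) on the compact set $K$.
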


The argument contained in this subsection was explained to us by
Manfred Einsiedler and Elon Lindenstrauss. It relates the Hausdorff
dimension to the topological and metrical entropy of the action of
$F^+$. For a detailed recent exposition we refer the reader to the
survey \cite{EL}.

\ignore{

bbbbbbbbbbbbbbbbbb

\subsection{Compactness criteria, the general case}
\combarak{This entire section should be written with more economical notation.}
\name{general}
In this section $G$ is  a connected Lie group and $\Gamma$ be a
lattice in $G$, as in   Corollary \ref{cor: F}. Let $M$ denote the largest proper normal subgroup of $G$ for
which the image of $\Gamma$ under $p: G\to G/M$ is discrete and the fiber $M/(M
\cap \Gamma)$ is compact, and write $G_1 = p(G), \Gamma_1 =
p(\Gamma)$. Then by \cite{Raghunathan}, $G_1$ is semisimple without
compact factors, $\Gamma_1$ is a lattice in $G_1$, and
the quotient map $\bar{p}: \ggm \to G_1/\Gamma_2$ is proper and
intertwines the action of $\{g_t\}$ on $\ggm$ with that of
$\{p(g_t)\}$ on $G_1/\Gamma_1$. This implies

\begin{prop}
\name{prop: reduction to semisimple}
Let $\{g_t : t  \in \R\} $ be a one-parameter subgroup of $G$ and let
$x \in \ggm$. Then $\{g_t x : t > 0\} \subset \ggm$ is bounded if and
only if $\{p(g_t) \bar{p}(x) : t >0 \} \subset G_1/\Gamma_1$
is bounded. 
\end{prop}

The Margulis arithmeticity theorem and \cite{Raghunathan}
\combarak{give precise references} imply that, possibly after replacing
$\Gamma_1$ with a commensurable lattice, we have $G_1 = G_2
\cdot H_1 \cdots H_k$ as an almost direct product of Lie groups, $G_2$
is a $\Q$-algebraic semisimple group, $H_i, \, i=1, \ldots, k$ are
semisimple and of real rank 1, and 
$$\Gamma_1 = \Gamma_2 \times \Lambda_1 
\times \cdots \times \Lambda_k, \ \ \ \mathrm{where} \ \Gamma_2 = G_2
\cap \Gamma_1 \ \mathrm{and} \  \Lambda_i = \Gamma_1 \cap H_i$$
is a lattice for $i=1, \ldots, k,$ with $\Gamma_1$ commensurable with
$G_1(\Z)$.  
Let $p_2: G_1 \to G_2$, $q_i: g_1 \to H_i$ be the projection maps and
let $\bar{p_2}: G_1/\Gamma_1 \to G_2/\Gamma_2, \bar{q_i}: G_1/\Gamma_1
\to H_i/\Lambda_i$ be the corresponding maps. We have shown:
\begin{prop}[Compactness criterion, general case]
\name{prop: cc, general}
\begin{itemize}
\item[(i)]
For $X \subset G$, $\pi(X) \subset \ggm$  is precompact if and only if
$\bar{p}_2(X)$ and $\bar{q}_i(X)$ are precompact for $i=1, \ldots,
k$. 
\item[(ii)]
The map $\bar{p}_2$ (resp. $\bar{q}_i$) intertwines the action of
$\{g_t\}$ on $\ggm$ with that of $\{p_2(g_t)\}$ (resp. $\{q_i(g_t)\}$)
on $G_2/\Gamma_2$ (resp. $H_i/\Lambda_i$). 
\item[(iii)]
Let $\Delta = \Gamma_2 \cap G_2(\Z)$, and let $P_1: G_2/\Delta \to
G_2/\Gamma_2, \ P_2: G_2/\Delta \to
G_2/G(\Z)$ be the natural maps. Also let $\bar{F} = \{p_2(g_t): t \geq
0\}$. For any $x \in G_2/\Delta$ we have $\bar{F}P_1(x)$ is bounded in
$G_2/\Gamma_2$ if and only if $\bar{F}P_2(x)$ is bounded in
$G_2/G_2(\Z)$. 
\item[(iv)] Let $Z \subset \ggm \cup \{\infty\}$, and let $x \in
\ggm$. If 
$$\bar{p_2} \circ \bar{p} (x) \in E\left(p_2 \circ p(F), \bar{p_2}
\circ \bar{p}(Z) \right)$$ 
and  
$$\bar{q_i} \circ \bar{p} (x) \in E\left(q_i \circ p(F), \bar{q_i}
\circ \bar{p}(Z) \right)$$
 for each $i$ then $x \in E(F, Z)$. 
\item[(v)]
Let $Z \subset G_2 \cup \{\infty\}$ such that $P_i^{-1} \circ P_i(Z) =
Z$ for $i=1,2$ (we are using the convention $P_i(\infty) =
\infty$). Let $x \in G_2/\Gamma_2$. Then $P_1(x) \in E\left(\bar{F}, P_1(Z)
\right)$ if and only if $P_2(x) \in E\left(\bar{F}, P_2(Z)
\right)$. 
\end{itemize}
\end{prop}

}
\begin{prop}
\name{entr}
Suppose $G$ is a semisimple Lie group with no compact factors,
$\Gamma$ is an irreducible lattice, and $x \in \ggm$ such that
$F^+x$ is bounded but not dense. Then necessarily $\dim( \cl{F^+x}) < \dim (G)$. 
\end{prop}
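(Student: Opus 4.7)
Plan: I would argue by contradiction using an entropy--dimension argument in the spirit of Einsiedler--Lindenstrauss. Suppose $\dim(K) = \dim(G)$, where $K := \overline{F^+x}$; since $F^+x$ is bounded, $K$ is compact, and by hypothesis $K \subsetneq \ggm$. Applying Krylov--Bogolyubov to any weak-$*$ limit of the orbit averages $\frac{1}{T}\int_0^T \delta_{g_sx}\,ds$, one obtains an $F$-invariant (in particular $g_1$-invariant) probability measure supported on $K$.

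The technical heart is the Ledrappier--Young dimension formula applied to the smooth $g_1$-action on $\ggm$: for any ergodic $g_1$-invariant probability measure $\mu$,
\[
\dim(\mu) \;=\; \sum_i \gamma_i(\mu), \qquad h_\mu(g_1) \;=\; \sum_{\lambda_i>0} \lambda_i\, \gamma_i(\mu),
\]
where $0\le \gamma_i(\mu) \le \dim E_{\lambda_i}$ are the partial dimensions of $\mu$ along the Lyapunov subspaces $E_{\lambda_i}$ of $\Ad(g_1)$, and the Lyapunov exponents $\lambda_i$ are the real parts of the log-eigenvalues of $\Ad(g_1)$. A Frostman-type argument, combined with time-averaging to restore invariance, produces a $g_1$-invariant $\mu$ supported on $K$ with $\dim(\mu)$ equal to $\dim(G) = \sum_i \dim E_{\lambda_i}$. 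The partial dimensions must then saturate, $\gamma_i(\mu) = \dim E_{\lambda_i}$ for every $i$, whence
\[
h_\mu(g_1) \;=\; \sum_{\lambda_i>0} \lambda_i \dim E_{\lambda_i} \;=\; h_{\mathrm{top}}(g_1),
\]
the maximum possible entropy for $g_1$ acting on $\ggm$.

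To conclude, I would invoke the uniqueness of the measure of maximal entropy for translations on $\ggm$ in the semisimple setting (classical, going back to Margulis in the rank-one case): the normalized Haar measure is the unique $g_1$-invariant probability measure with entropy $h_{\mathrm{top}}(g_1)$. Hence $\mu$ is Haar, whose support is all of $\ggm$, contradicting $\mathrm{supp}(\mu) \subseteq K \subsetneq \ggm$. The main obstacle is producing an invariant measure of full Hausdorff dimension on $K$ (going from a Frostman measure to an \emph{invariant} one without losing dimension) and rigorously setting up Ledrappier--Young in this homogeneous-space context; these are precisely the pieces attributed to Einsiedler and Lindenstrauss in the acknowledgements, and for which \cite{EL} is cited.
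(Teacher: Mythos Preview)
Your overall strategy---contradiction via an entropy argument forcing an invariant measure on $K$ to be Haar---matches the paper's. But the specific route you outline has a gap you yourself flag, and the paper avoids it rather than filling it.

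The obstacle you name (passing from a Frostman measure of full dimension to an \emph{invariant} one without losing dimension) is genuine: Hausdorff dimension is not lower semicontinuous under weak-$*$ limits, and averaging can destroy it. The paper sidesteps this entirely. Rather than producing an invariant measure of full dimension and then applying Ledrappier--Young, it proves directly (Lemma~\ref{lem: relating dim and entropy}) that $\dim(K)=\dim(G)$ forces the \emph{topological} entropy of $g_1|_K$ to equal $h_{\max}$, via an elementary Bowen-ball covering estimate: if $h_{\mathrm{top}}(g_1|_K)<h_{\max}$, one covers $K$ by too few dynamical balls, and translating this into ordinary balls contradicts $\dim(K)=\dim(G)$. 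Then the variational principle yields $g_1$-invariant measures $\mu_i$ on $K$ with $h(\mu_i)\to h_{\max}$; any weak-$*$ limit $\nu$ stays on the compact $K$, and upper semicontinuity of entropy (\cite[Prop.~3.15]{EL}) gives $h(\nu)=h_{\max}$. Ledrappier--Young is never invoked.

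A second point: your final step cites uniqueness of the measure of maximal entropy as a black box (``classical, going back to Margulis''). For partially hyperbolic $g_1$ on $\ggm$ this is not off-the-shelf and in fact uses the irreducibility of $\Gamma$. The paper argues instead: from $h(\nu)=h_{\max}$, \cite[Thm.~7.9]{EL} gives $H^+$-invariance of $\nu$; since entropy of a map equals that of its inverse, the same reasoning yields $H^-$-invariance; hence $\nu$ is invariant under the (normal) Auslander subgroup generated by $H^\pm$, and Proposition~\ref{prop: since Gamma irreducible} then forces $\nu$ to be Haar.
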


We will need the following
statement, whose proof is postponed to the end of this section:
\begin{lem}\name{lem: relating dim and entropy}
Suppose $g_1$ is a non-\qu \ element of $G$ 
and $K \subset \ggm$ is compact and $g_1$-invariant, with $\dim (K) = \dim
(G)$. Then the topological entropy $h_{\mathrm{top}}(g_1)$ \commargin{Dima: added ``on $K$"}
 of the $g_1$-action on $K$
is equal to 
the measure theoretic entropy of the action of $g_1$ on $\ggm$. 

\end{lem}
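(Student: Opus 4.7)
The plan is to establish the two inequalities $h_{\mathrm{top}}(g_1|_K) \leq h_{\mathrm{meas}}(g_1,G/\Gamma)$ and $h_{\mathrm{top}}(g_1|_K) \geq h_{\mathrm{meas}}(g_1,G/\Gamma)$, where the measure-theoretic entropy refers to Haar probability measure, which is the natural $g_1$-invariant measure in this setting. By a classical computation for semisimple homogeneous flows, $h_{\mathrm{Haar}}(g_1) = \sum_i \lambda_i^+$, the sum of the positive Lyapunov exponents of $\mathrm{Ad}(g_1)$ counted with multiplicity. The upper bound is routine: the Ruelle--Margulis inequality gives $h_\mu(g_1) \leq \sum_i \lambda_i^+$ for every $g_1$-invariant probability measure $\mu$ on $K$, and the variational principle then yields $h_{\mathrm{top}}(g_1|_K) \leq \sum_i \lambda_i^+$. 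Geometrically, this reflects the fact that a regular $\epsilon$-ball in $G/\Gamma$ can be covered by at most $\asymp e^{n \sum_i \lambda_i^+}$ Bowen balls $B_n(x,\epsilon)$, since those balls have size $\epsilon e^{-n\lambda_i^+}$ in each eigendirection of $\mathrm{Ad}(g_1)$ on $\goth{h}^+$ and size $\asymp \epsilon$ in the neutral and stable directions.

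The lower bound is where the hypothesis $\dim(K) = \dim(G)$ enters. First I would use the local product structure $G = H^-\cdot H^0 \cdot H^+$ together with Marstrand's slicing theorem (Lemma \ref{marstrand}), applied successively to the three factors, to produce a point $x \in K$ and a small neighborhood $U$ of the identity in $H^+$ such that $\dim\bigl(K \cap U x\bigr) = \dim(H^+)$. By Frostman's lemma, one obtains a probability measure $\nu$ supported on this unstable-leaf slice which is $(\dim(H^+)-\delta)$-Frostman for arbitrarily small $\delta>0$. The key geometric observation is that inside the unstable leaf $H^+ x$, a Bowen ball $B_n(x,\epsilon)$ is an anisotropic box of side $\epsilon e^{-n\lambda_i^+}$ in the $\lambda_i^+$-eigendirection; partitioning into such boxes one obtains $\asymp \epsilon^{-\dim(H^+)} e^{n\sum_i \lambda_i^+}$ pieces, each of small $\nu$-mass. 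This produces that many $(n,\epsilon)$-separated points inside $K$, which after letting $n\to\infty$ and $\epsilon\to 0$ gives the desired lower bound $h_{\mathrm{top}}(g_1|_K) \geq \sum_i \lambda_i^+$.

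The main obstacle is precisely the passage from the Hausdorff-dimension-type information (isotropic, via Frostman) to the anisotropic box-counting needed by the Bowen metric, since different unstable eigendirections expand at different rates $\lambda_i^+$ and a single Frostman ball estimate is too coarse. The cleanest way around this, which I expect to be the route followed after the input of Einsiedler and Lindenstrauss, is to bypass box-counting entirely and instead build a $g_1$-invariant measure $\mu$ on $K$ by averaging the conditional Frostman measure along $g_1$-orbits and passing to a weak-$*$ limit; the Ledrappier--Young entropy formula then expresses $h_\mu(g_1)$ as a combination of partial dimensions of $\mu$ along the flag of unstable subfoliations weighted by the $\lambda_i^+$, and the slicing argument shows that each of these partial dimensions is as large as possible. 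This forces $h_\mu(g_1) = \sum_i \lambda_i^+$, whence the variational principle gives the lower bound. Note that this simultaneously produces the invariant measure of maximal entropy on $K$ that Proposition \ref{entr} needs in order to invoke measure rigidity and conclude $K = G/\Gamma$.
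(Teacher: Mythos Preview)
Your upper bound is fine; the content is the lower bound, and there the paper does something quite different and considerably more elementary than what you propose.

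The paper argues by contradiction via a direct covering estimate. Suppose $h_{\mathrm{top}}(g_1|_K) \le h_{\max} - \delta$. Then for some $\vre>0$ and a sequence $n_k\to\infty$, $K$ is covered by at most $e^{n_k(h_{\max}-\delta')}$ Bowen $(n_k,\vre)$-balls. Each Bowen ball is contained (via $\exp$ and a bi-Lipschitz bound) in an anisotropic box in $\goth g$ of side $\asymp e^{-n_k\alpha(a_1)}$ in the $\goth g_\alpha$-direction for $\alpha\in\Phi^+$ and $\asymp 1$ in the nonpositive directions. Now cover each such box by isotropic cubes of side equal to the \emph{smallest} side, namely $e^{-n_k\alpha_{\max}}$; a straightforward count shows the total number of cubes is at most $e^{n_k(\alpha_{\max}\dim G - \delta' + o(1))}$. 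Plugging this into the definition of lower box dimension gives $\underline{\dim}_B(K) \le \dim G - \delta'/\alpha_{\max} < \dim G$, contradicting $\dim K = \dim G$. No Frostman, no slicing, no Ledrappier--Young; the anisotropy problem you identified is handled in the easy direction (over-covering an anisotropic box by small isotropic cubes) rather than the hard one (packing anisotropic boxes into a set of given Hausdorff dimension).

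Your proposed route has a genuine gap at the step ``the slicing argument shows that each of these partial dimensions is as large as possible.'' Averaging a Frostman measure along the orbit and passing to a weak-$*$ limit does not in general preserve any pointwise-dimension or transverse-dimension information: weak-$*$ limits can collapse dimension, and the Ledrappier--Young partial dimensions of the limit measure $\mu$ are properties of $\mu$, not of the approximants. Nothing you have written controls them. (Even the preliminary slicing step needs care: the lemma you cite from the paper is the \emph{lower} bound $\dim(X\times Y)\ge \dim X + \dim Y$, whereas you need the opposite direction, which requires the genuine Marstrand slicing theorem.) The paper avoids all of this by running the contrapositive as a covering argument, which is both shorter and avoids any measure-theoretic machinery beyond the definition of topological entropy.
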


\begin{proof}[Proof of Proposition \ref{entr} assuming Lemma \ref{lem:
relating dim and entropy}] Suppose by contradiction that 
$\dim (K) = \dim (G)$, where $K = \cl{F^+x}$ is compact. By Lemma \ref{lem: relating dim and
entropy} we find that the topological entropy $h_{\mathrm{top}} (g_1) $
is equal to $h_{\max}$, the measure theoretic entropy of the action of
$g_1$ on $\ggm$. By the variational
principle \cite[Prop. 3.21]{EL} there are $g_1$-invariant measures $\mu_i$
on $K$ with entropy $h(\mu_i)$ tending to $h_{\max}$. Since $K$ is
compact, we can take a weak-$*$ limit $\nu$ of these measures, then by
\cite[Prop. 3.15]{EL} we have $h(\nu) = h_{\max}$, so by \cite[Thm. 7.9]{EL}, 
$\nu$ is invariant under $H^+$. Since the stabilizer of a measure is a
group, $\nu$ is also invariant under $g_1^{-1}$, and since the entropy
of a map is the same as that of its inverse, we see that $\nu$ is also
invariant under the horospherical subgroup $H^-$ of $g_1^{-1}$. Let
$G_0$ be the subgroup of 
$G$ generated by $H^+$ and $H^-$; it is sometimes called the {\sl
Auslander subgroup\/} corresponding to $g_1$, see
\cite[\S2.1.a]{handbook}. It is normal in $G$, so by Proposition 
\ref{prop: since Gamma irreducible} it acts uniquely ergodically on
$\ggm$. Therefore $\nu$ is the Haar measure which is of full support. But $\nu$ is
supported on $K$, so $K = \ggm$, contrary to assumption. 
\end{proof}

\begin{proof}[Proof of  Lemma \ref{lem:
relating dim and entropy}]
We will use the following formula for $h_{\max}$ (see e.g. \cite{EL}):
\eq{eq: formula for entropy}{
h_{\max} = \log \big|\det \Ad(g_1)|_{\goth{h}^+} \big| =
\sum_{\alpha \in \Phi^+} \dim (\goth{g}_{\alpha} )\,  \alpha(a_1),}
where $g_1=k_1u_1a_1$ is the 
decomposition of $g_1$ into its compact,
semisimple and unipotent parts as in Proposition \ref{prop: Jordan},
$\goth{h}^+$ is the Lie algebra of the expanding horospherical
subgroup $H^+$, $\Phi$ is the set of roots, and 
$\Phi^+ = \{\alpha \in \Phi : \alpha(a_1)>0\}. $
Recall that  $x_1, x_2 $ are said to be {\sl $(n, \vre)$-separated\/}
if there is $j \in \{0, \ldots, n-1\}$ such that $d(g_1^jx_1, g_1^jx_2)
\geq \vre$. Then one has 
$$h_{\mathrm{top}}(g_1) = \sup_{\vre>0} \lim_{n \to \infty}\frac{\log
N_K(n, \vre)}{n}\,,$$ 
where $N_K(n,\vre)$ is the maximal cardinality of a set of
mutually $(n, \vre)$-separated points of $K$. 
Suppose by contradiction that $h_{\mathrm{top}}(g_1) = h_{\max} -
\delta_1$ for some $\delta_1>0$.  Then there is $\delta_2>0$, and for
any $\vre$, there exists a sequence $n_k \to \infty$ so that for all $k$, there
are at most 
$$M_k\df e^{n_k( h_{\max} - \delta_2)}$$ 
mutually $(n_k, \vre)$-separated points of $K$. This means that $K$ is
covered by $M_k$ sets of the form $B(x, n_k, \vre)$, where 
$$
B(x, n, \vre) \df \bigcap_{j=0}^{n-1} g_1^{-j}\left(B(g_1^jx, \vre)
\right). 
$$
Applying an argument similar to that of \cite[Prop. 8.3]{EKL}, using
the fact that $K$ is compact and the exponential map is  
bi-Lipschitz in a sufficiently small neighborhood of the identity, we
find: for any small enough $\vre$ there is $\vre'>0$ so that if we
represent $y = \exp(\vv)x$, 
with $\vv \in \goth{g}, \, \|\vv\| \leq \vre'$ and $x,y\in K$, \commargin{Dima: it used to be $g_1^ny, g_1^nx \in
K$ for all $n$, and $K$ was not defined at all; I am not sure what you meant, please check!}
then for the first index $j$ for which 
$\|\Ad^j(g_1)\vv\| \geq \vre'$, we will have $d(g_1^jx, g_1^jy)
\geq \vre$. This implies that 
any set $B(x, n_k, \vre)$ as above is covered by a set 
$\exp\big(C(n_k, \vre')\big)x$, where 
$$C(n, \eta) \df \bigcap_{j=0}^{n-1} \{\vv \in \goth{g}: \|\Ad^j(g_1)\vv \|
\leq \eta\}. 
$$
We may simplify calculations by assuming that the norm in the above
formula is the sup norm with respect to a basis 
$$\{\vv^{(\alpha)}_i :
\alpha \in \Phi, i=1, \ldots, \dim( \goth{g}_\alpha)\}$$ 
of simultaneous eigenvectors for the action of $\Ad(a_1)$ on
$\goth{g}$. Since $\Ad^j(g_1) = \Ad^j(k_1n_1) \circ \Ad^j(a_1)$ and 
$\Ad^j(k_1n_1)$ is a quasi-unipotent map, whose norm grows at most polynomially in $j$, for any
sufficiently small $\delta_3>0$ and for any sufficiently large $j$, 
$\|\Ad^j(g_1) \vv \|$ is bounded above by 
$$ 
\max
\left( \left\{e^{j(\alpha(a_1)-\delta_3)}|c^{(\alpha)}_i| : \alpha \in \Phi^+
\right\} \cup \left \{e^{j\delta_3} 
|c^{(\alpha)}_i| : \alpha \in \Phi \sm \Phi^+ 
\right\}
\right),
$$
where 
$$
\vv = \sum_{\alpha \in \Phi} \sum_{i=1}^{\dim (\goth{g}_\alpha)}
c^{(\alpha)}_i \vv^{(\alpha)}_i.
$$
Therefore $C(n, \eta) \subset D(n,\eta)$, where the latter is defined
as
$$
\left\{\vv : \forall \alpha \in \Phi^+, \, 
|c^{(\alpha)}_i| \leq e^{n(\delta_3 -\alpha(a_1))} \eta
\mathrm{\ and \
} 
\forall \alpha \in \Phi \sm \Phi^+, \, 
|c^{(\alpha)}_i| \leq e^{n\delta_3} \eta
 \right\}.
$$
We cover $D(n, \eta)$ by cubes in $\goth{g}$ whose sidelength is the
smallest of these dimensions, namely $e^{n(\delta_3 -
\alpha_{\max})}\eta$, where 
$$\alpha_{\max} = \max\{\alpha(a_1): \alpha \in \Phi^+\}.$$
Then the number of these cubes is at most
$$
M'_n = \exp\left[ n\left(\alpha_{\max} \, \dim( G)  + 2\delta_3
\dim (\goth{g}^{\leq}) - \sum_{\alpha \in \Phi^{+}} \dim (\goth{g}_{\alpha} )
\, \alpha(a_1) \right) \right], 
$$
where $\goth{g}^{\leq} = \bigoplus_{\alpha(a) \leq 0} \goth{g}_{\alpha}.$
Using the images of these cubes under the exponential map, we find
that for some constant $c$, we have a
covering of $K$ by sets whose diameter is at most $ce^{n_k(\delta_3 -
\alpha_{\max})},$ and whose number is bounded by $ M_{n_k} M'_{n_k} $,
i.e.\ is not greater  than 
\[
\begin{split} 
 & \exp \left[
n_k\left( h_{\max} - \delta_2+ \alpha_{\max} \, \dim (G) +
2\delta_3 \dim (\goth{g}^{\leq}) - \sum_{\alpha \in
\Phi^{+}} \dim (\goth{g}_{\alpha}) \alpha(a_1) \right)
\right] \\ 
& \under{\equ{eq: formula for entropy}}{=}
c \exp \left[n_k \left(
-\delta_2+ \alpha_{\max} \,  \dim( G) +
2\delta_3 \dim( \goth{g}^{\leq})
\right)
\right].
\end{split}
\]
Thus we have proved that $$
N_K(ce^{n_k(\delta_3 -
\alpha_{\max})}) \leq c \exp \left(n_k \left(\alpha_{\max} \dim (G)
-\delta_2+ 2 \delta_3 \dim (\goth{g}^{\leq})
\right)
\right).
$$
This, in view of \equ{defboxdim}, implies
\[
\begin{split}
\dim (K) & \leq \frac{\log c + n_k\left(\alpha_{\max} \, \dim (G) - \delta_2+ 2
\delta_3 \dim (\goth{g}^{\leq})\right)}{-\log c+ n_k(\alpha_{\max} - \delta_3)
} \\
&  \to_{k \to \infty} \frac{\alpha_{\max} \dim (G) + 2\delta_3 \dim (\goth{g}^{\leq}) - 
\delta_2}{\alpha_{\max} - \delta_3 } \\
& \to_{\delta_3 \to 0} \dim( G)
-\delta_2/\alpha_{\max} < \dim (G)\,,
\end{split}
\]
contradicting the assumption of the lemma. \end{proof}

\subsection{Completion of the proof} \name{compl} Here we reduce Theorem \ref{cor: F}
to the set-up of  Theorem \ref{thm: C} and Proposition \ref{prop:
smalldim irr}. There are several additional steps left, as we will see
below. 

\begin{proof}[Proof of  Theorem \ref{cor: F}] Recall that we have taken $G$ to be an arbitrary Lie group
and $\Gamma$ a lattice in $G$. It is easy to see that $G$ can be assumed to be connected, since $F$
happens to be a subgroup of the connected component $G^0$ of identity, and connected components 
of $G/\Gamma$ are copies of $G^0/(G^0\cap \Gamma)$.
 
Now let  $R(G)$ be the radical of $G$.
Then $G/R(G) = G_0\times
\widehat G$, where $G_0$ is compact and $\widehat G$ is connected semisimple without
compact factors. 
Let $\pi:G\to\widehat G$ be the canonical projection, then (see \cite[Chapter 9]{Raghunathan} 
or \cite[Lemma 5.1]{Dani-inv})
 $\hat\Gamma\df 
\pi(\Gamma)$ is a lattice in $\hat G$. Also, recall that $F$ is assumed to be
absolutely non-quasiunipotent; clearly the same can be said about $\pi(F)$. 
If we denote by $\bar\pi$ the induced map of
\hs s $\ggm\to\widehat G/\widehat \Gamma$, then it is known (see the
second part of \cite[Lemma 5.1]{Dani-inv}) that $\bar \pi$ has compact fibers and
$ \Ker \pi\,\cap\,\Gamma\subset \Ker \pi$ is a
uniform lattice. 
Suppose we knew that the set
\eq{efzinftynew}{\{ x\in E\big(\pi(F),\bar \pi(Z)\big)  
\cap E\big(\pi(F),\infty\big):\dim(\overline{\pi(F)x})
 < \dim(\widehat G)\}
}
is thick.  We claim that the $\bar \pi$-preimage of the above set
is contained in the set \equ{efzinfty};  
 thus the latter is also thick. Indeed, if $\bar \pi(x)$ belongs to $ E\big(\pi(F),\infty\big)$,
then  $Fx$ is bounded by the compactness of the fibers of $\bar \pi$. Also, by the continuity
of $\bar \pi$,
if $z$ is in the closure of $Fx$, then $\bar \pi(z)$ must be in the closure of $\pi(F)\bar \pi(x) = 
\bar \pi(Fx)$. Finally, the orbit closure $\overline{F x}$ is contained in the preimage
of $\overline{\pi(F)\bar \pi(x)}$ and therefore has codimension not less than the codimension
of  $\overline{\pi(F)\bar \pi(x)}$, in view of Lemma
\ref{wegmann}. 

\smallskip

Thus we have reduced  Theorem \ref{thm: C} to the case when $G$  is connected semisimple without
compact factors. Next note that without loss of generality we can assume that 
the center $C(G)$ of $G$ is trivial. Indeed, let us denote the
quotient group $G/C(G)$ by $G'$, the homomorphism
$G\to G'$ by $p$,  and the induced map
$\ggm\to G'/p(\Gamma)$ by $\bar p$.
Since $\Gamma  C(G)$ is discrete \cite[Corollary 5.17]{Raghunathan},
$p(\Gamma)$ is also discrete, hence the quotient $C(G)/\big(\Gamma\cap C(G)\big)$ is finite.
This means that 
$(\ggm,\bar p)$ is a finite covering of $ G'/p(\Gamma)$, 
and an argument similar to that of  the previous step completes the reduction. 

\smallskip

Finally we are ready to reduce to the set-up of  Proposition \ref{prop: smalldim irr}.
 Let $G_1,\dots,G_\ell$ be connected normal subgroups of $G$ such
that $G=\prod_{i=1}^{l}G_i$, $G_i\cap G_j = \{e\}$ if $i\ne j$, 
$\Gamma_i = G_i\cap\Gamma$ is an irreducible lattice in $G_i$ for each
$i$,  and $\prod_{i=1}^\ell\Gamma_i$ has finite index in $\Gamma$. 
Denote by $p_i$ the projection $G\to G_i$; we know that for
any $i\in\{1,\dots,\ell\}$, the group $p_i(F)$ is either trivial or not quasiunipotent.
Also define maps
$$
\ggm \stackrel{\tilde p}{\to} \prod_{i=1}^\ell (G_i/\Gamma_i) = G/\prod_{i=1}^\ell p_i(\Gamma)
 \stackrel{\bar p_i}{\to} G_i/\Gamma_i\,,
$$
and let $\tilde p_i \df \tilde p\circ \bar p_i$.
 Applying  Proposition \ref{prop: smalldim irr}
to each of the spaces $G_i/\Gamma_i$, we conclude that for all $i$, the sets
$$A_i \df \{ x\in E\big(p_i(F),\tilde p_i(Z)\big)  
\cap E\big(p_i(F),\infty\big):\dim(\overline{p_i(F) x})
 < \dim(G_i)\}
$$
are thick. We now claim that the set \equ{efzinfty} contains $\tilde p^{-1}(A_1\times\dots\times A_\ell  )$.
 Indeed, if $Fx$ is unbounded, then so is $F\tilde p(x)$, and hence at least one
of its projections onto  $G_i/\Gamma_i$. Likewise,
if $z$ is in the closure of $Fx$, then clearly  its projection is in the closure
of the projection of $Fx$.  Finally, the orbit closure $\overline{F x}$ is contained in 
$\tilde p^{-1}\big(\prod_{i=1}^\ell   \overline{p(F)\tilde  p_i(x)}\big)$;
and as long as the codimension of at least one of the sets $\overline{p(F)\tilde  p_i(x)}$ is positive,
the same can be said about $\overline{F x}$, again by Lemma \ref{wegmann}. We conclude that
the set \equ{efzinfty} is thick, as claimed. \end{proof}

\section{Concluding remarks}\name{concl}
Let $m,n$ be positive integers,  and
let  $\vr\in\R^m$ and $\vs\in\R^n$ be as in
\equ{def rs}.
One says that  \amr\ (interpreted as a system of $m$ linear forms in $n$ variables) is  
 {\sl $(\vr,\vs)$-\ba\/}, denoted by $Y\in\BA(\vr,\vs)$, if
 \eq{def bars}{ \inf_{\vp\in\Z^m,\,\vq\in\Z^n\nz} \max_i |Y_i\vq - p_i|^{1/r_i} \cdot
\max_j|q_j|^{1/s_j}    > 0\,,
}
where $Y_i$, $i = 1,\dots,m$ are rows of $Y$ (linear forms $\vq\mapsto Y_i\vq$). The choice 
of constant weights $r_i \equiv 1/m$ and $s_j \equiv 1/n$ corresponds to the
classical  notion of {\sl \ba\/} systems.
It has been observed by Dani \cite{Dani-div}  
 that \amr\  is \ba\ if and only if $u_Y\Gamma \in E(F^+,\infty)$ where
$G$, $\Gamma$ and $F$ are as in  \equ{case1}, $k = m+n$, and $u_Y$ is as in \equ{def uy}.
This way he could use Schmidt's result on thickness of the set of \ba\ matrices
to construct bounded trajectories in this particular case.

Now, more generally,  consider $G,\Gamma$ as in   \equ{case1}, 
and let $F = F_{\vr,\vs}$ be as in 
\equ{new defn gt}.
 A rather straightforward generalization of Dani's result \cite{K-matrices} shows
 that
$Y\in\BA(\vr,\vs)$  if and only if   $u_Y\Gamma \in E(F_{\vr,\vs}
 ^+,\infty)$.
Thus Theorem \ref{thm: more general}, in view of Proposition \ref{prop: epimorphic},
implies

\begin{cor}\name{cor: E} $ \BA(\vr,\vs)$
is winning for the MSG induced by the semigroup of contractions
$\Phi_t : (y_{ij}) \mapsto ( e^{- (r_i + s_j)t }y_{ij})$ of $\mr$. 
 \end{cor}

The case $n=1$ of the above corollary is the main result of \cite{games}, which was proved via 
a variation of Schmidt's methods, not using homogeneous dynamics. The fact that the
set $ \BA(\vr,\vs)$ is thick was known before, see \cite{PV-bad} for
the case $n=1$ and \cite[Corollary 4.5]{di} for the general case. 
Note that a famous problem dating back to 
W.\ Schmidt \cite{Schmidt:open} is to determine whether or not for {\it different\/} pairs
$(\vr,\vs)$ and $( \vr',\vs')$, the intersection of $\BA(\vr,\vs)$ and $\BA(\vr',\vs')$ could be empty. Schmidt
conjectured that the intersection is  non-empty in the special case $n = 2$, $m = 1$, $\vr = (\frac13, \frac23)$
and $\vr' = (\frac23, \frac13)$. Recently this conjecture was proved in a much stronger form by
Badziahin, Pollington and Velani \cite{BPV}: they established that the intersection of countably many 
of those sets has \hd\ $2$, as long as the weights $(r_1,r_2)$ are bounded away from the endpoints
of the interval $\{r_1 + r_2 = 1\}$. This was done without using Schmidt games.
Unfortunately, the results of the 
present paper do not give rise to any further progress related to Schmidt's Conjecture,
since each pair $(\vr,\vs)$ defines a different MSG, and we are unable to show that 
winning sets of different games must have nonempty intersection. 


However, our technique can be applied to a similar problem of constructing points
whose orbits under different one-parameter semigroups $F_j^+$ stay away from a countable subset
of $\ggm$. Namely, the following can be established:

\begin{cor}\name{cor: S} Let $G$ be a Lie group, $\Gamma$ a
discrete subgroup of $G$, $Z$ a countable subset of $\ggm$, and  let 
$\{F_j\}$ be a countable collection
of one-parameter subgroups of $G$ such that there exists
 a one-dimensional
subgroup $H$ of $G$ which is contained in the expanding horospherical
subgroups corresponding to $F_j^+$ and is normalized by $F_j$ for each
$j$. 
Then  the set
$\bigcap_{j}\ E(F_j^+,Z)$ 
is thick.
\end{cor}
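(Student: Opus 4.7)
The plan is to show that for each $x \in \ggm$, the set
\[
T_x \df \big\{h \in H : hx \in \bigcap_j E(F_j^+, Z)\big\}
\]
is thick in $H$, and then upgrade to thickness in $\ggm$ via a slicing argument. The key point is that, because $\dim H = 1$, all the MSGs induced on $H$ by the various $\FF_j$'s collapse, after a single time rescaling, into one game.

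First I would uniformize the games. Since $H$ is a one-dimensional connected subgroup of the simply connected nilpotent group $H^+$ associated to any $F_j$, it is isomorphic to $\R$; fix such an isomorphism. Each contracting automorphism $\Phi_t^{(j)}(h) = g_{-t}^{(j)} h g_t^{(j)}$ then acts on $H \cong \R$ as $h \mapsto e^{-\lambda_j t} h$ for some $\lambda_j > 0$. Using the common admissible domain $D_0 = [-1, 1]$, $\cd_t^{(j)}$ consists of translates of intervals of length $2 e^{-\lambda_j t}$. So if $\tilde \FF$ denotes the reference semigroup $\tilde \Phi_s(h) = e^{-s} h$, then a set $S \subset H$ is $a$-winning for the $\FF_j$-MSG if and only if it is $(\lambda_j a)$-winning for the $\tilde \FF$-MSG.

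Next, I would apply Theorem \ref{thm: B more general}: for each $j$ and each $z \in Z$, the set $S_{j, z} \df \{h \in H : hx \in E(F_j^+, z)\}$ is $a$-winning for the $\FF_j$-MSG whenever $a > a_j''$, with $a_j''$ depending only on $D_0$ and $\FF_j$. Inspecting \equ{two choices}, $a_j''$ can be any value for which $D_0$ contains two translates of $\Phi_a^{(j)}(D_0)$ a fixed positive distance apart; with $D_0 = [-1, 1]$ one may take $a_j'' = (\log 4)/\lambda_j$, so that $\lambda_j a_j'' = \log 4$ is independent of $j$. Hence for any $\A_0 > \log 4$ every $S_{j, z}$ is $\A_0$-winning for the single $\tilde \FF$-MSG; Theorem \ref{thm: countable general}(a), applied to the countable family $\{S_{j, z}\}_{j;\, z \in Z}$, then shows that $T_x$ is $\A_0$-winning for $\tilde \FF$, and thus thick in $H$ by Theorem \ref{thm: countable general}(b).

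Finally, to upgrade to $\ggm$: given a nonempty open $U \subset \ggm$, choose a basepoint $y \in U$ and a vector space complement $\mathfrak{l}$ to $\Lie(H)$ in $\g$, so that the map $(\xi, h) \mapsto \exp(\xi) h$ from $\mathfrak{l} \times H$ to $G$ is a local bi-Lipschitz homeomorphism at $(0, e)$. For sufficiently small neighborhoods $L_0 \subset \mathfrak l$ of $0$ and $H_0 \subset H$ of $e$, this identifies a piece of $U$ with $L_0 \times H_0$. Applying the previous step with $x = \exp(\xi) y$ for each $\xi \in L_0$ shows that the $H$-slice $T_{\exp(\xi) y}$ is thick in $H$; Marstrand slicing (Lemma \ref{marstrand}) then yields $\dim\big(\bigcap_j E(F_j^+, Z) \cap U\big) \geq \dim \mathfrak l + 1 = \dim G$, proving thickness. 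The main obstacle is the uniform bound $\sup_j \lambda_j a_j'' < \infty$: one-dimensionality of $H$ is exactly what allows a common $D_0$ to serve every game and forces the scaling $a_j'' \asymp 1/\lambda_j$; in higher-dimensional $H$, the $\Phi_t^{(j)}$ could have incommensurate spectra and no single MSG would control all the $S_{j, z}$ simultaneously.
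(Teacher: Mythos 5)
Your approach is essentially the paper's: apply Theorem \ref{thm: B more general} to each pair $(j,z)$, use the one-dimensionality of $H$ to put all the induced games on a common footing, intersect via Theorem \ref{thm: countable general}(a), and then slice via Lemma \ref{marstrand}. You correctly identify the key mechanism --- on a one-dimensional $H$ each $\Phi_t^{(j)}$ is scaling by $e^{-\lambda_j t}$, so a common rescaling collapses all the games into one --- and you make explicit the uniform bound $\lambda_j a_j'' = \log 4$ that the paper leaves implicit when it says ``one can choose an interval $D_0$ and $a''>0$ such that \equ{two choices} is satisfied.''

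There is, however, an order error in the slicing step that would make the argument fail as written. You parameterize a neighborhood of $y$ by $(\xi,h)\mapsto \exp(\xi)\,h\,y$, so the slice over a fixed $\xi$ is $\{h\in H_0 : \exp(\xi)\,h\,y\in\bigcap_j E(F_j^+,Z)\}$. This is \emph{not} the set $T_{\exp(\xi)y}=\{h : h\exp(\xi)y\in\bigcap_j E(F_j^+,Z)\}$ that Theorem \ref{thm: B more general} controls; since $\mathfrak{l}$ need not normalize $H$, the points $\exp(\xi)\,h\,y$ and $h\exp(\xi)y$ are not related in any useful way, and there is no direct reason the former slice should be thick. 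The fix is immediate: use instead the parameterization $(\xi,h)\mapsto h\exp(\xi)\,y$, whose differential at $(0,e)$ is still the linear isomorphism $\mathfrak{l}\oplus\Lie(H)\to\goth{g}$, so it remains a local bi-Lipschitz chart. Under this chart the slice over $\xi$ is exactly $T_{\exp(\xi)y}\cap H_0$, which by the preceding step has full dimension $1$, and Lemma \ref{marstrand} then delivers the conclusion exactly as you intend.
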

 
 \begin{proof} By Theorem \ref{thm: B more general}, for any $x\in\ggm$, each of the sets
\eq{efzj}{\{h\in H : hx\in E(F_j^+,Z)\}} is a winning set for the modified Schmidt game induced by the 
 semigroup of contracting automorphisms
 of $H$  given by the conjugation by elements of $F_j^+$. However, since $H$ is one-dimensional,
 one can reparameterize each of the groups $F_j$ so that they all give rise to the same semigroup 
 $\FF = \{\Phi_t|_H\}$ and induce the same modified game (isomorphic to the original Schmidt's game). 
 Then one can choose an interval $D_0$ and $a'' > 0$ such that  \equ{two choices}
 is satisfied, and,  as in the proof of Theorem \ref{thm: B more general},  conclude that each of the sets \equ{efzj} is 
$a$-winning for any $a > a''$; thus the claim follows from Theorem \ref{thm: countable general} and Lemma \ref{marstrand}. \end{proof}

An example of a situation described in the above corollary  is furnished
by the action of  groups $F = F_{\vr,\vs}$ as in 
\equ{new defn gt} on homogeneous spaces $\ggm$ of $G = \SL_{m+n}(\R)$, where 
$\Gamma$ is any discrete subgroup of $G$ and $(\vr,\vs)$ is as in
\equ{def rs}. It is easy to see that the group $$\{u_y \df I_{m+n} + yE_{1,m+n} : y\in \R\}\,,$$
where $E_{1,m+n}$  stands for 
the matrix with $1$ in the upper-right corner and $0$ elsewhere,
satisfies the assumptions of the corollary. Thus for any countable set of pairs
$(\vr_j,\vs_j)$, any countable subset  $Z$ of $\ggm$ and any $x\in\ggm$, 
the set $\bigcap_{j}\{y\in\R:  u_yx\in E(F_{\vr_j,\vs_j}^+,Z)$ is winning.


\end{document}

.,
and say that \amr\ is  
 {\sl \ba\/} if
$$ \inf_{\p\in\Z^m,\,\vq\in\Z^n\nz} \|Y \vq - \vp\|^m  
\|\vq\|^{n}    > 0\,.
$$
Dani \cite{Dani-div}  
showed that \amr\  is \ba\ if and only if $u_Y\Gamma \in E(F^+,\infty)$ where
$G$, $\Gamma$ and $F$ are as in case ($*$) with $k = m+n$, and (here $I_{\ell}$ stands for the $\ell\times \ell$ identity matrix).
This way he could use Schmidt's result on thickness of the set of \ba\ matrices
to construct bounded trajectories in this particular case.

Now, more generally,  consider $G,\Gamma$ as in  ($*$)
and let $F = 
 \{g^{(\vr,\vs)}_t\}$ where
\eq{new defn gt}{
g^{(\vr,\vs)}_t =  \diag(e^{r_1t}, \ldots,
e^{r_{m} t}, e^{-s_1t}, \ldots,
e^{-s_{n}t} )\,,
}
with $\vr\in\R^m$ and $\vs\in\R^n$ as in 
\equ{def rs}.
 Generalizing Dani's result, one can show \cite{K-matrices}
 that, for $F$ as above,  $u_Y\Gamma \in E(F
 ^+,\infty)$ if and only if
$Y\in\BA(\vr,\vs)$, as defined in \equ{def bars}.  
Note that the group 
\eq{group di}{\{u_Y : Y \in M_{m,n}\}} coincides with $H^+
$ when 
\eq{uniform}{
\vr = \left( \tfrac1m,\dots,\tfrac1m \right)\text{ and }
\vs =  \left( \tfrac1n,\dots,\tfrac1n \right)\,,
}
which corresponds to the case considered by Dani, 
and is a proper subgroup of 
$H^+
$ when some components of either $\vr$ or $\vs$ are different.
Nevertheless, the following holds: